\newtheorem{thrm}{Theorem}[section]
\newtheorem{cor}[thrm]{Corollary}
\newtheorem{lem}[thrm]{Lemma}
\newtheorem{prop}[thrm]{Proposition}
\theoremstyle{definition}
\newtheorem{defn}[thrm]{Definition}
\newtheorem{exm}[thrm]{Example}
\newtheorem{rem}[thrm]{Remark}
\crefname{thrm}{Theorem}{Theorems}
\crefname{lem}{Lemma}{Lemmas}
\crefname{cor}{Corollary}{Corollaries}
\crefname{prop}{Proposition}{Propositions}
\crefname{defn}{Definition}{Definitions}
\crefname{exm}{Example}{Examples}
\crefname{rem}{Remark}{Remarks}
\crefname{section}{Section}{Sections}
\crefname{equation}{\unskip}{\unskip}
\crefname{enumi}{\unskip}{\unskip}
\newcommand{\pA}{{\sf pA}}
\newcommand{\A}{{\sf A}}
\newcommand{\R}[1]{{\sf R}{(#1)}}
\newcommand{\sem}{{\sf Sem}}
\newcommand{\grp}{{\sf Grp}}
\newcommand{\alg}{{\sf Alg}}
\newcommand{\V}{{\sf V}}
\newcommand{\pAl}[1]%
{%
 {\sf pAl}{(#1)}%
}
\newcommand{\Al}[1]%
{%
 {\sf Al}{(#1)}%
}
\newcommand{\mor}[2]%
{%
 \mathrm{Mor}(#1,#2)%
}
\newcommand{\id}{\mathrm{id}}
\newcommand{\cD}{\mathcal D}
\newcommand{\cA}{\mathcal A}
\newcommand{\cB}{\mathcal B}
\newcommand{\cR}{\mathcal R}
\newcommand{\cW}{\mathcal W}
\newcommand{\cS}{\mathcal S}
\newcommand{\cT}{\mathcal T}
\newcommand{\cH}{\mathcal H}
\newcommand{\U}{\mathbf U}
\newcommand{\bt}{\bm\theta}
\newcommand{\bth}{\bm\theta}
\newcommand{\bA}{\mathbf A}
\newcommand{\dom}[1]%
{%
 \operatorname{\mathrm{dom}}{#1}%
}
\newcommand{\ran}[1]%
{%
 \operatorname{\mathrm{ran}}{#1}%
}
\newcommand{\impl}{\Rightarrow}
\newcommand{\m}{^{-1}}
\newcommand{\s}{\sigma}
\newcommand{\0}{\theta}
\newcommand{\vt}{\vartheta}
\newcommand{\g}{\gamma}
\newcommand{\vf}{\varphi}
\newcommand{\ve}{\varepsilon}
\newcommand{\Th}{\Theta}
\newcommand{\vTh}{\varTheta}
\newcommand{\Sg}{\Sigma}
\newcommand{\vS}{\varSigma}
\newcommand{\emp}{\varnothing}
\renewcommand{\iff}{\Leftrightarrow}
\newcommand{\nat}{\natural}
\newcommand{\ol}{\overline}
\newcommand{\wt}{\widetilde}
\newcommand{\la}{\langle}
\newcommand{\ra}{\rangle}
\begin{document}

\title{Reflectors and globalizations of partial actions of groups}

\author[Mykola Khrypchenko]{Mykola Khrypchenko\textsuperscript{*}}
\address{Departamento de Matem\'atica, Universidade Federal de Santa Catarina, Campus Reitor Jo\~ao David Ferreira Lima, Florian\'opolis, SC, CEP: 88040--900, Brazil}
\email{nskhripchenko@gmail.com}

\thanks{\textsuperscript{*}Partially supported by FAPESP of Brazil (process: 2012/01554--7)}

\author[Boris Novikov]{Boris Novikov\textsuperscript{\textdagger}}
\address{Department of Mechanics and Mathematics, V. N. Karazin Kharkiv National University, Svobody sq. 4, Kharkiv, 61077, Ukraine}
\email{boris.v.novikov@univer.kharkov.ua}

\thanks{\textsuperscript{\textdagger}Deceased 30 March 2014}

\subjclass[2010]{Primary 18A40, 08C05; Secondary 08A02, 08A55, 08B25, 16W22.}
\keywords{Partial action, reflector, globalization}

\begin{abstract}
 Given a partial action $\0$ of a group on a set with an algebraic structure, we construct a reflector of $\0$ in the corresponding subcategory of global actions and study the question when this reflector is a globalization. In particular, if $\0$ is a partial action on an algebra from a variety $\V$, then we show that the problem reduces to the embeddability of certain generalized amalgam of $\V$-algebras associated with $\0$. As an application, we describe globalizable partial actions on semigroups, whose domains are ideals.
\end{abstract}

\maketitle

\section*{Introduction}\label{intro}

Recall from \cite[Definition 1.2]{E1} that a partial action of a group $G$ on a set $A$ is a collection of bijections $\0=\{\0_x:D_{x\m}\to D_x\}_{x\in G}$, where $D_x\subseteq A$ and
\begin{enumerate}
 \item $D_1=A$ with $\0_1=\id_A$;
 \item $\0_x(D_{x\m}\cap D_y)=D_x\cap  D_{xy}$;
 \item $\0_x\circ\0_y=\0_{xy}$ on $D_{y\m}\cap D_{y\m x\m}$.
\end{enumerate}
The subsets $D_x$ are called the {\it domains} of $\0$. When $A$ possesses an extra structure, one naturally assumes that $\0$ respects this structure in some sense. For example, in the algebra~\cite{DE}, ring~\cite{DRS} or semigroup~\cite{DN} setting the domains $D_x$ are supposed to be ideals of $A$, and the partial bijections $\0_x$ are isomorphisms. An alternative (equivalent) definition of a partial action, in terms of a partially defined map $\0$ from $G\times A$ to $A$, can be found in~\cite{KL} (see \cref{part-act-defn}).

A natural way to construct a partial action $\0$ of a group $G$ on a set $A$ is to restrict a global action of $G$ on some bigger set to $A$. It was proved by F.~Abadie in~\cite{Abadie03}, in the context of partial actions on topological spaces, and independently by J.~Kellendonk and M.~V.~Lawson in~\cite{KL} that, up to an isomorphism, each partial action $\0$ of $G$ on $A$ can be obtained this way. The corresponding global action is called an enveloping action of $\0$ in~\cite{Abadie03} and a globalization of $\0$ in~\cite{KL}. We shall follow the terminology of~\cite{KL}. Both~\cite{Abadie03} and~\cite{KL} give the same explicit construction of a globalization of $\0$, which we denote by $\0^U$. Moreover, it is proved in~\cite[Theorem 3.4]{KL} that $\0^U$ is universal among all the globalizations of $\0$, while~\cite[Theorem 1.1]{Abadie03} contains a stronger result saying that $\0^U$ is a reflector of $\0$ in the subcategory of global actions.

For a partial action of a group on an associative algebra $\cA$ the question of existence of a globalization was first considered in~\cite{DE}. If $\cA$ is unital, then it was proved in~\cite[Theorem 4.5]{DE} that $\0$ admits a globalization if and only if each ideal $D_x$ is unital. This criterion was generalized to so-called left $s$-unital rings in~\cite[Theorem 3.1]{DRS}. It was also used in~\cite{Ferrero06} to prove that a partial action on a semiprime ring is globalizable.

In this paper we study the globalization problem in the universal algebra setting. The idea is to construct a reflector of a partial action in the corresponding subcategory of global actions. Then the problem reduces to the verification whether the reflector is a globalization, and whenever it is, such a globalization is automatically universal.

The article is organized as follows. In \cref{pA} we fix the notations and recall well-known facts about partial actions of groups on sets and their globalizations. 

In \cref{pA(GR(T))} we introduce the notion of a partial action of a group on a relational system and show that it admits a universal globalization which is a reflector (see \cref{rho^U-glob}). 

We proceed to partial actions on (in general, partial) algebras in \cref{pA(GpAl(T))}. The domains of such partial actions are assumed to be relative subalgebras. \cref{ex-glob-pA(GpAl(T))<=>R_A^U-funct} gives a necessary and sufficient condition for the existence of a globalization of a partial action on a partial algebra, which holds, in particular, when the domains are subalgebras (see \cref{D_x-subalg}). In the case of partial actions on total algebras we begin with the construction of a reflector in the subcategory of global actions, which uses the concept of the algebra absolutely freely generated by a partial algebra (see \cref{refl-A(GAl(T))-pA(GAl(T)),refl-A(GAl(T))-pA(GAl(T))-precise}), and then show that the reflector is a globalization in \cref{glob-A(GAl(T))}. Restricting ourselves to partial actions on algebras from some fixed variety, we can still construct a reflector (see \cref{A(GV)-refl-in-pA(GV),refl-A(GV),refl-A(GV)-equiv-form}), which may not be 
a globalization in general. A characterization of globalizable partial actions is given in \cref{glob-pA(GV)}.

It turns out that the globalization problem is closely related to the embeddability problem for generalized amalgams, which was investigated, in the group case, by B.~H.~Neumann and H.~Neumann in~\cite{HNeumann48,HNeumann49,HNeumann50,HNeumann51,BNeumann-HNeumann53} (see also the survey~\cite{BNeumann54}). More precisely, with any partial action $\0$ of a group on an algebra from a variety $\V$ we associate a generalized amalgam $\bA$ of $\V$-algebras, such that $\0$ is a globalizable if and only if $\bA$ is embeddable (see \cref{refl-glob<=>amalg-emb}). 

In \cref{D_x-ideal} we use \cref{glob-pA(GV)} and the technique of reduction systems to obtain a criterion of the existence of a globalization of a partial action $\0$ on a semigroup, whose domains are ideals (see \cref{glob-A(GSem)-D_x-ideal}). In particular, when the domains of $\0$ are idempotent or weakly-reductive semigroups, the partial action is globalizable (see \cref{D_x-idemp-or-weakly-red}). If, moreover, the domains of $\0$ are unital ideals, then we show in \cref{D_x-unital} that $\0$ is isomorphic to the restriction of a global action to some ideal.

%
\section{Preliminaries}\label{pA}

Let $A$ and $B$ be sets. We use the notation $f:A\dashrightarrow B$ for a {\it partial map} from $A$ to $B$. If $f(a)$ is not defined, then we write $f(a)=\emp$.

It will be convenient to us to use the following equivalent definition of a partial action, which is due to~\cite{KL}.
\begin{defn}\label{part-act-defn}
Let $G$ be a group and $A$ a set. A {\it partial action of $G$ on $A$} is a partial map $\0:G\times A \dashrightarrow A$ satisfying 
\begin{enumerate}
 \item $\emp\ne\0(1,a)=a$;\label{1a=a}
 \item $\0(x,a)\ne\emp\ \impl\ \emp\ne\0(x\m,\0(x,a))=a$;\label{x-inv-xa=a}
 \item $\0(y,a)\ne\emp\ \&\ \0(x,\0(y,a))\ne\emp\ \impl\ \emp\ne\0(xy,a)=\0(x,\0(y,a))$.\label{(xy)a=x(ya)}
\end{enumerate}
If $\0$ is everywhere defined, then we say that $\0$ is {\it global}.
\end{defn}
This definition is equivalent to~\cite[Definition 1.2]{E1} mentioned in the introduction with $D_x=\ran{\0(x,-)}$ and $\0_x=\0(x,-)$.

A set $A$ with a partial action $\theta$ on it will be denoted by $(\0,A)$. The pairs $(\0,A)$ form a category $\pA(G)$, in which a morphism from $(\0,A)$ to $(\0',A')$ is a map $\vf:A\to A'$, such that
\begin{align}\label{morph-in-pA}
 \0(x,a)\ne\emp\ \impl\ \emp\ne\0'(x,\vf(a))=\vf(\0(x,a)).
\end{align}
The full subcategory consisting of the pairs $(\0,A)$ with global $\0$ will be denoted by $\A(G)$.

In what follows, when there is no confusion, we shall write $xa$ for $\0(x,a)$. For example, \cref{morph-in-pA} can be rewritten as
\begin{align}\label{morph-in-pA-new}
 xa\ne\emp\ \impl\ \emp\ne x\vf(a)=\vf(xa).
\end{align}

Partial actions appear in the following situation. Suppose that we are given a global action $\vt$ of $G$ on $B$ and $A\subseteq B$. Denote by $\0$ the {\it restriction} of $\vt$ to $A$, i.\,e. the partial map $G\times A\dashrightarrow A$, such that $\0(x,a)=b\iff\vt(x,a)=b\in A$ for any $a\in A$. Then $\0$ is a partial action of $G$ on $A$.

The converse construction is called a globalization.
\begin{defn}\label{glob-in-pA-defn}
A {\it globalization}~\cite{Abadie03,KL} of a pair $(\0,A)\in\pA(G)$ is a pair $(\vt,B)\in\A(G)$ with an injective morphism $\iota:(\0,A)\to(\vt,B)$ such that for all $x\in G$ and $a\in A$
\begin{align}\label{xa-ne-emp-iff-xiota(a)-in-iota(M)}
 xa\ne\emp\ \iff\ x\iota(a)\in\iota(A).
\end{align}
\end{defn}
Observe that it is sufficient to require the ``only if'' part of \cref{xa-ne-emp-iff-xiota(a)-in-iota(M)}, since the ``if'' part follows from \cref{morph-in-pA-new}. The morphism $\iota$ itself will also be called a globalization.

\begin{defn}\label{univ-glob-in-pA-defn}
A globalization $\iota:(\0,A)\to(\vt,B)$ is called {\it universal}, if for any globalization  $\iota':(\0,A)\to(\vt',B')$ there exists a unique morphism $\kappa:(\vt,B)\to(\vt',B')$ such that $\iota'=\kappa\circ\iota$.
\end{defn}

For any pair $(\0,A)$ there exists a (unique up to an isomorphism) universal globalization $(\0^U,A^U)$, which can be constructed as follows. Define the equivalence relation $\sim$ on $G\times A$:
$$
 (x,a)\sim(y,b)\ \iff\ \emp\ne(y\m x)a=b
$$
and set $A^U=(G\times A)/{\sim}$. Let $[x,a]$ be the $\sim$-class containing $(x,a)$. The action $\0^U$ of $G$ on $A^U$ has the form $x[y,a]=[xy,a]$, and the map $[1,-]$ sending $a\in A$ to $[1,a]\in A^U$ is the desired injection. Moreover, note that for any globalization $\iota:(\0,A)\to(\vt,B)$ the corresponding morphism $\kappa:(\0^U,A^U)\to(\vt,B)$ is injective (see~\cite[Theorem 1.1]{Abadie03} and~\cite[Theorem 3.4]{KL}).


Recall the definition of a reflector (see~\cite[IV.3]{Maclane-Cat} or~\cite[IV.4]{Tsalenko-Shulgeyfer}).

\begin{defn}\label{refl-defn}
A subcategory ${\sf D}$ of a category ${\sf C}$ is called {\it reflective}, if for any object $C\in{\sf C}$ there exists an object $R_{\sf D}(C)\in{\sf D}$ (called a ${\sf D}$-{\it reflector} of $C$) and a morphism $\ve_{\sf D}(C): C\to R_{\sf D}(C)$ (a {\it reflector morphism}), such that for all $D\in{\sf D}$ and $\vf:C\to D$ there is a unique $\psi:R_{\sf D}(C)\to D$ in ${\sf D}$ with $\psi\circ\ve_{\sf D}(C)=\vf$.
\end{defn}

\begin{rem}\label{refl-functor}
The map $C\mapsto R_{\sf D}(C)$ is a functor ${\sf C}\to{\sf D}$, which is left adjoint to the inclusion functor ${\sf D}\to{\sf C}$. In particular, any two $\sf D$-reflectors of $C$ are naturally isomorphic.
\end{rem}

The following fact is due to F.~Abadie (see~\cite[Theorem 1.1]{Abadie03}).
\begin{prop}\label{(0^UA^U)-refl}
The subcategory $\A(G)$ is reflective in $\pA(G)$. More precisely, $(\0^U,A^U)$ is an $\A(G)$-reflector of $(\0,A)\in\pA(G)$.
\end{prop}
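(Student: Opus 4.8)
The claim is that $(\0^U, A^U)$, with the reflector morphism $\ve = [1,-]\colon(\0,A)\to(\0^U,A^U)$, is an $\A(G)$-reflector. Let me look at what I need to verify and plan the proof.

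The universal property states: for any global action $(\vt, B) \in \A(G)$ and any morphism $\vf\colon (\0,A) \to (\vt, B)$, there's a unique morphism $\psi\colon (\0^U, A^U) \to (\vt, B)$ in $\A(G)$ with $\psi \circ [1,-] = \vf$.

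The key insight is that in a global action, every element $[x,a] = x[1,a]$, so if $\psi$ exists it must satisfy $\psi([x,a]) = \psi(x[1,a]) = x\psi([1,a]) = x\vf(a) = \vt(x, \vf(a))$. This forces uniqueness and tells me how to define $\psi$.

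Let me verify this is well-defined and a morphism.

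Let me organize this properly for the proof.
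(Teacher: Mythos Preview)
The paper does not give its own proof of this proposition; it simply attributes the result to Abadie~\cite[Theorem~1.1]{Abadie03}. Your outline is correct and is exactly the standard argument one would expect (and essentially what Abadie does): force $\psi([x,a])=\vt(x,\vf(a))$, then check well-definedness, equivariance, and uniqueness.

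Since you stopped short of actually carrying out the verifications, let me note the only point that requires care: well-definedness. If $[x,a]=[y,b]$, i.e.\ $\emp\ne(y\m x)a=b$, then because $\vf$ is a morphism of partial actions and $\vt$ is global, \cref{morph-in-pA-new} gives $\vt(y\m x,\vf(a))=\vf((y\m x)a)=\vf(b)$, and hence $\vt(x,\vf(a))=\vt(y,\vf(b))$. The remaining checks (that $\psi$ respects the $G$-action, that $\psi\circ[1,-]=\vf$, and uniqueness) are immediate from the definition of $\0^U$, exactly as you indicated. You should also record that $[1,-]$ is itself a morphism $(\0,A)\to(\0^U,A^U)$: if $xa\ne\emp$ then $\0^U(x,[1,a])=[x,a]=[1,xa]$.
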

%

\section{Partial actions on relational systems}\label{pA(GR(T))}

The basic notions of model theory that we use in this section can be found in~\cite[\S 36]{Gratzer}.

A {\it type of relational systems} is a sequence $T=\{n_\g\}_{\g<o(T)}$ of positive integers indexed by ordinal numbers. For each $\g<o(T)$ we fix a symbol $\rho_\g$ of an $n_\g$-ary relation. A {\it relational system of type $T$} is a pair $\cA=(A,R)$, where $A$ is a set and $R=\{(\rho_\g)_\cA\}_{\g<o(T)}$, each $(\rho_\g)_\cA$ being an $n_\g$-ary relation on $A$. We shall write $\rho_\g$ for $(\rho_\g)_\cA$, if this does not lead to confusion. 

Relational systems of the same type $T$ form a category $\R T$. A {\it morphism} $\vf:\cA\to\cA'$ is a map $\vf:A\to A'$, such that $\vf(\rho_\g)\subseteq\rho_\g$, i.\,e.
$$
 (a_1,\dots,a_{n_\g})\in\rho_\g\ \impl\ (\vf(a_1),\dots,\vf(a_{n_\g}))\in\rho_\g.
$$
A system $\cA$ is a {\it subsystem} of $\cB$, if $A\subseteq B$ and $(\rho_\g)_\cA=(\rho_\g)_\cB\cap A^{n_\g}$.

Let $G$ be a group and $\cA$ a relational system. By an {\it action of $G$ on $\cA$} we mean an action of $G$ on $A$, which preserves each $\rho_\g\in R$ in the sense that $x\rho_\g\subseteq\rho_\g$, i.\,e.
$$
 (a_1,\dots,a_{n_\g})\in\rho_\g\impl(xa_1,\dots,xa_{n_\g})\in\rho_\g.
$$

\begin{rem}\label{restr-to-(MR)}
Let $\cB$ be a relational system and $\cA$ a subsystem of $\cB$. Then the restriction of an action of $G$ on $\cB$ to $A$ satisfies
\begin{align}\label{inv-rho_g}
 (a_1,\dots,a_{n_\g})\in\rho_\g\ \&\ xa_1,\dots,xa_{n_\g}\ne\emp\ \impl\ (xa_1,\dots,xa_{n_\g})\in\rho_\g.
\end{align}
\end{rem}

This motivates the following definition.
\begin{defn}\label{pa-on-(MR)-defn}
Let $G$ be a group and $\cA$ a relational system. A {\it partial action of $G$ on $\cA$} is a partial action of $G$ on $A$ satisfying \cref{inv-rho_g}.
\end{defn}

Denote by $\pA(G,\R T)$ the category whose objects are the pairs $(\0,\cA)$, where $\cA\in\R T$ and $\0$ is a partial action of $G$ on $\cA$. A morphism from $(\0,\cA)$ to $(\0',\cA')$ is a morphism $\cA\to\cA'$ of relational systems, which is at the same time a morphism of partial actions $(\0,A)\to(\0',A')$. The pairs $(\0,\cA)$ with global $\0$ form a subcategory $\A(G,\R T)$.


\begin{defn}\label{glob-pA(GR(T))-defn}
A {\it globalization of} $(\0,\cA)\in\pA(G,\R T)$ is $(\vt,\cB)\in\A(G,\R T)$ with an injective morphism $\iota:(\0,\cA)\to(\vt,\cB)$ such that
\begin{enumerate}
 \item $\iota:(\0,A)\to(\vt,B)$ is a globalization in $\pA(G)$,\label{iota-glob-in-pA}
 \item $(\iota(a_1),\dots,\iota(a_{n_\g}))\in\rho_\g\ \impl\ (a_1,\dots,a_{n_\g})\in\rho_\g$.\label{iota(A)-subsystem}
\end{enumerate}
\end{defn}

Observe that condition \cref{iota(A)-subsystem} in \cref{glob-pA(GR(T))-defn} says that $\iota(\cA)=(\iota(A),\{\iota(\rho_\g)\}_\g)$ is a subsystem of $\cB$.

\begin{thrm}\label{rho^U-glob}
Let $(\0,\cA)\in\pA(G,\R T)$. Define $\cA^U$ to be the relational system on $A^U$ with
$$
(\rho_\g)_{\cA^U}=\{([x,a_1],\dots,[x,a_{n_\g}])\mid(a_1,\dots,a_{n_\g})\in(\rho_\g)_\cA,\ x\in G\}.
$$
Then $(\0^U,\cA^U)$ is a universal globalization of $(\0,\cA)$ in $\A(G, \R T)$. Moreover, $(\0^U,\cA^U)$ is an $\A(G, \R T)$-reflector of $(\0,\cA)$.
\end{thrm}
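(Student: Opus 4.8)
The plan is to leverage the set-level result \cref{(0^UA^U)-refl}, adding at each stage only the verification that the relational structure is respected. First I would check that $\0^U$ is genuinely an action of $G$ on the relational system $\cA^U$, i.e. that it preserves each $(\rho_\g)_{\cA^U}$. This is immediate from the definitions: a tuple in $(\rho_\g)_{\cA^U}$ has the form $([x,a_1],\dots,[x,a_{n_\g}])$ with $(a_1,\dots,a_{n_\g})\in(\rho_\g)_\cA$, and applying $g\in G$ yields $([gx,a_1],\dots,[gx,a_{n_\g}])$, which is again of the required form since $gx\in G$. Note that the common first coordinate $x$ in the defining tuples will be what allows the invariance condition \cref{inv-rho_g} to be applied with a single group element below.

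Next I would show that $\iota=[1,-]$ realizes $(\0^U,\cA^U)$ as a globalization in the sense of \cref{glob-pA(GR(T))-defn}. That $\iota$ is a morphism of relational systems is trivial: if $(a_1,\dots,a_{n_\g})\in(\rho_\g)_\cA$, then taking $x=1$ shows $([1,a_1],\dots,[1,a_{n_\g}])\in(\rho_\g)_{\cA^U}$. Condition \cref{iota-glob-in-pA} holds because $\iota$ is already a globalization in $\pA(G)$ (and in particular injective) by the set-level theory. The essential point is the subsystem condition \cref{iota(A)-subsystem}. Suppose $([1,a_1],\dots,[1,a_{n_\g}])\in(\rho_\g)_{\cA^U}$. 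By definition there are $x\in G$ and $(b_1,\dots,b_{n_\g})\in(\rho_\g)_\cA$ with $[1,a_i]=[x,b_i]$ for all $i$; unwinding the relation $\sim$ gives $b_i=x\m a_i\ne\emp$ for each $i$, and then axiom \cref{x-inv-xa=a} of \cref{part-act-defn} yields $xb_i=x(x\m a_i)=a_i\ne\emp$. Since $(b_1,\dots,b_{n_\g})\in(\rho_\g)_\cA$ and $xb_1,\dots,xb_{n_\g}\ne\emp$, \cref{inv-rho_g} (applied with the group element $x$) gives $(a_1,\dots,a_{n_\g})=(xb_1,\dots,xb_{n_\g})\in(\rho_\g)_\cA$, as required. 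I expect this step to be the main obstacle, since it is precisely here that the hypothesis that $\0$ is a partial action on the \emph{relational system}, and not merely on the set $A$, enters.

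Finally I would establish the reflector property, which also yields universality among globalizations. Given any $(\0',\cB')\in\A(G,\R T)$ and a morphism $\vf:(\0,\cA)\to(\0',\cB')$, \cref{(0^UA^U)-refl} supplies a unique map $\psi:A^U\to B'$ in $\A(G)$ with $\psi\circ\iota=\vf$, given by $\psi([x,a])=x\vf(a)$. It remains only to check that $\psi$ preserves relations. For a tuple $([x,a_1],\dots,[x,a_{n_\g}])\in(\rho_\g)_{\cA^U}$ we have $(a_1,\dots,a_{n_\g})\in(\rho_\g)_\cA$, hence $(\vf(a_1),\dots,\vf(a_{n_\g}))\in(\rho_\g)_{\cB'}$ because $\vf$ is a morphism of relational systems; applying the global action $\0'$, which preserves $\rho_\g$, gives $(x\vf(a_1),\dots,x\vf(a_{n_\g}))\in(\rho_\g)_{\cB'}$, that is, $(\psi([x,a_1]),\dots,\psi([x,a_{n_\g}]))\in(\rho_\g)_{\cB'}$. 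Thus $\psi$ is a morphism in $\A(G,\R T)$, and its uniqueness is inherited from the set level. This proves that $(\0^U,\cA^U)$ is an $\A(G,\R T)$-reflector. Since $(\0^U,\cA^U)$ is itself a globalization by the previous paragraph, and every globalization $\iota':(\0,\cA)\to(\0',\cB')$ is in particular an object of $\A(G,\R T)$ equipped with a morphism from $(\0,\cA)$, the reflector property specializes to the universal property of \cref{univ-glob-in-pA-defn}, giving the unique $\kappa$ with $\iota'=\kappa\circ\iota$.
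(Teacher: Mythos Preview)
Your proposal is correct and follows essentially the same approach as the paper: verify the subsystem condition \cref{iota(A)-subsystem} via \cref{inv-rho_g}, then upgrade the set-level reflector $\psi$ from \cref{(0^UA^U)-refl} to a morphism of relational systems by checking $\psi(\rho_\g)\subseteq\rho_\g$. The only differences are cosmetic: you spell out explicitly that $\0^U$ preserves $(\rho_\g)_{\cA^U}$ and that $[1,-]$ is a morphism of relational systems (both omitted in the paper as obvious), and in verifying \cref{iota(A)-subsystem} you read the equivalence $[1,a_i]=[x,b_i]$ as $x\m a_i=b_i$ and then invoke \cref{x-inv-xa=a}, whereas the paper reads it directly as $xb_i=a_i$; these are equivalent formulations of the same step.
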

\begin{proof}
To prove that $(\0^U,\cA^U)$ is a globalization, it is sufficient to check \cref{iota(A)-subsystem} of \cref{glob-pA(GR(T))-defn}. Let $([1,a_1],\dots,[1,a_{n_\g}])\in\rho_\g$. Then there are $x\in G$ and $(a'_1,\dots,a'_{n_\g})\in\rho_\g$, such that $[1,a_i]=[x,a'_i]$, and hence $\emp\ne xa'_i=a_i$ for all $i$. Therefore, $(a_1,\dots,a_{n_\g})\in\rho_\g$ by \cref{inv-rho_g}.

The universality of $(\0^U,\cA^U)$ will follow from the reflectivity. Take $(\vt,\cB)\in\A(G,\R T)$ and $\vf:(\0,\cA)\to(\vt,\cB)$. According to \cref{(0^UA^U)-refl} there is a unique morphism $\psi:(\0^U,A^U)\to(\vt,B)$ satisfying $\vf=\psi\circ[1,-]$, and we only need to show that $\psi(\rho_\g)\subseteq\rho_\g$. Indeed, if $([x,a_1],\dots,[x,a_{n_\g}])\in\rho_\g$, then
$$
 \psi([x,a_1],\dots,[x,a_{n_\g}])=(x\vf(a_1),\dots,x\vf(a_{n_\g}))\in x\vf(\rho_\g)\subseteq x\rho_\g\subseteq\rho_\g.
$$
\end{proof}

\section{Partial actions on algebras}\label{pA(GpAl(T))}

The terminology that we use in this section is due to Gr\"{a}tzer~\cite{Gratzer}.

Let $A$ be a set and $n\ge 0$. A {\it partial $n$-ary operation on $A$} is a partial function $f: A^n\dashrightarrow A$, where $A^0$ is by definition a singleton $\{\emptyset\}$. If $n=0$, then $f$ will be identified with $f(\emptyset)$ (which may be undefined). We shall frequently use the expressions of the form $f(a_1,\dots,a_n)$, which for $n=0$ should be understood as $f$.

A {\it type of algebras} is a sequence $T=\{n_\g\}_{\g<o(T)}$ of non-negative integers indexed by ordinal numbers. For each $\g<o(T)$ we fix a symbol $f_\g$ of an $n_\g$-ary operation. A {\it partial algebra of type $T$} is a pair $\cA=(A,F)$, where $A$ is a set and $F=\{(f_\g)_\cA\}_{\g<o(T)}$, each $(f_\g)_\cA$ being a partial $n_\g$-ary operation on $A$. We shall often omit the index $\cA$, writing $f_\g$ for $(f_\g)_\cA$, if $\cA$ is clear from the context. If $o(T)=1$, then we write $f$ for $f_0$ and $(A,f)$ for $(A,\{f\})$.

Let $\cA$ and $\cB$ be partial algebras of the same type. A map $\vf:A\to B$ is a {\it homomorphism} $\cA\to\cB$, if
$$
 f_\g(a_1,\dots,a_{n_\g})\ne\emp\impl\emp\ne f_\g(\vf(a_1),\dots,\vf(a_{n_\g}))=\vf(f_\g(a_1,\dots,a_{n_\g})).
$$
An {\it isomorphism} is a bijective homomorphism whose inverse is also a homomorphism.

An equivalence relation $\Th$ on $A$ is a {\it congruence} on $\cA$, if for all $(a_i,b_i)\in\Th$, $1\le i\le n_\g$, one has
\begin{align}\label{subst-prop}
 f_\g(a_1,\dots,a_{n_\g}),f_\g(b_1,\dots,b_{n_\g})\ne\emp\impl(f_\g(a_1,\dots,a_{n_\g}),f_\g(b_1,\dots,b_{n_\g}))\in\Th.
\end{align}
The $\Th$-class of $a\in A$ will be denoted by $[a]_\Th$. The quotient set $A/\Th$ has the natural structure of a partial algebra, which we denote by $\cA/\Th$. The map $a\mapsto[a]_\Th$ is an epimorphism $\cA\to\cA/\Th$, for which we use the notation $\Th^\nat$. Sometimes it is convenient to write $(a,b)\in\Th$ as $a\sim_\Th b$. Given a homomorphism $\vf:\cA\to\cB$, the {\it kernel} of $\vf$ is the congruence $\ker\vf=\{(a,b)\in A^2\mid\vf(a)=\vf(b)\}$ on $\cA$.

For partial algebras there are several notions of a subalgebra. A partial algebra $\cB$ is a {\it relative subalgebra} of $\cA$, if $B\subseteq A$ and the partial operations on $\cB$ are the restrictions of the partial operations on $\cA$ to $B$ in the sense that
$$
(f_\g)_\cB(b_1,\dots,b_{n_\g})\ne\emp\iff\emp\ne (f_\g)_\cA(b_1,\dots,b_{n_\g})\in B.
$$
If $B$ is closed under all $f_\g$, i.\,e. 
$$
(f_\g)_\cA(b_1,\dots,b_{n_\g})\ne\emp\impl (f_\g)_\cA(b_1,\dots,b_{n_\g})\in B,
$$
then a relative subalgebra $\cB$ is called a {\it subalgebra} of $\cA$. 

Observe that each partial operation $f:A^n\dashrightarrow A$ can obviously be identified with an $(n+1)$-ary relation
$$
 \rho=\{(a_1,\dots,a_n,f(a_1,\dots,a_n))\mid  f(a_1,\dots,a_n)\ne\emp\}\subseteq A^{n+1}.
$$
Conversely, if $\rho\subseteq A^{n+1}$ satisfies
$$
(a_1,\dots,a_n,b),(a_1,\dots,a_n,c)\in\rho\,\impl\,b=c,
$$
then $\rho$ comes from some (uniquely defined) $f:A^n\dashrightarrow A$. When $n=1$, this property of $\rho$ is called {\it functionality}. We shall use the same terminology for $n\ne 1$ as well. Moreover, we shall say that a relational system is {\it functional}, if all its relations are functional.

Denote by $\pAl T$ the category of partial algebras of type $T$ and homomorphisms between them. As it was observed above, each $\cA\in\pAl T$ can be seen as a functional relational system $\cR(\cA)$ of type $T+1$ on $A$, where $T+1:=\{n_\g+1\}_{\g<o(T)}$. This defines an embedding functor of $\pAl T$ into $\R{T+1}$.

\begin{defn}\label{pApO-defn}
 Given a group $G$ and $\cA\in\pAl T$, a {\it partial action of $G$ on $\cA$} is by definition a partial action of $G$ on $\cR(\cA)$.
\end{defn}

Taking into account \cref{inv-rho_g}, we see that a partial action of $G$ on $\cA$ is a partial action of $G$ on $A$ satisfying
\begin{align}\label{pA(GpAl(T))-cond}
 xa_1,\dots,xa_{n_\g},xf_\g(a_1,\dots,a_{n_\g})\ne\emp\impl\emp\ne f_\g(xa_1,\dots,xa_{n_\g})=xf_\g(a_1,\dots,a_{n_\g}).
\end{align}
Here $xf_\g(a_1,\dots,a_{n_\g})\ne\emp$ assumes that $f_\g(a_1,\dots,a_{n_\g})\ne\emp$. If $n_\g=0$, then \cref{pA(GpAl(T))-cond} should be understood as $xf_\g\ne\emp\impl xf_\g=f_\g$. 

Note that \cref{pA(GpAl(T))-cond} is equivalent to the fact that $\0_x$ is an isomorphism $\cD_{x\m}\to\cD_x$, where $\cD_x$ is the relative subalgebra on $D_x$.

The full subcategory of $\pA(G,\R{T+1})$ consisting of partial actions of $G$ on partial algebras of type $T$ will be denoted by $\pA(G,\pAl T)$. The corresponding subcategory of global actions is $\A(G,\pAl T)=\pA(G,\pAl T)\cap\A(G,\R{T+1})$. 
\begin{defn}\label{glob-A(GpAl(T))}
 By a {\it globalization} of $(\0,\cA)\in\pA(G,\pAl T)$ in $\A(G,\pAl T)$ we mean $(\vt,\cB)\in\A(G,\pAl T)$, such that $(\vt,\cR(\cB))$ is a globalization of $(\0,\cR(\cA))$ in $\A(G,\R{T+1})$. 
\end{defn}
 Observe that an injective morphism $\iota:(\0,\cA)\to(\vt,\cB)$ is a globalization if and only if $\iota(\cA)$ is a relative subalgebra of $\cB$ and \cref{xa-ne-emp-iff-xiota(a)-in-iota(M)} holds.

\begin{thrm}\label{ex-glob-pA(GpAl(T))<=>R_A^U-funct}
 There exists a globalization of $(\0,\cA)\in\pA(G,\pAl T)$ in $\A(G,\pAl T)$ if and only if $\cR(\cA)^U$ is functional. In this case $(\0^U,\cA^U)$ is a universal globalization of $(\0,\cA)$ in $\A(G,\pAl T)$, where $\cA^U$ is the partial algebra structure on $A^U$, such that $\cR(\cA^U)=\cR(\cA)^U$, i.\,e.
 $$
  (\rho_\g)_{\cA^U}([x,a_1],\dots,[x,a_{n_\g}])=[x,\rho_\g(a_1,\dots,a_{n_\g})].
 $$
 Moreover, $(\0^U,\cA^U)$ is an $\A(G,\pAl T)$-reflector of $(\0,\cA)$.
\end{thrm}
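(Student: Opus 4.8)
The plan is to reduce everything to \cref{rho^U-glob} through the embedding functor $\cR\colon\pAl T\to\R{T+1}$. A homomorphism of partial algebras is exactly a morphism of the associated relational systems: when the target is functional, the operation-preserving condition defining a homomorphism coincides with the inclusion $\vf(\rho_\g)\subseteq\rho_\g$. Hence $\cR$ is full and faithful, and it identifies $\pA(G,\pAl T)$ and $\A(G,\pAl T)$ with full subcategories of $\pA(G,\R{T+1})$ and $\A(G,\R{T+1})$. By \cref{rho^U-glob}, $(\0^U,\cR(\cA)^U)$ is a universal globalization and an $\A(G,\R{T+1})$-reflector of $(\0,\cR(\cA))$, with reflector morphism $\iota=[1,-]$; the whole statement will be obtained by transporting these facts along $\cR$, the one extra ingredient being the functionality criterion.

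For the equivalence, first suppose $\cR(\cA)^U$ is functional. Then it is of the form $\cR(\cA^U)$ for a unique partial algebra structure $\cA^U$ on $A^U$, whose operations are exactly the displayed formula $(\rho_\g)_{\cA^U}([x,a_1],\dots,[x,a_{n_\g}])=[x,\rho_\g(a_1,\dots,a_{n_\g})]$ — functionality being precisely what makes this well defined. Thus $(\0^U,\cA^U)\in\A(G,\pAl T)$, and by \cref{glob-A(GpAl(T))} it is a globalization of $(\0,\cA)$, since $(\0^U,\cR(\cA^U))=(\0^U,\cR(\cA)^U)$ is one in $\A(G,\R{T+1})$ by \cref{rho^U-glob}. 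Conversely, suppose a globalization $(\vt,\cB)\in\A(G,\pAl T)$ exists, with morphism $\iota'$. Then $(\vt,\cR(\cB))$ globalizes $(\0,\cR(\cA))$ in $\A(G,\R{T+1})$, so by the universality in \cref{rho^U-glob} there is a morphism $\kappa\colon(\0^U,\cR(\cA)^U)\to(\vt,\cR(\cB))$ with $\iota'=\kappa\circ\iota$, whose underlying map is injective (the universal morphism of a globalization is injective, as recalled in \cref{pA}). This injectivity together with the functionality of $\cR(\cB)$ forces $\cR(\cA)^U$ to be functional: if two tuples of $(\rho_\g)_{\cA^U}$ shared their first $n_\g$ entries but differed in the last, their $\kappa$-images would share the first $n_\g$ entries and both lie in $\rho_\g$ of $\cB$, so functionality of $\cR(\cB)$ would equate the last images, whence injectivity of $\kappa$ would equate the last entries — a contradiction.

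Finally, assuming functionality, the universality and the reflector property of $(\0^U,\cA^U)$ in $\A(G,\pAl T)$ follow by transporting the corresponding statements of \cref{rho^U-glob} back through the full and faithful $\cR$: a morphism $\vf\colon(\0,\cA)\to(\vt,\cB)$ into $(\vt,\cB)\in\A(G,\pAl T)$ is the same as a morphism $(\0,\cR(\cA))\to(\vt,\cR(\cB))$, reflectivity in $\A(G,\R{T+1})$ yields a unique $\psi\colon(\0^U,\cR(\cA)^U)\to(\vt,\cR(\cB))$ with $\vf=\psi\circ\iota$, and fullness returns $\psi$ as a homomorphism of partial algebras; the universality of the globalization then follows from reflectivity exactly as in the proof of \cref{rho^U-glob}. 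I expect the only genuinely new step to be the necessity direction, namely the observation that an injective morphism into a functional relational system is itself functional; the remainder is a routine transfer along $\cR$.
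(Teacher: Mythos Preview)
Your proposal is correct and follows essentially the same approach as the paper: both use the injective morphism $\kappa$ from \cref{rho^U-glob} into the functional system $\cR(\cB)$ to deduce functionality of $\cR(\cA)^U$, and then transfer the globalization/reflector properties back along the embedding $\cR$. Your write-up is more explicit than the paper's (which dispatches the ``if'' part and the second affirmation in one word each), particularly in spelling out that $\cR$ is full and faithful and in detailing the functionality argument, but there is no substantive difference in method.
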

\begin{proof}
 The ``if'' part is obvious. For the ``only if'' part suppose that $\iota:(\0,\cA)\to(\vt,\cB)$ is a globalization of $(\0,\cA)$ in $\A(G,\pAl T)$. Then by \cref{rho^U-glob} there is an injective $\kappa:(\0^U,\cR(\cA)^U)\to(\vt,\cR(\cB))$ in $\A(G,\R{T+1})$. Since $\kappa(\rho_\g)\subseteq\rho_\g$ and $\cR(\cB)$ is functional, it follows that $\cR(\cA)^U$ is also functional. The second affirmation of the theorem is immediate.
\end{proof}
 
 The following proposition clarifies \cref{ex-glob-pA(GpAl(T))<=>R_A^U-funct}.
\begin{prop}\label{R_A^U-funct-crit}
 Let $(\0,\cA)\in\pA(G,\pAl T)$. Then $(\0,\cA)$ is globalizable if and only if\footnote{The equality in \cref{glob-pA(GpAl(T))-cond} should be understood in a partial sense. If $n_\g=0$, then \cref{glob-pA(GpAl(T))-cond} takes the form $f_\g\ne\emp\impl f_\g=xf_\g$.}
 \begin{align}\label{glob-pA(GpAl(T))-cond}
  f_\g(a_1,\dots,a_{n_\g}),xa_1,\dots,xa_{n_\g}\ne\emp\impl f_\g(xa_1,\dots,xa_{n_\g})=xf_\g(a_1,\dots,a_{n_\g}).
 \end{align}
\end{prop}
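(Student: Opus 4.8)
The plan is to combine \cref{ex-glob-pA(GpAl(T))<=>R_A^U-funct} with a direct computation of what functionality of $\cR(\cA)^U$ amounts to at the level of $\cA$. By \cref{ex-glob-pA(GpAl(T))<=>R_A^U-funct}, $(\0,\cA)$ is globalizable if and only if every relation $(\rho_\g)_{\cA^U}$ is functional, so the entire task is to translate this functionality into condition \cref{glob-pA(GpAl(T))-cond}.

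First I would unwind functionality. By \cref{rho^U-glob}, a generic element of $(\rho_\g)_{\cA^U}$ has the form $([x,a_1],\dots,[x,a_{n_\g}],[x,f_\g(a_1,\dots,a_{n_\g})])$ with $f_\g(a_1,\dots,a_{n_\g})\ne\emp$ and $x\in G$. I would take two such elements, arising from $(x,a_1,\dots,a_{n_\g})$ and $(z,c_1,\dots,c_{n_\g})$, that agree in the first $n_\g$ coordinates, i.e. $[x,a_i]=[z,c_i]$ for all $i$. Setting $g=z\m x$, the definition of $\sim$ gives $\emp\ne ga_i=c_i$ for every $i$; renaming $a_i$ as $b_i$ and writing $\vec b=(b_1,\dots,b_{n_\g})$, I would record that functionality of $(\rho_\g)_{\cA^U}$ is exactly the statement
\begin{align*}
 \text{(\textasteriskcentered)}\quad gb_i\ne\emp\ (\forall i),\ f_\g(\vec b)\ne\emp,\ f_\g(g\vec b)\ne\emp\ \impl\ gf_\g(\vec b)=f_\g(g\vec b),
\end{align*}
the equality including definedness of $gf_\g(\vec b)$. (The nullary case $n_\g=0$ is handled the same way and reproduces precisely the convention in the footnote, so I would dispose of it in one line and assume $n_\g\ge 1$.)

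It then remains to show that (\textasteriskcentered) is equivalent to \cref{glob-pA(GpAl(T))-cond}. The implication from \cref{glob-pA(GpAl(T))-cond} to (\textasteriskcentered) is immediate: the hypotheses of (\textasteriskcentered) contain those of \cref{glob-pA(GpAl(T))-cond}, and the extra assumption $f_\g(g\vec b)\ne\emp$ upgrades the partial equality of \cref{glob-pA(GpAl(T))-cond} to the genuine equality required by (\textasteriskcentered). For the converse I would fix $\vec b$ with $f_\g(\vec b)\ne\emp$ and $gb_i\ne\emp$ for all $i$, and verify the partial equality of \cref{glob-pA(GpAl(T))-cond} by cases: if $f_\g(g\vec b)\ne\emp$, then (\textasteriskcentered) applies directly; if $f_\g(g\vec b)=\emp$, I would argue that $gf_\g(\vec b)=\emp$ as well, so both sides are undefined and the partial equality still holds.

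The main obstacle is exactly this last case, and it is where the ambient partial-action axiom \cref{pA(GpAl(T))-cond} must be used, since it is not formal from (\textasteriskcentered) alone. Suppose toward a contradiction that $f_\g(g\vec b)=\emp$ while $gf_\g(\vec b)\ne\emp$. As $gb_i\ne\emp$ for all $i$ and $gf_\g(\vec b)\ne\emp$, axiom \cref{pA(GpAl(T))-cond} (applied with $x=g$ and $a_i=b_i$) forces $\emp\ne f_\g(gb_1,\dots,gb_{n_\g})=f_\g(g\vec b)$, contradicting $f_\g(g\vec b)=\emp$. Hence $gf_\g(\vec b)=\emp$, which completes the case analysis and the equivalence. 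Functionality of all $(\rho_\g)_{\cA^U}$ is thus equivalent to \cref{glob-pA(GpAl(T))-cond}, and globalizability is equivalent to that functionality by \cref{ex-glob-pA(GpAl(T))<=>R_A^U-funct}, which finishes the proof.
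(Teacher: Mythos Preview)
Your proof is correct and follows essentially the same approach as the paper: both reduce to \cref{ex-glob-pA(GpAl(T))<=>R_A^U-funct} and then show that functionality of $\cR(\cA)^U$ is equivalent to \cref{glob-pA(GpAl(T))-cond}, using the axiom \cref{pA(GpAl(T))-cond} in exactly the place you use it. Your packaging of functionality as the intermediate condition (\textasteriskcentered) and your exhaustive case split on whether $f_\g(g\vec b)$ is defined make the argument slightly tidier than the paper's overlapping cases, but the mathematical content is the same.
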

\begin{proof}
 We show that \cref{glob-pA(GpAl(T))-cond} is equivalent to the functionality of $\cR(\cA)^U$. Assume that $\cR(\cA)^U$ is functional and take $a_1,\dots,a_{n_\g}\in A$ and $x\in G$ for which the left-hand side of \cref{glob-pA(GpAl(T))-cond} holds.
 If $xf_\g(a_1,\dots,a_{n_\g})\ne\emp$, then $\emp\ne f_\g(xa_1,\dots,xa_{n_\g})=xf_\g(a_1,\dots,a_{n_\g})$ by \cref{pA(GpAl(T))-cond}. Now suppose that $f_\g(xa_1,\dots,xa_{n_\g})\ne\emp$. Observe first that $f_\g(a_1,\dots,a_{n_\g})\ne\emp$ implies
 \begin{align*}
  (a_1,\dots,a_{n_\g},f_\g(a_1,\dots,a_{n_\g}))\in\rho_\g &\impl([1,a_1],\dots,[1,a_{n_\g}],[1,f_\g(a_1,\dots,a_{n_\g})])\in\rho_\g\\
  &\impl([x,a_1],\dots,[x,a_{n_\g}],[x,f_\g(a_1,\dots,a_{n_\g})])\in\rho_\g.
 \end{align*}
 Similarly $xa_1,\dots,xa_{n_\g},f_\g(xa_1,\dots,xa_{n_\g})\ne\emp$ yields
 $$
  ([1,xa_1],\dots,[1,xa_{n_\g}],[1,f_\g(xa_1,\dots,xa_{n_\g})])\in\rho_\g.
 $$
 But $[x,a_1]=[1,xa_1],\dots,[x,a_{n_\g}]=[1,xa_{n_\g}]$, hence 
 $$
 [x,f_\g(a_1,\dots,a_{n_\g})]=[1,f_\g(xa_1,\dots,xa_{n_\g})].
 $$
 The latter means that $\emp\ne xf_\g(a_1,\dots,a_{n_\g})=f_\g(xa_1,\dots,xa_{n_\g})$.
 
 Conversely, assume \cref{glob-pA(GpAl(T))-cond} and let
 \begin{align*}
  &([x,a_1],\dots,[x,a_{n_\g}],[x,f_\g(a_1,\dots,a_{n_\g})])\in\rho_\g,\\
  &([y,b_1],\dots,[y,b_{n_\g}],[y,f_\g(b_1,\dots,b_{n_\g})])\in\rho_\g
 \end{align*}
 with $[x,a_1]=[y,b_1],\dots,[x,a_{n_\g}]=[y,b_{n_\g}]$. We have 
 $$
 f_\g(a_1,\dots,a_{n_\g})\ne\emp,\ \ \emp\ne(y\m x)a_1=b_1,\dots,\emp\ne(y\m x)a_{n_\g}=b_{n_\g}
 $$
 and $f_\g((y\m x)a_1,\dots,(y\m x)a_{n_\g})=f_\g(b_1,\dots,b_{n_\g})\ne\emp$. So $(y\m x)f_\g(a_1,\dots,a_{n_\g})$ is defined  and thus equals $f_\g(b_1,\dots,b_{n_\g})$ by \cref{glob-pA(GpAl(T))-cond}. It follows that
 $$
 [x,f_\g(a_1,\dots,a_{n_\g})]=[y,f_\g(b_1,\dots,b_{n_\g})].
 $$
\end{proof}

\begin{rem}\label{D_x-subalg}
 Let $(\0,\cA)\in\pA(G,\pAl T)$, such that $\cD_x$ is a subalgebra of $\cA$ for all $x$. Then there is a globalization of $(\0,\cA)$ in $\A(G,\pAl T)$.
\end{rem}
\noindent Indeed, suppose that $f_\g(a_1,\dots,a_{n_\g}),xa_1,\dots,xa_{n_\g}\ne\emp$. The latter means that $a_1,\dots,a_{n_\g}\in D_{x\m}$ and $f_\g(a_1,\dots,a_{n_\g})\ne\emp$. Since $\cD_{x\m}$ is a subalgebra, then $f_\g(a_1,\dots,a_{n_\g})\in D_{x\m}$, i.\,e. $xf_\g(a_1,\dots,a_{n_\g})\ne\emp$, and hence \cref{pA(GpAl(T))-cond} yields $\emp\ne f_\g(xa_1,\dots,xa_{n_\g})=xf_\g(a_1,\dots,a_{n_\g})$. \\

The next example shows that the condition that $\cD_x$ is a subalgebra is not necessary in general for the existence of a globalization.

\begin{exm}\label{D_x-subalg-not-necess}
 Define $f:\mathbb Z\to\mathbb Z$ by $f(n)=n+1$. Then $\mathbb Z$ acts on $(\mathbb Z,f)$ by shifts. Consider the restriction of this action to the relative subalgebra on $\{1,2\}$. Since $D_{-1}=\{1\}$ and $f(1)=2\not\in D_{-1}$, then $\cD_{-1}$ is not a subalgebra. 
\end{exm}

\subsection{Total algebras}\label{pA(GAl(T))}

Denote by $\Al T\subseteq\pAl T$ the subcategory of total algebras of type $T$.
\begin{rem}\label{D_x-subalg-necess}
 Let $(\0,\cA)\in\pA(G,\pAl T)$, such that $\cA\in\Al T$. Then there exists a globalization of $(\0,\cA)$ in $\A(G,\pAl T)$ if and only if $\cD_x$ is a subalgebra of $\cA$ for all $x$.
\end{rem}
\noindent Indeed, in this situation \cref{glob-pA(GpAl(T))-cond} takes the form
\begin{align}\label{0_x-isom-between-subalg}
 xa_1,\dots,xa_{n_\g}\ne\emp\impl f_\g(xa_1,\dots,xa_{n_\g})=xf_\g(a_1,\dots,a_{n_\g})
\end{align}
(for $n_\g=0$ this simply means that $xf_\g=f_\g$). Since $\cA$ is total, $f_\g(xa_1,\dots,xa_{n_\g})\ne\emp$, provided that $xa_1,\dots,xa_{n_\g}\ne\emp$, so
$$
 xa_1,\dots,xa_{n_\g}\ne\emp\impl xf_\g(a_1,\dots,a_{n_\g})\ne\emp,
$$
i.\,e. $\cD_{x\m}$ is a subalgebra.\\

The full subcategory of $\pA(G,\pAl T)$ formed by the pairs $(\0,\cA)$ with $\cA\in\Al T$ and $\cD_x$ being a subalgebra of $\cA$ will be denoted by $\pA(G,\Al T)$. It contains the subcategory of global actions $\A(G,\Al T)=\pA(G,\Al T)\cap\A(G,\pAl T)$.

\begin{prop}\label{refl-A(GAl(T))-pA(GAl(T))}
 The subcategory $\A(G,\Al T)$ is reflective in $\pA(G,\Al T)$. 
\end{prop}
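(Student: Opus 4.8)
The plan is to realize the reflector as a composite of two reflections: first into $\A(G,\pAl T)$, then a ``totalization'' into $\A(G,\Al T)$. Let $(\0,\cA)\in\pA(G,\Al T)$, so $\cA$ is total and every $\cD_x$ is a subalgebra. By \cref{D_x-subalg} a globalization in $\A(G,\pAl T)$ exists, hence by \cref{ex-glob-pA(GpAl(T))<=>R_A^U-funct} the system $\cR(\cA)^U$ is functional, $\cA^U$ is a genuine partial algebra, and $(\0^U,\cA^U)$ is the $\A(G,\pAl T)$-reflector of $(\0,\cA)$ with reflector morphism $[1,-]$. Moreover $\A(G,\Al T)$ is a full subcategory of $\A(G,\pAl T)$, because a global action on a total algebra has all domains equal to the whole algebra, hence trivially subalgebras, and between total algebras a partial homomorphism is an ordinary homomorphism. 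Therefore it is enough to produce a reflection of $(\0^U,\cA^U)$ into $\A(G,\Al T)$ and to compose: the universal property of the composite follows by chaining the two universal properties together with their uniqueness clauses.

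For the second step I would use the total algebra $\F(\cS)$ \emph{absolutely freely generated} by a partial algebra $\cS$, namely the absolutely free algebra on the underlying set of $\cS$ factored by the least congruence identifying the formal term $f_\g(s_1,\dots,s_{n_\g})$ with $s$ whenever $f_\g(s_1,\dots,s_{n_\g})=s$ holds in $\cS$. The canonical map $\eta_\cS:\cS\to\F(\cS)$ is then a homomorphism of partial algebras with the following universal property: for every total algebra $\cB$ and every homomorphism $g:\cS\to\cB$ there is a unique homomorphism $\bar g:\F(\cS)\to\cB$ with $\bar g\circ\eta_\cS=g$; equivalently, $\F$ is left adjoint to the inclusion $\Al T\hookrightarrow\pAl T$. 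Applying $\F$ to the automorphisms $\0^U_x$ of $\cA^U$ and invoking functoriality, I obtain automorphisms $\vt_x:=\F(\0^U_x)$ of the total algebra $\F(\cA^U)$ obeying $\vt_x\vt_y=\vt_{xy}$ and $\vt_1=\id$, i.e.\ a global action $\vt$; thus $(\vt,\F(\cA^U))\in\A(G,\Al T)$ and $\eta:=\eta_{\cA^U}$ is equivariant.

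I claim $(\vt,\F(\cA^U))$, with reflector morphism $\eta\circ[1,-]$, is the desired $\A(G,\Al T)$-reflector of $(\0,\cA)$. Given $(\vt',\cB')\in\A(G,\Al T)$ and a morphism $\vf:(\0,\cA)\to(\vt',\cB')$, the reflectivity of $\A(G,\pAl T)$ provides a unique equivariant homomorphism $\psi_1:(\0^U,\cA^U)\to(\vt',\cB')$ with $\psi_1\circ[1,-]=\vf$. Since $\cB'$ is total, the universal property of $\F$ extends $\psi_1$ to a unique total homomorphism $\psi:\F(\cA^U)\to\cB'$ with $\psi\circ\eta=\psi_1$, whence $\psi\circ(\eta\circ[1,-])=\vf$. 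The uniqueness of such a $\psi$ follows by combining the uniqueness of $\psi_1$ with the uniqueness built into the universal property of $\F$.

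The step I expect to require the most care is checking that $\psi$ is genuinely a morphism in $\A(G,\Al T)$, i.e.\ that it is $G$-equivariant and not merely a homomorphism of total algebras. This is again a uniqueness argument: precomposing with $\eta$ and using the equivariance of $\eta$ and of $\psi_1$ gives $(\psi\circ\vt_x)\circ\eta=\psi_1\circ\0^U_x=\vt'_x\circ\psi_1=(\vt'_x\circ\psi)\circ\eta$, so the two total homomorphisms $\psi\circ\vt_x$ and $\vt'_x\circ\psi$ from $\F(\cA^U)$ to $\cB'$ agree on $\eta(\cA^U)$ and therefore coincide by the uniqueness in the universal property of $\F$. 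The same principle settles the remaining routine points, namely that $\F$ is a functor and that $x\mapsto\F(\0^U_x)$ respects composition and identities, so that $\vt$ is indeed a well-defined global action.
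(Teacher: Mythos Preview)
Your proof is correct and follows essentially the same two-step decomposition as the paper: first reflect $(\0,\cA)$ into $\A(G,\pAl T)$ via $(\0^U,\cA^U)$, then reflect $(\0^U,\cA^U)$ into $\A(G,\Al T)$ via the absolutely freely generated total algebra (your $\F(\cA^U)$ is the paper's $\cW(\cA^U)$, and your $\vt$ is the paper's $\bth^\U$). The only difference is presentational: for the second reflection the paper identifies $\A(G,\Al T)$ and $\A(G,\pAl T)$ with functor categories $\mathrm{Fun}(G,\Al T)$ and $\mathrm{Fun}(G,\pAl T)$ and invokes a general result that reflective subcategories lift to functor categories, whereas you unfold this by applying the left adjoint $\F$ objectwise to the automorphisms $\0^U_x$ and verifying equivariance and the universal property by hand---which is exactly what that general result amounts to here.
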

\begin{proof}
 It follows from \cref{ex-glob-pA(GpAl(T))<=>R_A^U-funct} that $\A(G,\pAl T)$ is reflective in $\pA(G,\Al T)$. Considering $G$ as the category with one object, observe that $\A(G,\Al T)$ and $\A(G,\pAl T)$ are isomorphic to the categories of functors from $G$ to $\Al T$ and $\pAl T$, respectively. Since $\Al T$ is reflective in $\pAl T$ by~\cite[p. 182, Corollary 2]{Gratzer}, then $\A(G,\Al T)$ is reflective in $\A(G,\pAl T)$ thanks to~\cite[Proposition IV.4.6]{Tsalenko-Shulgeyfer}. Thus, $\A(G,\Al T)$ is reflective in $\pA(G,\Al T)$.
\end{proof}

To give the precise form of an $\A(G,\Al T)$-reflector of $(\0,\cA)\in\pA(G,\Al T)$, we recall some basic constructions from  universal algebra.

Let $X$ be an arbitrary non-empty set, whose elements are called {\it letters}. We define a {\it word over $X$} as follows:  
\begin{enumerate}
 \item each $x\in X$ is identified with the word $x$ of length $l(x)=1$;\label{x-word}
 \item if $n_\g=0$, then $f_\g$ is a word of length $l(f_\g)=1$;\label{f_g-word}
 \item if $n_\g>0$ and $w_1,\dots,w_{n_\g}$ are words, then $w=f_\g(w_1,\dots,w_{n_\g})$ is a word of length  $l(w)=l(w_1)+\dots+l(w_{n_\g})+1$;\label{f_g(w_1...w_n_g)-word}
 \item each word can be obtained using \cref{x-word,f_g-word,f_g(w_1...w_n_g)-word} finitely many times.
\end{enumerate}
The set $W(X)$ of words over $X$ forms an algebra $\cW(X)$ under the operations defined in \cref{f_g-word,f_g(w_1...w_n_g)-word}. It is {\it the free algebra of type $T$ over $X$}.

Given $\cA\in\pAl T$, consider the free algebra $\cW(A)$. {\it The value} $v(w)$ of $w\in W(A)$ will be defined as follows.
\begin{enumerate}
 \item Let $a$ be a letter. Then $v(a)=a$ as an element of $A$.
 \item Let $n_\g=0$. If $(f_\g)_\cA\ne\emp$, then $v(f_\g)=(f_\g)_\cA$; otherwise $v(f_\g)=\emp$;
 \item Let $n_\g>0$ and $w=f_\g(w_1,\dots,w_{n_\g})$. If $\emp\ne v(w_i)=a_i$, $1\le i\le n_\g$, and $\emp\ne (f_\g)_\cA(a_1,\dots,a_{n_\g})=a$, then $v(w)=a$; otherwise $v(w)=\emp$.
\end{enumerate}
Thus, $v$ is a partial function $W(A)\dashrightarrow A$. It induces a congruence $\Th$ on $\cW(A)$: $(w,w')\in\Th$ if and only if either $\emp\ne v(w)=v(w')$, or 
$$
 w=f_\g(w_1,\dots,w_{n_\g}),\ w'=f_\g(w'_1,\dots,w'_{n_\g}),\ (w_i,w'_i)\in\Th,\ 1\le i\le n_\g
$$
(see~\cite[Theorem 14.1, Lemma 14.1]{Gratzer}). The quotient of $\cW(A)$ by $\Th$ is {\it the algebra absolutely freely generated by $\cA$} in the sense of~\cite[\S 28]{Gratzer}\footnote{When $T=\{2\}$, this algebra is called the free completion of $\cA$ in~\cite[IV.7.3]{Adamek-Trnkova} and the free extension of $\cA$ in~\cite[2.5.9]{Ljapin-Evseev}. See also~\cite[V.11.3]{Maltsev}.}. We denote it by $\cW(\cA)=(W(\cA),F)$. 

\begin{rem}\label{cW(cA)-refl}
 The algebra $\cW(\cA)$ is an $\Al T$-reflector of $\cA\in\pAl T$ with $a\mapsto[a]_\Th$ being the reflector morphism.
\end{rem}
\noindent This is explained by Definition~1 on p.~180 and Corollary~2 on p.~182 from~\cite{Gratzer}.\\

The following proposition clarifies the structure of $\Th$.
\begin{prop}\label{Theta-gen-by-pairs}
 The congruence $\Th$ is generated by the pairs
\begin{align}\label{gen-of-Theta}
 (f_\g(a_1,\dots,a_{n_\g}),(f_\g)_\cA(a_1,\dots,a_{n_\g})),
\end{align}
where $\g<o(T)$, $a_1,\dots,a_{n_\g}\in A$ and $(f_\g)_\cA(a_1,\dots,a_{n_\g})\ne\emp$. 
\end{prop}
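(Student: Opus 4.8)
Write $\Phi$ for the congruence on $\cW(A)$ generated by the pairs in \cref{gen-of-Theta}; the goal is to prove $\Phi=\Th$. The inclusion $\Phi\subseteq\Th$ is the easy half: since $\Th$ is a congruence, it suffices to verify that every generating pair already lies in $\Th$. For such a pair one simply computes values, namely $v(f_\g(a_1,\dots,a_{n_\g}))=(f_\g)_\cA(a_1,\dots,a_{n_\g})\ne\emp$ (using $v(a_i)=a_i$) and $v((f_\g)_\cA(a_1,\dots,a_{n_\g}))=(f_\g)_\cA(a_1,\dots,a_{n_\g})$, the right-hand word having length one. Thus the two words have equal non-empty value, so they are $\Th$-related by the first clause of the definition of $\Th$.

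The substantial half is $\Th\subseteq\Phi$, and the crux is the following claim, which I would prove first by induction on $l(w)$: for every $w\in W(A)$ with $v(w)\ne\emp$ one has $(w,v(w))\in\Phi$, where $v(w)\in A$ is read as a word of length one. If $w$ is a letter, then $v(w)=w$ and the claim is just reflexivity; if $w=f_\g$ is a nullary symbol with $v(f_\g)=(f_\g)_\cA\ne\emp$, then $(f_\g,(f_\g)_\cA)$ is itself a generating pair. If $w=f_\g(w_1,\dots,w_{n_\g})$ with $n_\g>0$ and $v(w)\ne\emp$, then by the definition of $v$ each $v(w_i)\ne\emp$ and $(f_\g)_\cA(v(w_1),\dots,v(w_{n_\g}))=v(w)$. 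The induction hypothesis gives $(w_i,v(w_i))\in\Phi$, so the substitution property of the congruence $\Phi$ applied to the operation $f_\g$ of $\cW(A)$ yields $(w,f_\g(v(w_1),\dots,v(w_{n_\g})))\in\Phi$; composing by transitivity with the generating pair $(f_\g(v(w_1),\dots,v(w_{n_\g})),v(w))$ gives $(w,v(w))\in\Phi$.

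With the claim in hand, $\Th\subseteq\Phi$ follows by a second induction, on $l(w)+l(w')$, that mirrors the recursive definition of $\Th$. If $(w,w')\in\Th$ because $\emp\ne v(w)=v(w')$, the claim gives $(w,v(w))\in\Phi$ and $(w',v(w'))\in\Phi$, whence $(w,w')\in\Phi$ by symmetry and transitivity. If instead $w=f_\g(w_1,\dots,w_{n_\g})$ and $w'=f_\g(w'_1,\dots,w'_{n_\g})$ with $(w_i,w'_i)\in\Th$ for all $i$, then each subword pair has strictly smaller total length, so the induction hypothesis gives $(w_i,w'_i)\in\Phi$, and the substitution property of $\Phi$ delivers $(w,w')\in\Phi$ (for $n_\g=0$ this case is just reflexivity).

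The main obstacle here is organizational rather than computational: because $\Th$ is specified by a mutually recursive biconditional rather than explicitly, the reverse inclusion has to be cast as a well-founded induction on word length, and one must check that the two clauses of the definition of $\Th$ are matched exhaustively by the two cases above. The genuine content is concentrated in the claim of the second paragraph, which identifies every evaluable word with its value modulo $\Phi$; once that is in place, the congruence axioms and the generating pairs combine routinely.
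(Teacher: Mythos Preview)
Your proof is correct and follows essentially the same route as the paper's: first the trivial inclusion via equal values, then the key lemma that every evaluable word is congruent to its value modulo the generated congruence (induction on $l(w)$), and finally the reverse inclusion by induction on $l(w)+l(w')$ along the two clauses defining $\Th$. The only difference is cosmetic organization of the last induction (you split directly by the two clauses, whereas the paper first isolates the base case $l(w)=1$ or $l(w')=1$); also note that the paper reserves the symbol $\Phi$ for a different congruence later on and calls your congruence $\vTh$.
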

\begin{proof}
 Denote by $\vTh$ the congruence generated by \cref{gen-of-Theta}. Obviously, $\vTh\subseteq\Th$, since each pair \cref{gen-of-Theta} has the same value $(f_\g)_\cA(a_1,\dots,a_{n_\g})\in A$. 
 
 For the converse inclusion we first show by induction on $l(w)$ that $(w,v(w))\in\vTh$ for any $w\in W(A)$ with $v(w)\ne\emp$. Indeed, this is clear, when $l(w)=1$. Let $w=f_\g(w_1,\dots,w_{n_\g})$ with $n_\g>0$ and $v(w)\ne\emp$. Then $v(w_i)\ne\emp$ for all $i$ and $\emp\ne(f_\g)_\cA(v(w_1),\dots,v(w_{n_\g}))=v(w)$. Note that $(w_i,v(w_i))\in\Th$. By the induction hypothesis $(w_i,v(w_i))\in\vTh$ and hence $(w,f_\g(v(w_1),\dots,v(w_{n_\g})))\in\vTh$. It remains to use \cref{gen-of-Theta} and transitivity.
 
 We now prove by induction on $l(w)+l(w')$ that $(w,w')\in\Th\impl(u,v)\in\vTh$. If $l(w)=1$ or $l(w')=1$, then $(w,w')\in\Th$ means that $\emp\ne v(w)=v(w')$. Since $(w,v(w)),(w',v(w'))\in\vTh$, by transitivity $(w,w')\in\vTh$. Now let $w=f_\g(w_1,\dots,w_{n_\g})$, $w'=f_\delta(w'_1,\dots,w'_{n_\delta})$ with $n_\g,n_\delta>0$ and $(w,w')\in\Th$. If $\emp\ne v(w)=v(w')$, then $(w,w')\in\vTh$ as above. Otherwise $n_\g=n_\delta$ and $(w_i,w'_i)\in\Th$, $1\le i\le n_\g$. By the induction hypothesis $(w_i,w'_i)\in\vTh$ for all $i$. Then clearly $(w,w')\in\vTh$.
\end{proof}

\begin{rem}\label{(bar-0cW(A))}
 Each partial action $\0$ of $G$ on a set $A$ naturally extends to a partial action $\bar\0$ of $G$ on $\cW(A)$ by the rule:
\begin{enumerate}
 \item if $w=a$ is a letter, then $\bar\0(x,w)\ne\emp\iff\0(x,a)\ne\emp$, and $\bar\0(x,w)=\0(x,a)$ in this case;
 \item if $n_\g=0$ and $w=f_\g$, then $\bar\0(x,w)=f_\g$;
 \item if $n_\g>0$ and $w=f_\g(w_1,\dots,w_{n_\g})$, then $\bar\0(x,w)\ne\emp\iff \bar\0(x,w_i)\ne\emp$ for $1\le i\le n_\g$, and $\bar\0(x,w)=f_\g(\bar\0(x,w_1),\dots,\bar\0(x,w_{n_\g}))$ in this case.
\end{enumerate} 
\end{rem}
\noindent Indeed, \cref{1a=a,x-inv-xa=a,(xy)a=x(ya)} of \cref{part-act-defn} for $\bar\0$ can be easily proved by induction on $l(w)$, and \cref{0_x-isom-between-subalg} is immediate. Note also that $\bar\0(x,w)\ne\emp\iff \0(x,a)\ne\emp$ for any letter $a$ of $w\in W(A)$.

\begin{prop}\label{(bar-0cW(A))-refl}
 The pair $(\bar\0,\cW(A))$ is a reflector of $(\0,A)\in\pA(G)$ in the category $\pA(G,\Al T)$ seen as a subcategory of $\pA(G)$, the reflector morphism is the inclusion $i:A\to W(A)$.
\end{prop}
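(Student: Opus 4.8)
The plan is to check \cref{refl-defn} directly, with the object $C=(\0,A)\in\pA(G)$, its reflector $R_{\sf D}(C)=(\bar\0,\cW(A))\in\pA(G,\Al T)$, and reflector morphism the inclusion $i:A\to W(A)$. Two preliminary facts come essentially for free from \cref{(bar-0cW(A))}: $\cW(A)$ is a total algebra and \cref{0_x-isom-between-subalg} holds for $\bar\0$ by clause (iii) of that remark, so each domain of $\bar\0$ is a subalgebra and $(\bar\0,\cW(A))$ genuinely lies in $\pA(G,\Al T)$; moreover $i$ is a morphism in $\pA(G)$, since by clause (i) one has $\bar\0(x,a)\ne\emp\iff\0(x,a)\ne\emp$, with the two maps agreeing on letters.

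The substance is the universal property. Given $(\vt,\cB)\in\pA(G,\Al T)$ and a morphism $\vf:(\0,A)\to(\vt,B)$ in $\pA(G)$, I would first build $\psi:\cW(A)\to\cB$ as an ordinary algebra homomorphism: because $\cW(A)$ is the free algebra of type $T$ over the \emph{set} $A$ and $\cB$ is total, there is a unique homomorphism with $\psi(a)=\vf(a)$ on letters, given recursively by $\psi(f_\g(w_1,\dots,w_{n_\g}))=(f_\g)_\cB(\psi(w_1),\dots,\psi(w_{n_\g}))$ and $\psi(f_\g)=(f_\g)_\cB$ when $n_\g=0$. Freeness simultaneously settles uniqueness: any morphism in $\pA(G,\Al T)$ extending $\vf$ is in particular such a homomorphism, hence equals $\psi$. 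It therefore remains only to verify that $\psi$ respects the partial actions, i.e. that $\bar\0(x,w)\ne\emp\impl\emp\ne\vt(x,\psi(w))=\psi(\bar\0(x,w))$.

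I would establish this by induction on $l(w)$, following the recursive shape of $\bar\0$. The letter case $w=a$ is precisely the assumption that $\vf$ is a morphism in $\pA(G)$ (\cref{morph-in-pA-new}). The nullary case $w=f_\g$ uses the defining condition on $(\vt,\cB)$: since $\cD_{x\m}$ is a subalgebra it contains the constant $(f_\g)_\cB$, so $\vt(x,(f_\g)_\cB)$ is defined and, by the nullary instance of \cref{pA(GpAl(T))-cond}, equals $(f_\g)_\cB=\psi(f_\g)=\psi(\bar\0(x,f_\g))$. For the inductive step, if $w=f_\g(w_1,\dots,w_{n_\g})$ with $\bar\0(x,w)\ne\emp$, then each $\bar\0(x,w_i)\ne\emp$ by clause (iii), so the induction hypothesis makes each $\vt(x,\psi(w_i))$ defined and equal to $\psi(\bar\0(x,w_i))$; applying the homomorphism $\psi$ to $\bar\0(x,w)=f_\g(\bar\0(x,w_1),\dots,\bar\0(x,w_{n_\g}))$ and then invoking \cref{0_x-isom-between-subalg} for $(\vt,\cB)$ to commute $\vt(x,-)$ past $f_\g$ yields $\emp\ne\vt(x,\psi(w))=\psi(\bar\0(x,w))$.

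I expect the inductive step to be the only real obstacle, and it is exactly where the defining hypothesis of $\pA(G,\Al T)$---that the domains be subalgebras, equivalently \cref{0_x-isom-between-subalg}---is indispensable: it is what guarantees $\vt(x,\psi(w))$ is actually defined (not merely that it coincides with $\psi(\bar\0(x,w))$ when both happen to exist), so that the required implication genuinely holds. The freeness argument, the uniqueness, and the base cases are routine bookkeeping.
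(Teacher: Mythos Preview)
Your proof is correct and follows essentially the same approach as the paper's: the paper observes that $i$ is trivially a morphism, invokes freeness of $\cW(A)$ to obtain the unique homomorphism $\bar\vf$ extending $\vf$, and then simply remarks that \cref{morph-in-pA-new} ``can be easily done by induction on $l(w)$''. You have carried out that induction in full, including the nullary case and the explicit appeal to \cref{0_x-isom-between-subalg} in the inductive step, which the paper leaves to the reader.
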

\begin{proof}
 The fact that $i$ is a morphism $(\0,A)\to(\bar\0,W(A))$ is trivial. Let $(\vt,\cB)\in\pA(G,\Al T)$ and $\vf$ be a morphism $(\0,A)\to(\vt,B)$ in $\pA(G)$. There exists a (unique) homomorphism $\bar\vf:\cW(A)\to\cB$ which extends the map $\vf:A\to B$. It remains to check \cref{morph-in-pA-new}, which can be easily done by induction on $l(w)$.
\end{proof}

\begin{prop}\label{(bvtcW(cA))-refl}
 Given $(\0,\cA)\in\A(G,\pAl T)$, the map $\bt(x,[w]_\Th)=[\bar\0(x,w)]_\Th$ is a well-defined action of $G$ on $\cW(\cA)$. Moreover, the pair $(\bt,\cW(\cA))$ is an $\A(G,\Al T)$-reflector of $(\0,\cA)$, the reflector morphism being $a\mapsto [a]_\Th$.
\end{prop}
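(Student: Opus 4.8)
The plan is to deduce everything from two facts already proved about the absolutely free algebra: that $\cW(\cA)$ is an $\Al T$-reflector of $\cA$ (\cref{cW(cA)-refl}) and that the congruence $\Th$ is generated by the pairs \cref{gen-of-Theta} (\cref{Theta-gen-by-pairs}). Write $\bar\0_x$ for $\bar\0(x,-)$. The only non-formal point is that $\bt$ is well defined, i.e. that $\bar\0_x(\Th)\subseteq\Th$ for every $x\in G$. Here I would use that, by the defining clauses in \cref{(bar-0cW(A))}, $\bar\0_x$ commutes with all the operations $f_\g$ of the free algebra $\cW(A)$, so it is an endomorphism of $\cW(A)$; since $\0$ is global, $\bar\0$ is global and $\bar\0_x$ is in fact an automorphism with inverse $\bar\0_{x\m}$. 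Consequently the set $\{(u,v)\mid(\bar\0_x(u),\bar\0_x(v))\in\Th\}$ is a congruence on $\cW(A)$, and by \cref{Theta-gen-by-pairs} it contains $\Th$ as soon as it contains every generator \cref{gen-of-Theta}. Computing with \cref{(bar-0cW(A))},
\begin{align*}
 \bar\0_x(f_\g(a_1,\dots,a_{n_\g}))&=f_\g(xa_1,\dots,xa_{n_\g}),\\
 \bar\0_x((f_\g)_\cA(a_1,\dots,a_{n_\g}))&=x(f_\g)_\cA(a_1,\dots,a_{n_\g}).
\end{align*}
Because $\0$ is a global action on the partial algebra $\cA$, \cref{pA(GpAl(T))-cond} forces $(f_\g)_\cA(xa_1,\dots,xa_{n_\g})=x(f_\g)_\cA(a_1,\dots,a_{n_\g})\ne\emp$ whenever $(f_\g)_\cA(a_1,\dots,a_{n_\g})\ne\emp$. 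Hence $\bar\0_x$ carries the generator \cref{gen-of-Theta} to the generator obtained by replacing each $a_i$ with $xa_i$, which again lies in $\Th$; this yields $\bar\0_x(\Th)\subseteq\Th$.

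Granting this, each $\bt_x$ is the automorphism of $\cW(\cA)=\cW(A)/\Th$ induced by $\bar\0_x$, and the action axioms $\bt(1,-)=\id$ and $\bt(x,\bt(y,-))=\bt(xy,-)$ descend directly from the corresponding identities for $\bar\0$ established in \cref{(bar-0cW(A))}; thus $(\bt,\cW(\cA))\in\A(G,\Al T)$. The map $\eta\colon a\mapsto[a]_\Th$ is a morphism $(\0,\cA)\to(\bt,\cW(\cA))$ in $\A(G,\pAl T)$: it is a homomorphism of partial algebras by \cref{cW(cA)-refl}, and it is $G$-equivariant because $\eta(xa)=[xa]_\Th=[\bar\0(x,a)]_\Th=\bt(x,\eta(a))$.

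For the universal property, given $(\vt,\cB)\in\A(G,\Al T)$ and a morphism $\vf\colon(\0,\cA)\to(\vt,\cB)$, I would invoke \cref{cW(cA)-refl} to get the unique homomorphism of total algebras $\psi\colon\cW(\cA)\to\cB$ with $\psi\circ\eta=\vf$. Uniqueness of $\psi$ among $\A(G,\Al T)$-morphisms over $\vf$ is then automatic, since any such morphism is a fortiori a homomorphism over $\vf$. To see that $\psi$ is itself $G$-equivariant, I would note that $u\mapsto\psi(\bt(x,u))$ and $u\mapsto\vt(x,\psi(u))$ are both homomorphisms $\cW(\cA)\to\cB$ (each being $\psi$ composed with an automorphism, $\bt_x$ or $\vt_x$), so they coincide once they agree on the generating set $\{[a]_\Th\}$; and there $\psi(\bt(x,[a]_\Th))=\psi([xa]_\Th)=\vf(xa)=x\vf(a)=\vt(x,\psi([a]_\Th))$, using the equivariance of $\vf$. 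Hence $\psi$ is a morphism in $\A(G,\Al T)$, and $(\bt,\cW(\cA))$ is the desired reflector.

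The main obstacle is the invariance $\bar\0_x(\Th)\subseteq\Th$; everything else is bookkeeping layered on top of \cref{cW(cA)-refl}. The subtlety to watch is that it is precisely the compatibility \cref{pA(GpAl(T))-cond} of $\0$ with the partial operations of $\cA$ that sends a generator of $\Th$ to another generator of $\Th$ (rather than merely into $\Th$), which is what makes the congruence argument go through cleanly.
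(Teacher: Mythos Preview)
Your argument is correct. The well-definedness step is exactly the paper's own: the paper also observes that $\bar\0_x$ carries a generator \cref{gen-of-Theta} of $\Th$ to another such generator, only you spell out the surrounding congruence argument in full.

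The genuine difference is in the reflector claim. The paper does not verify the universal property by hand; instead it notes that $\bt(x,[a]_\Th)=[\0(x,a)]_\Th$ and then appeals to \cref{cW(cA)-refl} together with general categorical results from \cite{Tsalenko-Shulgeyfer} (Propositions~IV.4.1 and IV.1.13 and the observation before Proposition~IV.4.6), to the effect that a reflector $\pAl T\to\Al T$ lifts pointwise to a reflector between the corresponding categories of $G$-objects. Your route---pull $\psi$ from the $\Al T$-reflector property of $\cW(\cA)$ and check $G$-equivariance on the generating set $\{[a]_\Th\}$---is a direct, self-contained verification that avoids the external references. What you gain is elementarity; what the paper's approach gains is that the same categorical principle is reused verbatim in \cref{refl-A(GAl(T))-pA(GAl(T))} and \cref{A(GV)-refl-in-pA(GV)}, so one argument covers several propositions.
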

\begin{proof}
To prove that $\bt$ is well-defined, it suffices to note that the image of a pair \cref{gen-of-Theta} under $\bar\0(x,-)$ is again a pair of the form \cref{gen-of-Theta}. Then clearly $(\bt,\cW(\cA))\in\A(G,\Al T)$.

Since $\bt(x,[a]_\Th)=[\bar\0(x,a)]_\Th=[\0(x,a)]_\Th$, the second assertion of the proposition follows from \cref{cW(cA)-refl} together with Propositions~IV.4.1 and IV.1.13 and the observation before Proposition~IV.4.6 from~\cite{Tsalenko-Shulgeyfer}.
\end{proof}

The next remark complements \cref{refl-A(GAl(T))-pA(GAl(T))}.
\begin{rem}\label{refl-A(GAl(T))-pA(GAl(T))-precise}
 The pair $(\bth^\U,\cW(\cA^U))$, where $\bth^\U(x,[w]_\Th)=[\ol{\0^U}(x,w)]_\Th$ for $w\in W(A^U)$, is an $\A(G,\Al T)$-reflector of $(\0,\cA)\in\pA(G,\Al T)$. The reflector morphism maps $a\in A$ to the $\Th$-class $[1,a]_\Th\in W(\cA^U)$ of the one-letter word $[1,a]\in W(A^U)$. 
\end{rem}

\begin{defn}\label{glob-A(GAl(T))-defn}
 A {\it globalization of $(\0,\cA)\in\pA(G,\Al T)$ in $\A(G,\Al T)$} is a globalization of $(\0,\cA)$ in $\A(G,\pAl T)$ which belongs to $\A(G,\Al T)$. 
\end{defn}

\begin{thrm}\label{glob-A(GAl(T))}
 Each $(\0,\cA)\in\pA(G,\Al T)$ admits a globalization in $\A(G,\Al T)$. Moreover, $(\bth^\U,\cW(\cA^U))$ is a universal globalization of $(\0,\cA)$ in $\A(G,\Al T)$.
\end{thrm}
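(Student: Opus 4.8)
The plan is to build on \cref{refl-A(GAl(T))-pA(GAl(T))-precise}, which already identifies $(\bth^\U,\cW(\cA^U))$ as an $\A(G,\Al T)$-reflector of $(\0,\cA)$ with reflector morphism $\eta\colon a\mapsto[1,a]_\Th$. As explained in the introduction, a reflector that happens to be a globalization is automatically universal: given any globalization $\iota'\colon(\0,\cA)\to(\vt',\cB')$ in $\A(G,\Al T)$, the target $(\vt',\cB')$ lies in $\A(G,\Al T)$, so the reflector property supplies a unique $\kappa$ with $\iota'=\kappa\circ\eta$. The theorem thus reduces to showing that $\eta$ is a globalization. Its underlying algebra $\cW(\cA^U)$ is total by the absolutely-free-completion construction, and $\bth^\U$ is global because $\0^U$ is global and its extension $\ol{\0^U}$ descends to $\cW(A^U)/\Th$ exactly as in \cref{(bvtcW(cA))-refl}; hence $(\bth^\U,\cW(\cA^U))\in\A(G,\Al T)$, and by the observation following \cref{glob-A(GpAl(T))} it remains only to verify that $\eta$ is injective, that $\eta(\cA)$ is a relative subalgebra of $\cW(\cA^U)$, and that \cref{xa-ne-emp-iff-xiota(a)-in-iota(M)} holds.

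The crux is that distinct one-letter words of $W(A^U)$ stay distinct in $\cW(\cA^U)=\cW(A^U)/\Th$. For a letter $p\in A^U$ one has $v(p)=p$, so the value-based description of $\Th$ recalled before \cref{cW(cA)-refl} (equivalently, \cref{Theta-gen-by-pairs}) shows that two letters $p,q$ satisfy $(p,q)\in\Th$ precisely when $\emp\ne v(p)=v(q)$, i.e.\ precisely when $p=q$. Thus $p\mapsto[p]_\Th$ embeds $A^U$ into $W(\cA^U)$. This is exactly what prevents the completion of the (in general proper) partial algebra $\cA^U$ to the total algebra $\cW(\cA^U)$ from collapsing the generators, and it is the only place where the passage to a total algebra could go wrong.

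Granting this, the three conditions are immediate. Injectivity of $\eta$ is the composite of the injection $a\mapsto[1,a]$ of $A$ into $A^U$ from \cref{(0^UA^U)-refl} with the letter embedding. As $\eta$ is a morphism in $\pA(G,\Al T)$ it is an algebra homomorphism, so for $a_1,\dots,a_{n_\g}\in A$ one has $f_\g(\eta(a_1),\dots,\eta(a_{n_\g}))=\eta(f_\g(a_1,\dots,a_{n_\g}))\in\eta(A)$; since $\cA$ is total this shows $\eta(A)$ is closed under every operation, hence a relative subalgebra. Finally, using the formula for $\bth^\U$ and \cref{(bar-0cW(A))} I would compute $x\eta(a)=[\ol{\0^U}(x,[1,a])]_\Th=[x,a]_\Th$, whence $x\eta(a)\in\eta(A)$ iff $[x,a]_\Th=[1,b]_\Th$ for some $b\in A$, iff $[x,a]=[1,b]$ in $A^U$ (by the letter embedding), iff $\emp\ne xa=b$ (by the definition of $\sim$); both implications of \cref{xa-ne-emp-iff-xiota(a)-in-iota(M)} follow at once.

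The main obstacle is concentrated in the letter-embedding step of the second paragraph: although its proof is short, it carries the whole content, since it is what certifies that completing $\cA^U$ to a total algebra neither identifies distinct points of $A^U$ nor pulls new points into the image of $A$. Everything else rests on the previously established reflectivity (\cref{refl-A(GAl(T))-pA(GAl(T))-precise}) and on the explicit descriptions of $\0^U$ and of $\Th$, and is purely formal.
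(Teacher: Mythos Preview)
Your proof is correct and follows essentially the same route as the paper's. Both arguments reduce to showing that the reflector morphism is a globalization, and both hinge on the fact that $\Th^\nat$ is injective on one-letter words; the paper cites Gr\"atzer's Theorem~14.2 for this, while you derive it directly from the value-based description of $\Th$, which amounts to the same thing.
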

\begin{proof}
 It is enough to prove that $(\bth^\U,\cW(\cA^U))$ is a globalization of $(\0,\cA)$ in $\A(G,\Al T)$. Taking into account~\cite[Theorem 14.2]{Gratzer}, we see that the reflector morphism $(\0,\cA)\to(\bth^\U,\cW(\cA^U))$ is injective, as $[1,-]$ is injective and $\Th^\nat$ restricted to the set of one-letter words $[x,a]\in W(A^U)$ is injective. It remains to check \cref{xa-ne-emp-iff-xiota(a)-in-iota(M)}. Suppose that there are $a,b\in A$, such that $[1,b]_\Th=\bth^\U(x,[1,a]_\Th)=[x,a]_\Th$. By~\cite[Theorem 14.2]{Gratzer} one has $[1,b]=[x,a]$ and hence $\0(x,a)\ne\emp$.
\end{proof}

\begin{rem}\label{bar-0^UcW(A^U)-refl}
 Let $(\0,A)\in\pA(G)$. Then a universal globalization of $(\bar\0,\cW(A))$ in $\A(G,\Al T)$ is isomorphic to $(\ol{\0^U},\cW(A^U))$.
\end{rem}

\subsection{Algebras with identities}\label{pA(GAl(TSigma))}

Let $T=\{n_\g\}_{\g<o(T)}$ be a type of algebras and $X$ a set. An {\it identity over $X$} is a pair of words $w,w'\in W(X)$ written as an equality $w=w'$. An algebra $\cA$ {\it satisfies an identity $w=w'$}, if for any homomorphism $\vf:\cW(X)\to\cA$ one has $\vf(w)=\vf(w')$. Given a set of identities $\Sg$, the {\it variety of algebras of type $T$ determined by $\Sg$} is the class $\Al{T,\Sg}$ of algebras of type $T$ satisfying each $\sigma\in\Sg$. 

We now fix $T$ and $\Sg$ and set $\V=\Al{T,\Sg}$. The variety $\V$ can be seen as a full subcategory of $\Al T$. 

\begin{prop}\label{Al(TSigma)-refl-in-Al(T)}
 The subcategory $\V$ is reflective in $\Al T$.
\end{prop}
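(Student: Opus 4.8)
The plan is to exhibit, for each $\cA\in\Al T$, a concrete $\V$-reflector as the quotient of $\cA$ by the least congruence forcing the defining identities to hold. For each $\s\in\Sg$, say $\s$ is the identity $w_\s=w'_\s$ over a variable set $X_\s$, and each homomorphism $\lambda\colon\cW(X_\s)\to\cA$, consider the pair $(\lambda(w_\s),\lambda(w'_\s))\in A^2$; let $\Th_\V$ be the congruence on $\cA$ generated by all such pairs. I set $R_\V(\cA)=\cA/\Th_\V$ and take as reflector morphism $\ve_\V(\cA)$ the canonical epimorphism $\Th_\V^\nat\colon\cA\to\cA/\Th_\V$.

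First I would verify that $R_\V(\cA)\in\V$. Fix $\s\in\Sg$ and an arbitrary homomorphism $\eta\colon\cW(X_\s)\to\cA/\Th_\V$; the goal is $\eta(w_\s)=\eta(w'_\s)$. For each $x\in X_\s$ pick a representative $a_x\in A$ with $[a_x]_{\Th_\V}=\eta(x)$, and let $\lambda\colon\cW(X_\s)\to\cA$ be the unique homomorphism extending $x\mapsto a_x$, which exists by freeness of $\cW(X_\s)$. Then $\Th_\V^\nat\circ\lambda$ and $\eta$ are homomorphisms out of a free algebra that agree on the generators $X_\s$, hence coincide. Since $(\lambda(w_\s),\lambda(w'_\s))$ is by construction one of the generating pairs of $\Th_\V$, it lies in $\Th_\V$, and therefore $\eta(w_\s)=\Th_\V^\nat(\lambda(w_\s))=\Th_\V^\nat(\lambda(w'_\s))=\eta(w'_\s)$. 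As $\s$ and $\eta$ were arbitrary, $R_\V(\cA)$ satisfies every identity of $\Sg$.

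Next I would check the universal property required by \cref{refl-defn}. Let $\cB\in\V$ and $\vf\colon\cA\to\cB$ be a homomorphism. For any generating pair $(\lambda(w_\s),\lambda(w'_\s))$ of $\Th_\V$, with $\lambda\colon\cW(X_\s)\to\cA$, the composite $\vf\circ\lambda\colon\cW(X_\s)\to\cB$ is a homomorphism into a $\V$-algebra, so $\vf(\lambda(w_\s))=\vf(\lambda(w'_\s))$; thus every generator of $\Th_\V$ lies in $\ker\vf$, whence $\Th_\V\subseteq\ker\vf$. The homomorphism theorem then yields a unique homomorphism $\psi\colon\cA/\Th_\V\to\cB$ with $\vf=\psi\circ\Th_\V^\nat$, uniqueness being guaranteed by surjectivity of $\Th_\V^\nat$. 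This is exactly the factorization property of \cref{refl-defn}, so $\V$ is reflective in $\Al T$.

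The main point to get right is the second step, that $R_\V(\cA)$ genuinely lands in $\V$: the congruence $\Th_\V$ is generated only by instances of the identities seen through homomorphisms \emph{into} $\cA$, whereas membership in $\V$ demands that the identities hold for \emph{all} homomorphisms into the quotient $\cA/\Th_\V$. The lifting argument bridges this gap, and it rests solely on the fact that a homomorphism out of a free algebra is determined by, and may be prescribed arbitrarily on, its generators, so that every homomorphism into $\cA/\Th_\V$ is the projection of one into $\cA$. The remaining steps are routine applications of the homomorphism theorem and \cref{refl-defn}.
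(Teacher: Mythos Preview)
Your proof is correct and follows essentially the same approach as the paper: both construct the reflector as the quotient of $\cA$ by the congruence generated by all homomorphic instances of the defining identities, and then verify the reflector property. The only differences are that the paper names this congruence $\Phi$ (reserving $\Th_\V$ for a different congruence introduced later) and delegates the verification that $\cA/\Phi\in\V$ to Birkhoff's theorem, whereas you spell out that step directly via the lifting argument.
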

\begin{proof}
 Let $\cA\in\Al T$. Define $\Phi$ to be the congruence on $\cA$ generated by the pairs $(\vf(w),\vf(w'))$, where $(w,w')\in\Sg$ and $\vf:\cW(X)\to\cA$ is a homomorphism. By Birkhoff's theorem $\cA/\Phi\in \V$ (see~\cite[Theorem 26.3]{Gratzer}), and it is easy to check that $\cA/\Phi$ is a $\V$-reflector of $\cA$.
\end{proof}

Let $G$ be a group. The embedding $\V\subseteq\Al T$ naturally defines the categories $\A(G,\V)\subseteq\pA(G,\V)\subseteq\pA(G,\Al T)$. 

\begin{cor}\label{A(GV)-refl-in-pA(GV)}
 The subcategory $\A(G,\V)$ is reflective in $\pA(G,\V)$.
\end{cor}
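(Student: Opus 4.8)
The plan is to combine a globalization-type reflection with the $\V$-reflection, following the scheme of the proof of \cref{refl-A(GAl(T))-pA(GAl(T))}, and then to cut the ambient category down to a full subcategory.

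First I would invoke \cref{refl-A(GAl(T))-pA(GAl(T))}, which already gives that $\A(G,\Al T)$ is reflective in $\pA(G,\Al T)$. Next, regarding $G$ as a one-object category, the global-action categories $\A(G,\V)$ and $\A(G,\Al T)$ are precisely the categories of functors from $G$ into $\V$ and into $\Al T$. Since $\V$ is reflective in $\Al T$ by \cref{Al(TSigma)-refl-in-Al(T)}, the functor-category principle \cite[Proposition IV.4.6]{Tsalenko-Shulgeyfer} shows that $\A(G,\V)$ is reflective in $\A(G,\Al T)$; concretely, the reflector sends a $G$-algebra to its quotient by the $G$-invariant congruence generated by the instances of the identities of $\Sg$. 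As a composite of reflectors is again a reflector, transitivity then yields that $\A(G,\V)$ is reflective in $\pA(G,\Al T)$.

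It remains to replace the ambient category $\pA(G,\Al T)$ by its full subcategory $\pA(G,\V)$. For this I would use the elementary observation that if ${\sf E}$ is reflective in ${\sf C}$ and ${\sf E}\subseteq{\sf C}'\subseteq{\sf C}$ are full subcategories, then ${\sf E}$ is reflective in ${\sf C}'$: for $C\in{\sf C}'$ the reflector $R_{\sf E}(C)$ already lies in ${\sf E}\subseteq{\sf C}'$, the reflector morphism is a morphism of ${\sf C}'$ by fullness, and its universal property against objects of ${\sf E}$ is inherited verbatim from ${\sf C}$, because every morphism occurring in it joins objects of ${\sf C}'$ and hence of ${\sf C}$. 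Taking ${\sf E}=\A(G,\V)$, ${\sf C}'=\pA(G,\V)$ and ${\sf C}=\pA(G,\Al T)$, and noting that $\pA(G,\V)$ is full in $\pA(G,\Al T)$ and contains $\A(G,\V)$, completes the proof.

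The argument is almost entirely formal, so I do not expect a genuine obstacle. The one point I would verify carefully is the final restriction step: I must check that $\pA(G,\V)$ is indeed full in $\pA(G,\Al T)$, so that no morphisms are lost or gained, and that the composite reflector of an object of $\pA(G,\V)$ really lands in $\A(G,\V)$. Both hold, since morphisms in $\pA(G,-)$ are algebra homomorphisms compatible with the partial action and $\V$ is a full subcategory of $\Al T$, while the reflector by construction maps into its target subcategory $\A(G,\V)\subseteq\pA(G,\V)$.
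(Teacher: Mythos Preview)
Your argument is correct and follows essentially the same route as the paper: first use \cref{Al(TSigma)-refl-in-Al(T)} together with the functor-category argument from the proof of \cref{refl-A(GAl(T))-pA(GAl(T))} to get $\A(G,\V)$ reflective in $\A(G,\Al T)$, then compose with \cref{refl-A(GAl(T))-pA(GAl(T))} to obtain reflectivity in $\pA(G,\Al T)$, and finally restrict to the full subcategory $\pA(G,\V)$. The paper handles the last step with a bare ``in particular'', whereas you spell out the elementary lemma about full intermediate subcategories, but the strategy is identical.
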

\noindent Indeed, using \cref{Al(TSigma)-refl-in-Al(T)} and the same argument as in the proof of \cref{refl-A(GAl(T))-pA(GAl(T))}, one concludes that $\A(G,\V)$ is reflective in $\A(G,\Al T)$. Moreover, $\A(G,\Al T)$ is reflective in $\pA(G,\Al T)$ by \cref{refl-A(GAl(T))-pA(GAl(T))}, therefore, $\A(G,\V)$ is reflective in $\pA(G,\Al T)$. In particular, $\A(G,\V)$ is reflective in $\pA(G,\V)$.

\begin{rem}\label{refl-A(GV)}
 The pair $(\wt{\bth^\U},\cW(\cA^U)/\Phi)$, where
 $$
  \wt{\bth^\U}(x,[[w]_\Th]_\Phi)=[\bth^\U(x,[w]_\Th)]_\Phi=[[\ol{\0^U}(x,w)]_\Th]_\Phi
 $$
 for $w\in W(A^U)$, is an $\A(G,\V)$-reflector of $(\0,\cA)\in\pA(G,\V)$. The reflector morphism is $\Phi^\nat\circ\Th^\nat\circ[1,-]$.
\end{rem}

We would like to write the reflector in a more convenient form. To this end we need some observations about congruences on abstract algebras.

The next fact is immediate.
\begin{lem}\label{vf^(-1)(Th)}
 Let $\vf:\cA\to\cB$ be a homomorphism and $\Th$ a congruence on $\cB$. Then $\vf\m(\Th)$ is a congruence on $\cA$.
\end{lem}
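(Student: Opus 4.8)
The plan is to verify directly the two defining properties of a congruence on $\cA$: that $\vf\m(\Th)$ is an equivalence relation and that it has the substitution property. Throughout I write $\vf\m(\Th)=\{(a,a')\in A^2\mid(\vf(a),\vf(a'))\in\Th\}$ for the preimage of $\Th$ under $\vf$.

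First I would check that $\vf\m(\Th)$ is an equivalence relation on $A$. This is a general fact about preimages of equivalence relations and uses none of the algebraic structure: reflexivity, symmetry and transitivity of $\vf\m(\Th)$ follow at once from the corresponding properties of $\Th$ applied to the images $\vf(a)$, $\vf(a')$, $\vf(a'')$ of the elements in question.

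The substitution property is the only place where the homomorphism property of $\vf$ is used. Suppose $(a_i,b_i)\in\vf\m(\Th)$ for $1\le i\le n_\g$, that is, $(\vf(a_i),\vf(b_i))\in\Th$. Since $\vf$ is a homomorphism, $\vf(f_\g(a_1,\dots,a_{n_\g}))=f_\g(\vf(a_1),\dots,\vf(a_{n_\g}))$ and likewise for the $b_i$. As $\Th$ is a congruence on $\cB$, its substitution property gives
$$
 (f_\g(\vf(a_1),\dots,\vf(a_{n_\g})),f_\g(\vf(b_1),\dots,\vf(b_{n_\g})))\in\Th,
$$
and rewriting both entries by the homomorphism identity turns this into $(\vf(f_\g(a_1,\dots,a_{n_\g})),\vf(f_\g(b_1,\dots,b_{n_\g})))\in\Th$, i.e. $(f_\g(a_1,\dots,a_{n_\g}),f_\g(b_1,\dots,b_{n_\g}))\in\vf\m(\Th)$, as required.

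I expect no real obstacle, which is why the statement is declared immediate; the only points needing attention are to carry the index $\g<o(T)$ and the arity $n_\g$ along correctly and to invoke the homomorphism identity in the right direction. For partial algebras the same argument works: definedness of $f_\g(a_1,\dots,a_{n_\g})$ and $f_\g(b_1,\dots,b_{n_\g})$ forces definedness of their images under the homomorphism $\vf$, so the substitution property \cref{subst-prop} for $\Th$ still applies.
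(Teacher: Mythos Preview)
Your proof is correct and is exactly the routine verification one would expect; the paper itself gives no proof, declaring the fact immediate. Your final remark handling the partial case via \cref{subst-prop} is also accurate and in keeping with the paper's conventions.
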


Given $\cA\in\Al T$ and $\rho\subseteq A^2$, by $\rho^*$ we denote the congruence on $\cA$ generated by $\rho$. It is the intersection of all the congruences on $\cA$ containing $\rho$.

\begin{lem}\label{vf(rho)^*}
 Let $\rho\subseteq A^2$ and $\vf:\cA\to\cB$ be a homomorphism. Then $\vf(\rho)^*=\vf(\rho^*)^*$.
\end{lem}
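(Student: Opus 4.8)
The plan is to establish the equality $\vf(\rho)^*=\vf(\rho^*)^*$ of congruences on $\cB$ by proving the two inclusions separately, where throughout $\vf(\sigma)$ for $\sigma\subseteq A^2$ is understood as the image $(\vf\times\vf)(\sigma)\subseteq B^2$. One inclusion will be a formal consequence of monotonicity, while the other will rest on \cref{vf^(-1)(Th)}.

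The inclusion $\vf(\rho)^*\subseteq\vf(\rho^*)^*$ is immediate. Since $\rho\subseteq\rho^*$, applying $\vf$ componentwise gives $\vf(\rho)\subseteq\vf(\rho^*)$; as the passage $\sigma\mapsto\sigma^*$ to the generated congruence is monotone (the generated congruence being the intersection of all congruences containing the relation), this yields $\vf(\rho)^*\subseteq\vf(\rho^*)^*$.

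For the reverse inclusion $\vf(\rho^*)^*\subseteq\vf(\rho)^*$, the key step is to pull the congruence $\vf(\rho)^*$ on $\cB$ back along $\vf$. I would set $\Th:=\vf\m(\vf(\rho)^*)=\{(a,a')\in A^2\mid(\vf(a),\vf(a'))\in\vf(\rho)^*\}$. By \cref{vf^(-1)(Th)}, $\Th$ is a congruence on $\cA$. Moreover $\rho\subseteq\Th$, since for $(a,a')\in\rho$ we have $(\vf(a),\vf(a'))\in\vf(\rho)\subseteq\vf(\rho)^*$. Because $\rho^*$ is the least congruence on $\cA$ containing $\rho$, it follows that $\rho^*\subseteq\Th$, which unwinds to $\vf(\rho^*)\subseteq\vf(\rho)^*$. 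Finally, as $\vf(\rho)^*$ is a congruence on $\cB$ containing $\vf(\rho^*)$, the congruence $\vf(\rho^*)^*$ generated by the latter is also contained in $\vf(\rho)^*$.

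Combining the two inclusions gives the claim. There is essentially no serious obstacle: the argument is the standard preimage trick, and the only point deserving care is bookkeeping about the componentwise conventions, namely that $\vf(\rho)$ denotes the image $(\vf\times\vf)(\rho)$ and that the preimage $\vf\m$ in \cref{vf^(-1)(Th)} is taken in the same sense, so that both the monotonicity remark and the preimage lemma apply verbatim.
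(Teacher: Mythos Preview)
Your proof is correct and follows essentially the same approach as the paper: both use monotonicity for the easy inclusion and, for the reverse, pull back the congruence $\vf(\rho)^*$ along $\vf$ via \cref{vf^(-1)(Th)} to obtain a congruence on $\cA$ containing $\rho$, hence containing $\rho^*$. The paper phrases the second inclusion slightly more abstractly (for an arbitrary congruence $\Th'$ on $\cB$ with $\Th'\supseteq\vf(\rho)$) before specializing, while you go directly to $\Th'=\vf(\rho)^*$, but the argument is the same.
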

\begin{proof}
 Clearly, $\vf(\rho)^*\subseteq\vf(\rho^*)^*$, as $\rho\subseteq\rho^*$. For the converse inclusion observe that $\vf(\rho^*)\subseteq\vf(\Th)$ for any congruence on $\cA$, such that $\Th\supseteq\rho$. In particular using \cref{vf^(-1)(Th)} we may take $\Th$ of the form $\vf\m(\Th')$, where $\Th'$ is a congruence on $\cB$. Thus, $\vf(\rho^*)\subseteq\vf(\vf\m(\Th'))\subseteq\Th'$ for any $\Th'$ on $\cB$ with $\vf\m(\Th')\supseteq\rho$. It remains to note that the latter inclusion is equivalent to $\Th'\supseteq\vf(\rho)$.
\end{proof}

\begin{prop}\label{(A/P)/P^nat(sigma)^*}
 Let $\cA\in\Al T$, $\rho,\s\subseteq A^2$ and $P=\rho^*$, $\vS=\s^*$. Then $(\cA/P)/P^\nat(\s)^*\cong (\cA/\vS)/\vS^\nat(\rho)^*$.
\end{prop}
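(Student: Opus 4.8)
The plan is to identify both quotients with a single object, namely $\cA/(\rho\cup\s)^*$, the quotient of $\cA$ by the congruence generated by $\rho$ and $\s$ together. Because $\rho\cup\s=\s\cup\rho$, this description is manifestly symmetric in the two relations, so once each side is shown to be isomorphic to $\cA/(\rho\cup\s)^*$ the proposition follows by composing the two isomorphisms.

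For the left-hand side, write $\pi=P^\nat\colon\cA\to\cA/P$ and let $\eta$ denote the canonical epimorphism $\cA/P\to(\cA/P)/P^\nat(\s)^*$. The composite $\eta\circ\pi$ is a surjective homomorphism of $\cA$, so by the homomorphism theorem it is enough to compute its kernel. Since $\ker\eta=P^\nat(\s)^*=\pi(\s)^*$, this kernel equals $\pi\m(\pi(\s)^*)$, which is a congruence on $\cA$ by \cref{vf^(-1)(Th)}. The heart of the proof is therefore the equality $\pi\m(\pi(\s)^*)=(\rho\cup\s)^*$.

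The key intermediate claim is $\pi(\s)^*=\pi((\rho\cup\s)^*)$. To obtain it, first observe that $\pi((\rho\cup\s)^*)$ is a congruence on $\cA/P$, being the image under the surjection $\pi$ of a congruence that contains $\ker\pi=P$; hence it equals the congruence it generates. Applying \cref{vf(rho)^*} to the relation $\rho\cup\s$ and the homomorphism $\pi$ then gives
\[
\pi((\rho\cup\s)^*)=\pi((\rho\cup\s)^*)^*=\pi(\rho\cup\s)^*=(\pi(\rho)\cup\pi(\s))^*.
\]
Now $\rho\subseteq\rho^*=P=\ker\pi$, so $\pi(\rho)$ is contained in the diagonal of $\cA/P$ and adds nothing to the generated congruence; thus $(\pi(\rho)\cup\pi(\s))^*=\pi(\s)^*$, proving the claim. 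Since $\pi$ is surjective and $(\rho\cup\s)^*\supseteq\ker\pi$, the correspondence theorem yields $\pi\m(\pi((\rho\cup\s)^*))=(\rho\cup\s)^*$, so $\pi\m(\pi(\s)^*)=(\rho\cup\s)^*$ as required, and the homomorphism theorem gives $(\cA/P)/P^\nat(\s)^*\cong\cA/(\rho\cup\s)^*$.

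Interchanging $\rho$ with $\s$ (equivalently $P$ with $\vS$) in the identical argument gives $(\cA/\vS)/\vS^\nat(\rho)^*\cong\cA/(\s\cup\rho)^*=\cA/(\rho\cup\s)^*$, which completes the proof. I expect the only genuinely delicate points to be the two standard facts about the surjection $\pi$ used above — that it sends congruences containing its kernel to congruences, and that $\pi\m\pi$ fixes such congruences — both of which are instances of the correspondence theorem for congruences; once these are available, the two invocations of \cref{vf(rho)^*} make the kernel computation routine.
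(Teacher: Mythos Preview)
Your proof is correct and follows essentially the same route as the paper: both arguments identify each side with $\cA/(\rho\cup\s)^*$, use \cref{vf(rho)^*} to reduce $P^\nat((\rho\cup\s)^*)^*$ to $P^\nat(\rho\cup\s)^*$, and then observe that $P^\nat(\rho)$ lies in the diagonal so that $P^\nat(\rho\cup\s)^*=P^\nat(\s)^*$. The only cosmetic difference is that the paper invokes the third isomorphism theorem (\cite[Theorem 11.4]{Gratzer}) directly to write $\cA/(\rho\cup\s)^*\cong(\cA/P)/((\rho\cup\s)^*/P)$, whereas you compute the kernel of the composite surjection and appeal to the first isomorphism and correspondence theorems; these are the same content unpacked differently.
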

\begin{proof}
 We shall prove $(\cA/P)/P^\nat(\s)^*\cong\cA/(\rho\cup\sigma)^*$ (the fact that $(\cA/\vS)/\vS^\nat(\rho)^*\cong\cA/(\rho\cup\sigma)^*$ is symmetric). Clearly, $P=\rho^*\subseteq(\rho\cup\sigma)^*$, so by~\cite[Theorem 11.4]{Gratzer} one has $\cA/(\rho\cup\sigma)^*\cong(\cA/P)/((\rho\cup\sigma)^*/P)$, where $(\rho\cup\sigma)^*/P$ is the congruence on $\cA/P$ defined by
 $$
  (P^\nat(a),P^\nat(b))\in(\rho\cup\sigma)^*/P\iff(a,b)\in(\rho\cup\sigma)^*.
 $$
 Observe that $(\rho\cup\sigma)^*/P$ is exactly $P^\nat((\rho\cup\sigma)^*)$, and since the latter is a congruence, by \cref{vf(rho)^*} it coincides with $P^\nat(\rho\cup\sigma)^*$. But $P^\nat(\rho\cup\sigma)^*=P^\nat(\sigma)^*$, as $P^\nat(\rho)$ is the equality relation.
\end{proof}

Since $\cW(X)$ is free over $X$ in $\Al T$, then its reflector $\cW(X)/\Phi$ in $\V$, denoted by $\cW_\V(X)=(W_\V(X),F)$, will be free over $X$ in $\V$. It is the so-called {\it free algebra over $X$ in the variety $\V$}. We shall need the injectivity of the map $X\to W_\V(X)$, $x\mapsto[x]_\Phi$. To this end we assume that $\V$ contains nontrivial algebras (see~\cite[p. 163, Corollary 2]{Gratzer}).

Let $\cA\in\pAl T$. Applying $\Phi^\nat$ to the generators \cref{gen-of-Theta} of the congruence $\Th$ on $\cW(A)$, we obtain the pairs
\begin{align}\label{gen-of-Th_Sg}
 ((f_\g)_{\cW_\V(A)}(a_1,\dots,a_{n_\g}),(f_\g)_\cA(a_1,\dots,a_{n_\g}))\in W_\V(A)^2,
\end{align}
where $(f_\g)_\cA(a_1,\dots,a_{n_\g})\ne\emp$, and the elements of $A$ are identified with their images in $W_\V(A)$. The congruence on $\cW_\V(A)$ generated by \cref{gen-of-Th_Sg} will be denoted by $\Th_\V$ and the corresponding quotient algebra by $\cW_\V(\cA)=(W_\V(\cA),F)$.

\begin{cor}\label{W(A)/Phi_Sg-cong-W_Sg(A)}
 The algebra $\cW(\cA)/\Phi$ is isomorphic to $\cW_\V(\cA)$.
\end{cor}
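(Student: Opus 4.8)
The plan is to deduce the isomorphism as a direct instance of \cref{(A/P)/P^nat(sigma)^*}, applied to the free algebra $\cW(A)\in\Al T$. Let $\rho$ be the set of generating pairs \cref{gen-of-Theta} of $\Th$, so that $\Th=\rho^*$, and let $\s$ be the set of pairs $(\vf(w),\vf(w'))$ with $(w,w')\in\Sg$ and $\vf:\cW(X)\to\cW(A)$ a homomorphism, so that by the description of $\Phi$ in \cref{Al(TSigma)-refl-in-Al(T)} one has $\Phi=\s^*$ on $\cW(A)$. With $P=\Th$ and $\vS=\Phi$, \cref{(A/P)/P^nat(sigma)^*} then gives
$$
 (\cW(A)/\Th)/\Th^\nat(\s)^*\cong(\cW(A)/\Phi)/\Phi^\nat(\rho)^*.
$$
It remains to recognize each side as the claimed algebra.

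First I would identify the right-hand side with $\cW_\V(\cA)$. By definition $\cW(A)/\Phi=\cW_\V(A)$, and since $\Phi^\nat$ is a homomorphism carrying each letter $a\in A$ and each word $f_\g(a_1,\dots,a_{n_\g})$ to its image in $\cW_\V(A)$, the set $\Phi^\nat(\rho)$ is exactly the collection \cref{gen-of-Th_Sg} of generators of $\Th_\V$ (under the identification of the elements of $A$ with their images in $W_\V(A)$). Hence $\Phi^\nat(\rho)^*=\Th_\V$, and the right-hand side is $\cW_\V(A)/\Th_\V=\cW_\V(\cA)$.

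Next I would identify the left-hand side with $\cW(\cA)/\Phi$. Here $\cW(A)/\Th=\cW(\cA)$, so what must be shown is that $\Th^\nat(\s)^*$ is the congruence on $\cW(\cA)$ defining its $\V$-reflector, i.e. the one generated by all pairs $(\psi(w),\psi(w'))$ with $(w,w')\in\Sg$ and $\psi:\cW(X)\to\cW(\cA)$ a homomorphism. On the one hand, every pair in $\Th^\nat(\s)$ has the form $((\Th^\nat\circ\vf)(w),(\Th^\nat\circ\vf)(w'))$ with $\Th^\nat\circ\vf:\cW(X)\to\cW(\cA)$ a homomorphism, hence lies among these generators. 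On the other hand, since $\Th^\nat$ is surjective and $\cW(X)$ is free over $X$, every homomorphism $\psi:\cW(X)\to\cW(\cA)$ factors as $\psi=\Th^\nat\circ\vf$ for a suitable $\vf:\cW(X)\to\cW(A)$, obtained by choosing a $\Th^\nat$-preimage of each $\psi(x)$ and extending freely. Thus the two generating sets coincide, so $\Th^\nat(\s)^*=\Phi$ on $\cW(\cA)$ and the left-hand side is $\cW(\cA)/\Phi$. Combining the two identifications yields $\cW(\cA)/\Phi\cong\cW_\V(\cA)$.

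The only nonroutine point, and the step I would treat with care, is the lifting argument in the last paragraph: the equality $\Th^\nat(\s)^*=\Phi$ on $\cW(\cA)$ rests on using the freeness of $\cW(X)$ to lift an arbitrary homomorphism $\cW(X)\to\cW(\cA)$ along the quotient map $\Th^\nat$, so that the $\Th^\nat$-images of the variety generators on $\cW(A)$ exhaust precisely the variety generators on $\cW(\cA)$. Everything else is a bookkeeping translation through \cref{(A/P)/P^nat(sigma)^*}.
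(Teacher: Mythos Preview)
Your proof is correct and follows essentially the same route as the paper: both apply \cref{(A/P)/P^nat(sigma)^*} to $\cW(A)$ with the generating sets of $\Th$ and of $\Phi$, and both use the freeness of $\cW(X)$ together with the surjectivity of $\Th^\nat$ to identify $\Th^\nat(\s)^*$ with the congruence $\Phi$ on $\cW(\cA)$. Your write-up is simply a more detailed unpacking of the paper's two-line argument, making explicit the identification of each side of the isomorphism.
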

\noindent Indeed, each homomorphism $\vf:\cW(X)\to\cW(\cA)$ has the form $\Th^\nat\circ\phi$, for some homomorphism $\phi:\cW(X)\to\cW(A)$. Therefore, the generators of $\Phi$ on $\cW(\cA)$ are the images under $\Th^\nat$ of the generators of the analogous congruence on $\cW(A)$. It remains to apply \cref{(A/P)/P^nat(sigma)^*}.

\begin{cor}\label{refl-A(GV)-equiv-form}
 The pair $(\bth^\U_\V,\cW_\V(\cA^U))$, where 
 $$
  \bth^\U_\V(x,[[w]_\Phi]_{\Th_\V})=[[\ol{\0^U}(x,w)]_\Phi]_{\Th_\V}
 $$
 for $w\in W(A^U)$, is an $\A(G,\V)$-reflector of $(\0,\cA)\in\pA(G,\V)$. The reflector morphism is $\Th_\V^\nat\circ\Phi^\nat\circ[1,-]$.
\end{cor}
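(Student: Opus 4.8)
The plan is to derive the corollary from \cref{refl-A(GV)} by transporting the already-established reflector along the isomorphism of \cref{W(A)/Phi_Sg-cong-W_Sg(A)}, and then invoking the uniqueness of reflectors up to natural isomorphism recorded in \cref{refl-functor}. By \cref{refl-A(GV)} we already know that $(\wt{\bth^\U},\cW(\cA^U)/\Phi)$ is an $\A(G,\V)$-reflector of $(\0,\cA)$, so it suffices to exhibit an isomorphism $\cW(\cA^U)/\Phi\cong\cW_\V(\cA^U)$ in $\A(G,\V)$ that carries $\wt{\bth^\U}$ to $\bth^\U_\V$ and the reflector morphism $\Phi^\nat\circ\Th^\nat\circ[1,-]$ to $\Th_\V^\nat\circ\Phi^\nat\circ[1,-]$.

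First I would apply \cref{W(A)/Phi_Sg-cong-W_Sg(A)} to the partial algebra $\cA^U$, which yields an isomorphism of $\V$-algebras $\cW(\cA^U)/\Phi\cong\cW_\V(\cA^U)$. Tracing through its proof via \cref{(A/P)/P^nat(sigma)^*} (with $\cW(A^U)$ in the role of $\cA$, the pairs \cref{gen-of-Theta} as $\rho$, and the pairs $(\vf(w),\vf(w'))$, $(w,w')\in\Sg$, as $\s$), one sees that both quotients are canonically identified with $\cW(A^U)/(\rho\cup\s)^*$; hence the isomorphism is induced by the identity on $\cW(A^U)$ and sends $[[w]_\Th]_\Phi$ to $[[w]_\Phi]_{\Th_\V}$ for every $w\in W(A^U)$.

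It remains to check that this isomorphism lies in $\A(G,\V)$, i.\,e. is $G$-equivariant. Both actions $\wt{\bth^\U}$ and $\bth^\U_\V$ are induced from one and the same action $\ol{\0^U}$ on $\cW(A^U)$ through the respective quotient maps: by definition $\wt{\bth^\U}(x,[[w]_\Th]_\Phi)=[[\ol{\0^U}(x,w)]_\Th]_\Phi$ and $\bth^\U_\V(x,[[w]_\Phi]_{\Th_\V})=[[\ol{\0^U}(x,w)]_\Phi]_{\Th_\V}$. Since $\0^U$ is global, each $\ol{\0^U}(x,-)$ is an automorphism of the abstract algebra $\cW(A^U)$ (being the value of a global action, cf. \cref{bar-0^UcW(A^U)-refl}), so it preserves $\Phi$ and sends the generators \cref{gen-of-Th_Sg} to generators of the same form; consequently $\bth^\U_\V$ is a well-defined action and, under the identification $[[w]_\Th]_\Phi\leftrightarrow[[w]_\Phi]_{\Th_\V}$, it agrees term by term with $\wt{\bth^\U}$. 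Thus the isomorphism is an isomorphism in $\A(G,\V)$. Applying \cref{refl-functor} we conclude that $(\bth^\U_\V,\cW_\V(\cA^U))$ is again an $\A(G,\V)$-reflector of $(\0,\cA)$, and composing the reflector morphism $\Phi^\nat\circ\Th^\nat\circ[1,-]$ of \cref{refl-A(GV)} with the isomorphism yields exactly $\Th_\V^\nat\circ\Phi^\nat\circ[1,-]$. The only genuinely delicate point is the verification in the second paragraph that the abstract isomorphism of \cref{W(A)/Phi_Sg-cong-W_Sg(A)} is induced by the identity on $\cW(A^U)$, since this is precisely what makes the two action formulas match under the identification; everything else is a formal appeal to the uniqueness of reflectors.
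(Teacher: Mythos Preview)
Your proposal is correct and follows essentially the same route as the paper, which simply records that the corollary follows from \cref{refl-A(GV)} and \cref{W(A)/Phi_Sg-cong-W_Sg(A)}. You have spelled out in detail the points the paper leaves implicit---namely that the isomorphism of \cref{W(A)/Phi_Sg-cong-W_Sg(A)} (applied to $\cA^U$) is induced by the identity on $\cW(A^U)$ and is therefore $G$-equivariant---but this is an elaboration of the same argument rather than a different one.
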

\noindent This follows from \cref{refl-A(GV),W(A)/Phi_Sg-cong-W_Sg(A)}.

\begin{defn}\label{glob-A(GV)}
 A {\it globalization of $(\0,\cA)\in\pA(G,\V)$ in $\A(G,\V)$} is a globalization of $(\0,\cA)$ in $\A(G,\Al T)$ which belongs to $\A(G,\V)$.
\end{defn}

\begin{thrm}\label{glob-pA(GV)}
 There exists a globalization of $(\0,\cA)\in\pA(G,\V)$ in $\A(G,\V)$ if and only if
 \begin{align}\label{([xa][yb])-in-Th_Sg=>[xa]=[yb]}
  ([x,a],[y,b])\in\Th_\V\impl[x,a]=[y,b]
 \end{align}
 for all $[x,a],[y,b]\in A^U$ seen as the elements of $W_\V(A^U)$. In this case the reflector $(\bth^\U_\V,\cW_\V(\cA^U))$ is a universal globalization of $(\0,\cA)$ in $\A(G,\V)$.
\end{thrm}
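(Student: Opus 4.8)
The plan is to use the explicit description of the $\A(G,\V)$-reflector from \cref{refl-A(GV)-equiv-form} and to read off exactly when its reflector morphism is a globalization. Write $\ol{[x,a]}$ for $\Th_\V^\nat(\Phi^\nat([x,a]))$, the image in $W_\V(\cA^U)$ of the one-letter word $[x,a]\in W(A^U)$; thus the reflector morphism is $\iota\colon a\mapsto\ol{[1,a]}$, its image is $\iota(A)=\{\ol{[1,b]}\mid b\in A\}$, and $\ol{[x,a]}=\ol{[y,b]}$ holds if and only if $([x,a],[y,b])\in\Th_\V$. Feeding the one-letter word $[1,a]$ into the formula for $\bth^\U_\V$ gives $\bth^\U_\V(x,\iota(a))=\ol{[x,a]}$, since $\ol{\0^U}(x,[1,a])=[x,a]$. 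Because $\cA$ and $\cW_\V(\cA^U)$ are total, $\iota$ is an everywhere-defined homomorphism whose image is automatically a relative subalgebra, so by the observation following \cref{glob-A(GpAl(T))} the reflector is a globalization precisely when $\iota$ is injective and \cref{xa-ne-emp-iff-xiota(a)-in-iota(M)} holds. As the implication $\0(x,a)\ne\emp\impl\bth^\U_\V(x,\iota(a))\in\iota(A)$ is just the morphism condition and always holds, everything reduces to injectivity of $\iota$ together with the reverse implication $\bth^\U_\V(x,\iota(a))\in\iota(A)\impl\0(x,a)\ne\emp$.

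For the sufficiency of \cref{([xa][yb])-in-Th_Sg=>[xa]=[yb]}, injectivity of $\iota$ is immediate: $\iota(a)=\iota(a')$ reads $\ol{[1,a]}=\ol{[1,a']}$, whence $[1,a]=[1,a']$ by hypothesis and $a=a'$ since $[1,-]$ is injective. For the reverse implication, if $\bth^\U_\V(x,\iota(a))=\ol{[x,a]}$ equals some $\ol{[1,b]}\in\iota(A)$, then $([x,a],[1,b])\in\Th_\V$, so $[x,a]=[1,b]$ in $A^U$ by hypothesis, i.e. $\0(x,a)=b\ne\emp$. Hence the reflector is a globalization, and its universality among all globalizations in $\A(G,\V)$ follows at once from the reflector property of \cref{refl-A(GV)-equiv-form}, every globalization being in particular a morphism from $(\0,\cA)$ to an object of $\A(G,\V)$.

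For the necessity, suppose a globalization $\iota'\colon(\0,\cA)\to(\vt,\cB)$ with $(\vt,\cB)\in\A(G,\V)$ exists. The reflector property yields a unique morphism $\psi\colon(\bth^\U_\V,\cW_\V(\cA^U))\to(\vt,\cB)$ with $\iota'=\psi\circ\iota$, and $G$-equivariance together with $\bth^\U_\V(x,\iota(a))=\ol{[x,a]}$ gives $\psi(\ol{[x,a]})=\vt(x,\iota'(a))=x\iota'(a)$. Now if $([x,a],[y,b])\in\Th_\V$, i.e. $\ol{[x,a]}=\ol{[y,b]}$, then applying $\psi$ gives $x\iota'(a)=y\iota'(b)$, hence $\vt(y\m x,\iota'(a))=\iota'(b)\in\iota'(A)$. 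Since $\iota'$ is a globalization, this forces $\0(y\m x,a)\ne\emp$, and then the morphism property and injectivity of $\iota'$ yield $\0(y\m x,a)=b$, that is $(y\m x)a=b$; by definition of $\sim$ this is exactly $[x,a]=[y,b]$, establishing \cref{([xa][yb])-in-Th_Sg=>[xa]=[yb]}.

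The routine verifications are the two identities $\bth^\U_\V(x,\iota(a))=\ol{[x,a]}$ and $\psi(\ol{[x,a]})=x\iota'(a)$, which merely unwind the definitions of the reflector and of morphisms of global actions. The only genuinely delicate point is the necessity argument: one must phrase it in terms of an \emph{arbitrary} globalization $\iota'$ rather than the reflector itself, using the reflector's universal morphism $\psi$ as the bridge, and then invoke the direction $\vt(y\m x,\iota'(a))\in\iota'(A)\impl\0(y\m x,a)\ne\emp$ of the globalization condition for $\iota'$ (the direction not supplied by the morphism property). Everything else is a direct translation between the congruence $\Th_\V$ on one-letter words and the equivalence $\sim$ defining $A^U$.
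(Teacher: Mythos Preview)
Your proof is correct and follows essentially the same strategy as the paper. The sufficiency direction is identical in substance: both verify injectivity of the reflector morphism and the condition \cref{xa-ne-emp-iff-xiota(a)-in-iota(M)} directly from \cref{([xa][yb])-in-Th_Sg=>[xa]=[yb]}.

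For the necessity there is a small organizational difference worth noting. The paper first asserts that if any globalization exists then the reflector itself is already a globalization, and then works entirely inside the reflector (using its injectivity and \cref{xa-ne-emp-iff-xiota(a)-in-iota(M)}) to deduce \cref{([xa][yb])-in-Th_Sg=>[xa]=[yb]}. You instead take an arbitrary globalization $\iota'$, push the equality $\ol{[x,a]}=\ol{[y,b]}$ through the universal morphism $\psi$, and invoke the globalization condition for $\iota'$. Your route is slightly more self-contained, since the paper's opening assertion ``the reflector is a globalization'' is itself justified by exactly the factorization argument you write out; conversely, the paper's phrasing makes the second statement of the theorem (that the reflector is the universal globalization) come out a bit more cleanly. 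Either way the content is the same.
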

\begin{proof}
 If $(\0,\cA)$ is globalizable, then the reflector of $(\0,\cA)$ is its globalization. Suppose that
 \begin{align}\label{([xa][yb])-in-Th_Sg}
 ([x,a],[y,b])\in\Th_\V.
 \end{align}
 Then $y\m x[1,a]_{\Th_\V}=y\m[x,a]_{\Th_\V}=[1,b]_{\Th_\V}$. Hence $(y\m x)a\ne\emp$ by \cref{xa-ne-emp-iff-xiota(a)-in-iota(M)} and thus $[x,a]=[y,(y\m x)a]$. It follows from \cref{([xa][yb])-in-Th_Sg} that $[1,(y\m x)a]_{\Th_\V}=[1,b]_{\Th_\V}$. Therefore $(y\m x)a=b$, since the reflector morphism is injective in this case. So, $[x,a]=[y,b]$.
 
 Conversely, assume \cref{([xa][yb])-in-Th_Sg=>[xa]=[yb]}. We shall prove that the reflector is a globalization. It immediately follows that the reflector morphism is injective. Let us verify \cref{xa-ne-emp-iff-xiota(a)-in-iota(M)}. If $[1,b]_{\Th_\V}=x[1,a]_{\Th_\V}=[x,a]_{\Th_\V}$, then $[1,b]=[x,a]$, so $\emp\ne xa=b$.
\end{proof}

The following example is inspired by~\cite[Example 3.5]{DE}.

\begin{exm}\label{non-inj-refl-morph}
 Let $\sem$ be the variety of semigroups. Consider $\cS=(S,\cdot)\in\sem$, where $S=\{0,u,v,t\}$ with $0$ being zero and 
 $$
 u^2=v^2=t^2=uv=vu=ut=tu=0,\ \ vt=tv=u.
 $$ 
Take $G=\la x\mid x^2=1\ra$ and define $\0:G\times S\dashrightarrow S$ by $x0=0$, $xu=v$, $xv=u$, $xt=\emp$, $\0(1,-)=\id_S$. Then $(\0,\cS)\in\pA(G,\sem)$. Observe that in $W_\sem(S^U)$
\begin{align*}
 [1,v]&=[x,u]\sim_{\Th_\sem}[x,v][x,t]=[1,u][x,t]\sim_{\Th_\sem}[1,t][1,v][x,t]\\
 &=[1,t][x,u][x,t]\sim_{\Th_\sem}[1,t][x,0]=[1,t][1,0]\sim_{\Th_\sem}[1,0].
\end{align*}
Consequently, $(\0,\cS)$ is not globalizable in $\A(G,\sem)$ by \cref{glob-pA(GV)}.
\end{exm}

\section{Partial actions and generalized amalgams}\label{amalg}

Let $\V=\Al{T,\Sg}$ be a fixed variety of algebras. The following definitions should be well-known,  but we could not find an appropriate reference.

\begin{defn}\label{amalg-defn}
By a {\it (generalized) amalgam of $\V$-algebras} we mean a triple
\begin{align*}
 \bA=[\{\cA_i\}_{i\in I},\{\cA_{ij}\}_{i,j\in I}, \{\alpha_{ij}\}_{i,j\in I}],
\end{align*}
where $\cA_i\in\V$, $\cA_{ij}$ is a subalgebra of $\cA_i$ and $\alpha_{ij}$ is an isomorphism $\cA_{ij}\to\cA_{ji}$ with $\alpha_{ji}=\alpha_{ij}\m$, $i,j\in I$ (for convenience we shall assume that $A_{ii}=A_i$ and $\alpha_{ii}=\id$). 
\end{defn}

 
\begin{defn}\label{embed-defn}
We say that an amalgam $\bA$ is {\it embeddable} into an algebra $\cA\in\V$, if there exist injective morphisms $\vf_i:\cA_i\to\cA$, such that $\vf_j\circ\alpha_{ij}=\vf_i|_{A_{ij}}$ and $\vf_i(A_i)\cap\vf_j(A_j)=\vf_i(A_{ij})$ (which coincides with $\vf_j(A_{ji})$).
\end{defn}

\begin{defn}\label{amalg-free-prod-defn}
 Given an amalgam $\bA$, the quotient of the free algebra $\cW_\V(\bigsqcup A_i)$ by the congruence $N$ generated by
 \begin{align}
    &\bigcup_{k,\g}\{((f_\g)_{\cW_\V(\bigsqcup A_i)}(a_1,\dots,a_{n_\g}),(f_\g)_{\cA_k}(a_1,\dots,a_{n_\g}))\mid a_1,\dots,a_{n_\g}\in A_k\}\label{gen-mult-in-free-prod}\\
  &\cup\{(a,\alpha_{ij}(a))\mid i,j\in I, a\in A_{ij}\}\label{gen-amalg-prod},
 \end{align}
 will be denoted by $\prod^*_{\cA_{ij}=\cA_{ji}}\cA_i$ and called the {\it free product of algebras $\cA_i$ with amalgamated subalgebras $\cA_{ij}$}.
\end{defn}

The maps $\nu_i(a)=[a]_N$, $a\in A_i$, are homomorphisms $\cA_i\to\prod^*_{\cA_{ij}=\cA_{ji}}\cA_i$, as $N$ contains \cref{gen-mult-in-free-prod}, and they satisfy $\nu_j\circ\alpha_{ij}=\nu_i|_{A_{ij}}$, since $N$ contains \cref{gen-amalg-prod}. Moreover, $\prod^*_{\cA_{ij}=\cA_{ji}}\cA_i$ is universal among the algebras with such properties, as the next lemma shows.

\begin{lem}\label{univ-amalg-prod}
 Let $\bA$ be an amalgam, $\cA\in \V$ and $\vf_i:\cA_i\to\cA$ homomorphisms satisfying $\vf_j\circ\alpha_{ij}=\vf_i|_{A_{ij}}$. Then there is a unique homomorphism $\psi:\prod^*_{\cA_{ij}=\cA_{ji}}\cA_i\to\cA$, such that $\psi\circ\nu_i=\vf_i$.
\end{lem}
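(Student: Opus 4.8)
The plan is to exploit the two universal properties built into the construction of $\prod^*_{\cA_{ij}=\cA_{ji}}\cA_i=\cW_\V(\bigsqcup A_i)/N$: the freeness of $\cW_\V(\bigsqcup A_i)$ over the set $\bigsqcup A_i$ in $\V$, and the factorization of a homomorphism through the quotient by a congruence contained in its kernel. Given the homomorphisms $\vf_i:\cA_i\to\cA$, the disjointness of $\bigsqcup A_i$ lets me assemble a single set map $\vf:\bigsqcup A_i\to A$ by setting $\vf(a)=\vf_i(a)$ for $a\in A_i$. Since $\cW_\V(\bigsqcup A_i)$ is free over $\bigsqcup A_i$ in $\V$, this extends uniquely to a $\V$-homomorphism $\bar\vf:\cW_\V(\bigsqcup A_i)\to\cA$.

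The main step is to verify $N\subseteq\ker\bar\vf$. As $\ker\bar\vf$ is a congruence, it suffices to check that $\bar\vf$ collapses each generating pair of $N$. For a pair from \cref{gen-mult-in-free-prod} with $a_1,\dots,a_{n_\g}\in A_k$, both $\bar\vf((f_\g)_{\cW_\V(\bigsqcup A_i)}(a_1,\dots,a_{n_\g}))$ and $\bar\vf((f_\g)_{\cA_k}(a_1,\dots,a_{n_\g}))$ reduce to $(f_\g)_\cA(\vf_k(a_1),\dots,\vf_k(a_{n_\g}))$: the first because $\bar\vf$ is a homomorphism, the second because $\vf_k:\cA_k\to\cA$ is. For a pair from \cref{gen-amalg-prod} with $a\in A_{ij}$, we have $\bar\vf(a)=\vf_i(a)$ and $\bar\vf(\alpha_{ij}(a))=\vf_j(\alpha_{ij}(a))$, and these agree precisely by the hypothesis $\vf_j\circ\alpha_{ij}=\vf_i|_{A_{ij}}$.

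Hence $N\subseteq\ker\bar\vf$, and the homomorphism theorem yields a unique $\V$-homomorphism $\psi:\prod^*_{\cA_{ij}=\cA_{ji}}\cA_i\to\cA$ with $\psi\circ N^\nat=\bar\vf$. The required identity $\psi\circ\nu_i=\vf_i$ is then immediate, since $\nu_i(a)=N^\nat(a)$ and $\bar\vf(a)=\vf_i(a)$ for $a\in A_i$. For uniqueness I would observe that $\prod^*_{\cA_{ij}=\cA_{ji}}\cA_i$ is generated as a $\V$-algebra by the classes $\nu_i(a)=[a]_N$, $a\in\bigsqcup A_i$, so any homomorphism agreeing with $\psi$ on all of these must coincide with $\psi$.

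I expect no serious obstacle: the argument is entirely formal, and the only point requiring attention is confirming that both families of generating pairs lie in $\ker\bar\vf$, where the amalgam compatibility $\vf_j\circ\alpha_{ij}=\vf_i|_{A_{ij}}$ is exactly the hypothesis needed to handle \cref{gen-amalg-prod}.
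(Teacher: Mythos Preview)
Your proposal is correct and follows essentially the same approach as the paper: assemble the $\vf_i$ into a single map on $\bigsqcup A_i$, extend by freeness to $\cW_\V(\bigsqcup A_i)$, verify that both families of generators of $N$ lie in the kernel (using that the $\vf_i$ are homomorphisms for \cref{gen-mult-in-free-prod} and the compatibility hypothesis for \cref{gen-amalg-prod}), then factor through the quotient and conclude uniqueness from the fact that the $\nu_i(a)$ generate. Your write-up is slightly more explicit in unpacking the kernel check, but the argument is the same.
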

\begin{proof}
 The map $\phi:\bigsqcup A_i\to A$, $\phi|_{A_i}=\vf_i$, uniquely extends to a homomorphism $\bar\phi:\cW_\V(\bigsqcup A_i)\to\cA$. Clearly, the generators \cref{gen-mult-in-free-prod} belong to $\ker\bar\phi$, as $\vf_i$ are homomorphisms. Since
 $\bar\phi\circ\alpha_{ij}=\vf_j\circ\alpha_{ij}=\vf_i|_{A_{ij}}=\bar\phi|_{A_{ij}}$, the generators \cref{gen-amalg-prod} also belong to $\ker\bar\phi$. Hence, there exists $\psi:\prod^*_{\cA_{ij}=\cA_{ji}}\cA_i\to\cA$ satisfying $\psi\circ N^\nat=\bar\phi$, and it is uniquely defined on the generators $\nu_i(a)$ of $\prod^*_{\cA_{ij}=\cA_{ji}}\cA_i$ by $\psi\circ\nu_i(a)=\vf_i(a)$.
\end{proof}

\begin{lem}\label{emb-into-A<=>emb-into-amalg-prod}
 An amalgam $\bA$ is embeddable into a $\V$-algebra if and only if it is embeddable into $\prod^*_{\cA_{ij}=\cA_{ji}}\cA_i$.
\end{lem}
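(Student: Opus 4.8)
The plan is to use the universal property of the free product with amalgamation established in \cref{univ-amalg-prod}. The forward implication is free: the free product $\prod^*_{\cA_{ij}=\cA_{ji}}\cA_i$ is itself a $\V$-algebra, being a quotient of the free $\V$-algebra $\cW_\V(\bigsqcup A_i)$, so any embedding into it is in particular an embedding into a $\V$-algebra. Hence everything rests on the converse: I must show that if $\bA$ embeds into \emph{some} $\cA\in\V$, then the canonical maps $\nu_i\colon\cA_i\to\prod^*_{\cA_{ij}=\cA_{ji}}\cA_i$ already constitute an embedding.

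To this end, suppose $\bA$ embeds into $\cA\in\V$ via injective morphisms $\vf_i\colon\cA_i\to\cA$ with $\vf_j\circ\alpha_{ij}=\vf_i|_{A_{ij}}$ and $\vf_i(A_i)\cap\vf_j(A_j)=\vf_i(A_{ij})$. Since the $\vf_i$ satisfy the compatibility condition $\vf_j\circ\alpha_{ij}=\vf_i|_{A_{ij}}$, \cref{univ-amalg-prod} supplies a homomorphism $\psi\colon\prod^*_{\cA_{ij}=\cA_{ji}}\cA_i\to\cA$ with $\psi\circ\nu_i=\vf_i$. The first consequence I would draw is that each $\nu_i$ is injective, because $\vf_i=\psi\circ\nu_i$ is injective; the identity $\nu_j\circ\alpha_{ij}=\nu_i|_{A_{ij}}$ is already recorded before \cref{univ-amalg-prod}.

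It then remains to verify the intersection condition $\nu_i(A_i)\cap\nu_j(A_j)=\nu_i(A_{ij})$ of \cref{embed-defn}. The inclusion $\nu_i(A_{ij})\subseteq\nu_i(A_i)\cap\nu_j(A_j)$ is formal: for $a\in A_{ij}$ one has $\nu_i(a)\in\nu_i(A_i)$ trivially and $\nu_i(a)=\nu_j(\alpha_{ij}(a))\in\nu_j(A_j)$. For the reverse inclusion I would take $p\in\nu_i(A_i)\cap\nu_j(A_j)$, write $p=\nu_i(a)=\nu_j(b)$ with $a\in A_i$ and $b\in A_j$, and apply $\psi$ to obtain $\vf_i(a)=\vf_j(b)$. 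This element lies in $\vf_i(A_i)\cap\vf_j(A_j)=\vf_i(A_{ij})$, so $\vf_i(a)=\vf_i(c)$ for some $c\in A_{ij}$; injectivity of $\vf_i$ forces $a=c\in A_{ij}$, whence $p=\nu_i(a)\in\nu_i(A_{ij})$. Using that $\alpha_{ij}\colon A_{ij}\to A_{ji}$ is a bijection, one also gets $\nu_i(A_{ij})=\nu_j(A_{ji})$, matching the parenthetical in \cref{embed-defn}.

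The argument is essentially a transport of structure along the universal map $\psi$, so I do not expect a genuine obstacle. The only points requiring care are the confirmation that the free product lies in $\V$ (immediate, as it is a quotient of a free $\V$-algebra) and the set-theoretic bookkeeping in the intersection identity, where injectivity of the $\vf_i$ is the essential ingredient that lets the intersection property downstairs (in $\cA$) be pulled back upstairs (in the free product).
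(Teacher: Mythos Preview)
Your proposal is correct and follows essentially the same route as the paper: use \cref{univ-amalg-prod} to obtain $\psi$ with $\psi\circ\nu_i=\vf_i$, deduce injectivity of the $\nu_i$ from that of the $\vf_i$, and pull back the intersection condition along $\psi$. The paper's proof is slightly terser but the logic is identical.
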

\begin{proof}
 The ``if'' part is trivial. For the ``only if'' part suppose that there are morphisms $\vf_i:\cA_i\to\cA$, which determine an embedding of $\bA$ into $\cA$. By \cref{univ-amalg-prod} there exists $\psi:\prod^*_{\cA_{ij}=\cA_{ji}}\cA_i\to\cA$, such that $\psi\circ\nu_i=\vf_i$. Since $\vf_i$ is injective, it follows that $\nu_i$ is also injective. 
 
 Obviously, $\nu_i(A_{ij})=\nu_j(A_{ji})\subseteq\nu_i(A_i)\cap\nu_j(A_j)$. Now if $a=\nu_i(b)=\nu_j(c)\in\nu_i(A_i)\cap\nu_j(A_j)$ for $b\in A_i$ and $c\in A_j$, then $\psi(a)=\vf_i(b)=\vf_j(c)\in\vf_i(A_i)\cap\vf_j(A_j)=\vf_i(A_{ij})$. It follows from the injectivity of $\vf_i$ that $b\in A_{ij}$. Hence, $a\in\nu_i(A_{ij})$, and thus $\nu_i(A_i)\cap\nu_j(A_j)=\nu_i(A_{ij})$.
\end{proof}

\begin{prop}\label{A-emb-in-terms-of-N}
 An amalgam $\bA$ is embeddable into a $\V$-algebra if and only if
 \begin{align}\label{(ab)-in-N=>a-in-A_(ij)-and-alpha_(ij)(a)=b}
  (a,b)\in N\impl a\in A_{ij}\ \&\ \alpha_{ij}(a)=b
 \end{align}
 for all $a\in A_i$ and $b\in A_j$ seen as elements of $W_\V(\bigsqcup A_i)$.
\end{prop}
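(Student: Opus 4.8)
The plan is to combine \cref{emb-into-A<=>emb-into-amalg-prod} with a direct analysis of the canonical maps $\nu_i:\cA_i\to\prod^*_{\cA_{ij}=\cA_{ji}}\cA_i$. By that lemma, $\bA$ is embeddable into a $\V$-algebra if and only if it is embeddable into $\prod^*_{\cA_{ij}=\cA_{ji}}\cA_i$; moreover, the ``only if'' part of its proof shows that whenever an embedding exists, the canonical maps $\nu_i$ themselves furnish one. Since $\prod^*_{\cA_{ij}=\cA_{ji}}\cA_i\in\V$, I conclude that $\bA$ is embeddable into a $\V$-algebra precisely when the family $\{\nu_i\}$ is an embedding in the sense of \cref{embed-defn}. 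As the relation $\nu_j\circ\alpha_{ij}=\nu_i|_{A_{ij}}$ always holds, being an embedding reduces to two conditions: (a) each $\nu_i$ is injective, and (b) $\nu_i(A_i)\cap\nu_j(A_j)=\nu_i(A_{ij})$ for all $i,j$. The whole argument then rests on the tautology $\nu_i(a)=\nu_j(b)\iff(a,b)\in N$, valid for generators $a\in A_i$, $b\in A_j$ of $W_\V(\bigsqcup A_i)$.

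First I would show that \cref{(ab)-in-N=>a-in-A_(ij)-and-alpha_(ij)(a)=b} implies (a) and (b). For injectivity, take $a,a'\in A_i$ with $(a,a')\in N$ and apply the condition with $j=i$: it gives $\alpha_{ii}(a)=a'$, and since $\alpha_{ii}=\id$ we get $a=a'$. For (b), the inclusion $\nu_i(A_{ij})\subseteq\nu_i(A_i)\cap\nu_j(A_j)$ is automatic from $\nu_j\circ\alpha_{ij}=\nu_i|_{A_{ij}}$, as already noted after \cref{amalg-free-prod-defn}; conversely, any common value $\nu_i(a)=\nu_j(b)$ satisfies $(a,b)\in N$, so the condition forces $a\in A_{ij}$ and hence $\nu_i(a)\in\nu_i(A_{ij})$.

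Conversely, assuming (a) and (b), I would derive \cref{(ab)-in-N=>a-in-A_(ij)-and-alpha_(ij)(a)=b}. Given $a\in A_i$, $b\in A_j$ with $(a,b)\in N$, the element $\nu_i(a)=\nu_j(b)$ lies in $\nu_i(A_i)\cap\nu_j(A_j)=\nu_i(A_{ij})$ by (b), so $\nu_i(a)=\nu_i(c)$ for some $c\in A_{ij}$; injectivity of $\nu_i$ gives $a=c\in A_{ij}$. Then $\nu_j(\alpha_{ij}(a))=\nu_i(a)=\nu_j(b)$, and injectivity of $\nu_j$ yields $\alpha_{ij}(a)=b$, as required.

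The one point needing care---and the only place where I expect to pause---is the reduction in the first paragraph: I must make sure that embeddability of $\bA$ into the free product can be detected through the canonical maps $\nu_i$ rather than through an arbitrary family of injections. This is exactly what the proof of \cref{emb-into-A<=>emb-into-amalg-prod} delivers, via the universal factorization $\psi\circ\nu_i=\vf_i$ of \cref{univ-amalg-prod} which forces each $\nu_i$ injective, so no new work is needed there; everything else is a routine translation through the equivalence $\nu_i(a)=\nu_j(b)\iff(a,b)\in N$.
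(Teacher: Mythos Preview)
Your proposal is correct and follows essentially the same route as the paper: both reduce, via \cref{emb-into-A<=>emb-into-amalg-prod}, to showing that \cref{(ab)-in-N=>a-in-A_(ij)-and-alpha_(ij)(a)=b} is equivalent to the canonical maps $\nu_i$ being injective with $\nu_i(A_i)\cap\nu_j(A_j)=\nu_i(A_{ij})$, and then verify each direction using the identity $\nu_i(a)=\nu_j(b)\iff(a,b)\in N$ together with $\alpha_{ii}=\id$. The paper organizes the equivalence by first isolating the $i=j$ case (injectivity) and then the $i\ne j$ case (intersection condition), but the substance is the same as yours.
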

\begin{proof}
 By \cref{emb-into-A<=>emb-into-amalg-prod} the amalgam $\bA$ is embeddable if and only if $\nu_i:\cA_i\to\prod^*_{\cA_{ij}=\cA_{ji}}\cA_i$ is injective and $\nu_i(A_i)\cap\nu_j(A_j)=\nu_i(A_{ij})$. One immediately sees that the injectivity of $\nu_i$ is equivalent to \cref{(ab)-in-N=>a-in-A_(ij)-and-alpha_(ij)(a)=b} with $i=j$.
 
 Now, assuming injectivity of $\nu_i$ for all $i$, we show that for $i\ne j$ the implication \cref{(ab)-in-N=>a-in-A_(ij)-and-alpha_(ij)(a)=b} is equivalent to the inclusion $\nu_i(A_i)\cap\nu_j(A_j)\subseteq\nu_i(A_{ij})$ (the converse inclusion is always true). Let $\nu_i(A_i)\cap\nu_j(A_j)\subseteq\nu_i(A_{ij})$. If $a\in A_i$, $b\in A_j$ and $(a,b)\in N$, then $\nu_i(a)=\nu_j(b)$. It follows that $\nu_i(a)\in\nu_i(A_{ij})$, and hence $a\in A_{ij}$ in view of the injectivity of $\nu_i$. Now $\nu_i(a)=\nu_j(\alpha_{ij}(a))$, so we conclude from the injectivity of $\nu_j$ that $\alpha_{ij}(a)=b$. Conversely, suppose that \cref{(ab)-in-N=>a-in-A_(ij)-and-alpha_(ij)(a)=b} holds. For any $c\in\nu_i(A_i)\cap\nu_j(A_j)$ there are $a\in A_i$ and $b\in A_j$ such that $c=\nu_i(a)=\nu_j(b)$. Then $(a,b)\in N$, so $a\in A_{ij}$ by \cref{(ab)-in-N=>a-in-A_(ij)-and-alpha_(ij)(a)=b}. Consequently, $c\in\nu_i(A_{ij})$.
\end{proof}

With any $(\0,\cA)\in\pA(G,\V)$ we shall associate the amalgam 
$$
 \bA(\0,\cA)=[\{\cA_x\}_{x\in G},\{\cA_{x,y}\}_{x,y\in G},\{\alpha_{x,y}\}_{x,y\in G}],
$$
where $\cA_x$ is a copy of $\cA$, $\cA_{x,y}=\cD_{x\m y}$ and $\alpha_{x,y}=\0_{y\m x}$ for all $x,y\in G$.

\begin{prop}\label{free-amalg-prod-isom-refl}
 Given $(\0,\cA)\in\pA(G,\V)$, the algebra $\cW_\V(\cA^U)$ is isomorphic to $\prod^*_{\cA_{x,y}=\cA_{y,x}}\cA_x$.
\end{prop}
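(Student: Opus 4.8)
The plan is to realise both algebras as quotients of the single free $\V$-algebra $\cW_\V(\bigsqcup A_x)$ and then to match the two defining congruences. Identifying each $\cA_x$ with $\cA$, let $\pi\colon\bigsqcup A_x\to A^U$ be the set map sending $a\in A_x$ to $[x,a]$; it is surjective, and its canonical extension $\bar\pi\colon\cW_\V(\bigsqcup A_x)\to\cW_\V(A^U)$ is a surjective homomorphism. Write $\rho$ and $\s$ for the two families \cref{gen-amalg-prod} and \cref{gen-mult-in-free-prod}, so that $E:=\rho^*$, $\s^*$ and $N=(\rho\cup\s)^*$. Everything then reduces to computing $\ker\bar\pi$ together with the $\bar\pi$-images of $\rho$ and $\s$.

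First I would show that $\ker\bar\pi=E$. The key point is that the kernel pair of $\pi$ on the generating set is \emph{already} the family $\rho=\cref{gen-amalg-prod}$: by the construction of $A^U$ one has $[x,a]=[y,b]$ if and only if $\emp\ne(y\m x)a=b$, i.e.\ $a\in D_{x\m y}=A_{x,y}$ and $b=\0_{y\m x}(a)=\alpha_{x,y}(a)$, which is exactly a pair occurring in \cref{gen-amalg-prod}. No iteration is needed, because $\sim$ is realised in a single step by the group element $y\m x$. Since $\cW_\V$ is the free functor, the kernel of the induced map $\bar\pi$ is the congruence generated by this kernel pair; hence $\ker\bar\pi=E$, and $\bar\pi$ factors through an isomorphism $\bar\iota\colon\cW_\V(\bigsqcup A_x)/E\xrightarrow{\sim}\cW_\V(A^U)$ with $\bar\iota\circ E^\nat=\bar\pi$.

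Next I would evaluate $\bar\pi$ on the multiplication generators $\s=\cref{gen-mult-in-free-prod}$. As each $\cA_x$ is a total algebra isomorphic to $\cA$, the element $(f_\g)_{\cA_x}(a_1,\dots,a_{n_\g})$ corresponds to $(f_\g)_\cA(a_1,\dots,a_{n_\g})\in A$, so $\bar\pi$ sends the corresponding pair to
$$
 \bigl((f_\g)_{\cW_\V(A^U)}([x,a_1],\dots,[x,a_{n_\g}]),\,[x,(f_\g)_\cA(a_1,\dots,a_{n_\g})]\bigr).
$$
By the explicit description of $\cA^U$ in \cref{ex-glob-pA(GpAl(T))<=>R_A^U-funct} the second entry is precisely $(f_\g)_{\cA^U}([x,a_1],\dots,[x,a_{n_\g}])$, and every defined value of a partial operation of $\cA^U$ is attained at arguments sharing a common representative $[x,-]$. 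Hence, as $x$ and the $a_i$ range, these images run over exactly the defining family \cref{gen-of-Th_Sg} of $\Th_\V$, so that $\bar\pi(\s)^*=\Th_\V$.

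Finally I would assemble the pieces. Applying \cref{(A/P)/P^nat(sigma)^*} with the above $\rho$ and $\s$ (so that $P=\rho^*=E$ and $(\rho\cup\s)^*=N$) gives
$$
 \prod^*_{\cA_{x,y}=\cA_{y,x}}\cA_x=\cW_\V(\bigsqcup A_x)/N\cong\bigl(\cW_\V(\bigsqcup A_x)/E\bigr)\big/E^\nat(\s)^*.
$$
Transporting the right-hand congruence along the isomorphism $\bar\iota$ and using $\bar\iota(E^\nat(\s)^*)=\bar\pi(\s)^*=\Th_\V$ identifies this quotient with $\cW_\V(A^U)/\Th_\V=\cW_\V(\cA^U)$, which is the asserted isomorphism. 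I expect the main obstacle to lie in the second step: one must check carefully that, thanks to the group action, the naive identifications \cref{gen-amalg-prod} already generate the \emph{entire} kernel pair of $\pi$ (so that $\ker\bar\pi$ contains no relations beyond $E$), and that this kernel dovetails exactly with the partial operations of $\cA^U$, leaving $\bar\pi(\s)$ to account precisely for the generators of $\Th_\V$.
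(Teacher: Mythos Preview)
Your argument is correct and takes a genuinely different route from the paper. The paper builds explicit mutually inverse homomorphisms: it defines $\bar\psi:\cW_\V(\cA^U)\to\prod^*_{\cA_{x,y}=\cA_{y,x}}\cA_x$ on generators by $[x,a]_{\Th_\V}\mapsto[a_x]_N$, checks by hand that the relations \cref{gen-of-Th_Sg} are sent into $N$, then constructs the inverse $\bar\eta$ by $[a_x]_N\mapsto[x,a]_{\Th_\V}$, again verifying that both families \cref{gen-mult-in-free-prod} and \cref{gen-amalg-prod} land in $\Th_\V$, and finishes by observing that the two maps are mutual inverses on generators. Your proof instead realises both algebras as successive quotients of the single free object $\cW_\V(\bigsqcup A_x)$: you identify $\ker\bar\pi$ with $\rho^*$ (using that the free functor $\cW_\V$ preserves the coequalizer presenting $A^U$, and that the equivalence $\sim$ on $G\times A$ needs no transitive closure), identify $\bar\pi(\s)$ with the generating family of $\Th_\V$ via the description of the partial operations on $\cA^U$, and then invoke the congruence calculus of \cref{(A/P)/P^nat(sigma)^*} to swap the order of the two quotients. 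What your approach buys is economy and the reuse of machinery already set up in \cref{pA(GAl(TSigma))}; nothing needs to be verified twice. What the paper's approach buys is an explicit formula for the isomorphism on generators, which is exactly what gets used in the proof of \cref{refl-glob<=>amalg-emb}. The one place where you might tighten the exposition is the sentence ``Since $\cW_\V$ is the free functor, the kernel of the induced map $\bar\pi$ is the congruence generated by this kernel pair'': this is true, but a reader may appreciate the one-line reason (left adjoints preserve coequalizers, and $A^U$ is the coequalizer of its kernel pair in $\mathsf{Set}$).
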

\begin{proof}
 For any $a\in A$ denote by $a_x$ the copy of $a$ in $A_x$. Given $[x,a]\in A^U$, set $\psi([x,a])=[a_x]_N$. Observe that if $[x,a]=[y,b]$, i.\,e. $(y\m x)a=b$, then $a_x\in A_{x,y}$ and $\alpha_{x,y}(a_x)=b_y$. So, 
 $\psi([x,a])=\psi([y,b])$. Thus, $\psi$ is a well-defined map $A^U\to W_\V(\bigsqcup A_x)/N$, and hence it extends to a homomorphism $\cW_\V(A^U)\to\prod^*_{\cA_{x,y}=\cA_{y,x}}\cA_x$, which we denote by the same letter. Moreover,
\begin{align*}
 \psi((f_\g)_{\cW_\V(A^U)}([x,a_1],\dots,[x,a_{n_\g}]))&=[(f_\g)_{\cW_\V(\bigsqcup A_x)}((a_1)_x,\dots,(a_{n_\g})_x)]_N\\
 &=[(f_\g)_{\cA_x}((a_1)_x,\dots,(a_{n_\g})_x)]_N\\
 &=[(f_\g)_\cA(a_1,\dots,a_{n_\g})_x]_N\\
 &=\psi([x,(f_\g)_\cA(a_1,\dots,a_{n_\g})])\\
 &=\psi((f_\g)_{\cA^U}([x,a_1],\dots,[x,a_{n_\g}])).
\end{align*}
Therefore, $\psi$ induces a homomorphism $\bar\psi:\cW_\V(\cA^U)\to\prod^*_{\cA_{x,y}=\cA_{y,x}}\cA_x$, such that $\psi=\bar\psi\circ\Th_\V^\nat$. Note that $\bar\psi$ maps a generator $[x,a]_{\Th_\V}$ of $\cW_\V(\cA^U)$ to the generator $[a_x]_N$ of $\prod^*_{\cA_{x,y}=\cA_{y,x}}\cA_x$.

We now define $\eta:\cW_\V(\bigsqcup A_x)\to W_\V(\cA^U)$ by $\eta(a_x)=[x,a]_{\Th_\V}$. Observe that
\begin{align*}
 \eta((f_\g)_{\cW_\V(\bigsqcup A_x)}((a_1)_x,\dots,(a_{n_\g})_x))&=[(f_\g)_{\cW_\V(A^U)}([x,a_1],\dots,[x,a_{n_\g}])]_{\Th_\V}\\
 &=[(f_\g)_{\cA^U}([x,a_1],\dots,[x,a_{n_\g}])]_{\Th_\V}\\
 &=[x,(f_\g)_\cA(a_1,\dots,a_{n_\g})]_{\Th_\V}\\
 &=\eta((f_\g)_\cA(a_1,\dots,a_{n_\g})_x)\\
 &=\eta((f_\g)_{\cA_x}((a_1)_x,\dots,(a_{n_\g})_x)).
\end{align*}
Moreover, for $a_x\in A_{x,y}= D_{x\m y}$ one has $[x,a]=[y,(y\m x)a]$, so $\eta(a_x)=\eta(((y\m x)a)_y)=\eta(\alpha_{x,y}(a_x))$.
Thus, there exists a homomorphism $\bar\eta:\prod^*_{\cA_{x,y}=\cA_{y,x}}\cA_x\to\cW_\V(\cA^U)$ with $\eta=\bar\eta\circ N^\nat$. It maps a generator $[a_x]_N$ of $\prod^*_{\cA_{x,y}=\cA_{y,x}}\cA_x$ to the generator $[x,a]_{\Th_\V}$ of $\cW_\V(\cA^U)$.

It remains to note that the compositions $\bar\eta\circ\bar\psi$ and $\bar\psi\circ\bar\eta$ are identity on the generators.
\end{proof}

\begin{thrm}\label{refl-glob<=>amalg-emb}
 Let $(\0,\cA)\in\pA(G,\V)$. There exists a globalization of $(\0,\cA)$ in $\A(G,\V)$ if and only if $\bA(\0,\cA)$ is embeddable into a $\V$-algebra.
\end{thrm}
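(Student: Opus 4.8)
The plan is to prove the equivalence by showing that the globalizability criterion of \cref{glob-pA(GV)} and the embeddability criterion of \cref{A-emb-in-terms-of-N}, applied to the amalgam $\bA(\0,\cA)$, become one and the same statement once both are read inside the algebra $\cW_\V(\cA^U)$, which is identified with $\prod^*_{\cA_{x,y}=\cA_{y,x}}\cA_x$ by \cref{free-amalg-prod-isom-refl}.

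First I would write out \cref{A-emb-in-terms-of-N} explicitly for $\bA(\0,\cA)$. Here the index set is $G$, each $\cA_x$ is a copy of $\cA$, $A_{x,y}$ is the copy of $D_{x\m y}$ sitting inside $A_x$, and $\alpha_{x,y}=\0_{y\m x}$. Writing $a_x$ for the copy of $a\in A$ in $A_x$, the criterion asserts that for all $a_x\in A_x$ and $b_y\in A_y$,
\[
 (a_x,b_y)\in N\ \impl\ a_x\in A_{x,y}\ \&\ \alpha_{x,y}(a_x)=b_y .
\]
I would then unwind the consequent: $a_x\in A_{x,y}$ says $a\in D_{x\m y}$, i.e. $(y\m x)a\ne\emp$, and under this $\alpha_{x,y}(a_x)=b_y$ says $(y\m x)a=b$. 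Hence the consequent is equivalent to $\emp\ne(y\m x)a=b$, which is precisely the equality $[x,a]=[y,b]$ in $A^U$ by the definition of $\sim$.

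Next I would transport the antecedent across the isomorphism. By \cref{free-amalg-prod-isom-refl} the map $\bar\eta:\prod^*_{\cA_{x,y}=\cA_{y,x}}\cA_x\to\cW_\V(\cA^U)$ carries the generator $[a_x]_N$ to $[x,a]_{\Th_\V}$; being an isomorphism it is injective, so
\[
 (a_x,b_y)\in N\ \iff\ [a_x]_N=[b_y]_N\ \iff\ [x,a]_{\Th_\V}=[y,b]_{\Th_\V}\ \iff\ ([x,a],[y,b])\in\Th_\V .
\]
Substituting the last two displays into the embeddability criterion turns it into
\[
 ([x,a],[y,b])\in\Th_\V\ \impl\ [x,a]=[y,b],
\]
which is word for word the condition \cref{([xa][yb])-in-Th_Sg=>[xa]=[yb]} of \cref{glob-pA(GV)}. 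Therefore $(\0,\cA)$ is globalizable in $\A(G,\V)$ exactly when $\bA(\0,\cA)$ is embeddable into a $\V$-algebra.

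The only delicate part I anticipate is the bookkeeping in the first step: matching $A_{x,y}$ with $D_{x\m y}$ and $\alpha_{x,y}$ with $\0_{y\m x}$ while keeping the inverses straight (note $\0_{y\m x}:D_{x\m y}\to D_{y\m x}$), and confirming that the disjoint-union generators $a_x$, $b_y$ correspond under $\bar\eta$ to the generators $[x,a]$, $[y,b]$ of $W_\V(A^U)$. Once these identifications are fixed, the proof is a direct comparison of the two criteria.
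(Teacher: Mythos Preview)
Your proposal is correct and follows essentially the same route as the paper: both translate the embeddability criterion of \cref{A-emb-in-terms-of-N} for $\bA(\0,\cA)$ into the globalizability criterion \cref{([xa][yb])-in-Th_Sg=>[xa]=[yb]} of \cref{glob-pA(GV)} via the isomorphism of \cref{free-amalg-prod-isom-refl}, matching $(a_x,b_y)\in N$ with $([x,a],[y,b])\in\Th_\V$ and $a_x\in A_{x,y}\ \&\ \alpha_{x,y}(a_x)=b_y$ with $[x,a]=[y,b]$. Your bookkeeping is correct, and your explicit use of the injectivity of $\bar\eta$ to justify the equivalence of the antecedents is a nice touch that the paper leaves implicit.
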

\begin{proof}
 By \cref{glob-pA(GV)} the pair $(\0,\cA)$ is globalizable in $\A(G,\V)$ if and only if \cref{([xa][yb])-in-Th_Sg=>[xa]=[yb]} is satisfied. Observe using the isomorphism from \cref{free-amalg-prod-isom-refl} that
 $$
  ([x,a],[y,b])\in\Th_\V\iff(a_x,b_y)\in N.
 $$
 Moreover, 
 $$
  [x,a]=[y,b]\iff a\in D_{x\m y}\ \&\ (y\m x)a=b\iff a_x\in A_{x,y}\ \&\ \alpha_{x,y}(a_x)=b_y. 
 $$
 Thus, \cref{([xa][yb])-in-Th_Sg=>[xa]=[yb]} is equivalent to \cref{(ab)-in-N=>a-in-A_(ij)-and-alpha_(ij)(a)=b}, the latter being a criterion of embeddability of $\bA(\0,\cA)$ into a $\V$-algebra by \cref{A-emb-in-terms-of-N}.
\end{proof}

\begin{cor}\label{variety-with-AP}
 Suppose that $\V$ has the amalgamation property~\cite[p. 395]{Gratzer} (for example, this holds for groups, inverse semigroups, but does not hold for semigroups). Then any partial action of a group of order 2 on a $\V$-algebra admits a globalization.
\end{cor}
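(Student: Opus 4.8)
The plan is to reduce the claim, via \cref{refl-glob<=>amalg-emb}, to the embeddability of a single ordinary two-term amalgam, and then to apply the amalgamation property directly. So let $G=\{1,x\}$ with $x^2=1$ (whence $x\m=x$) and let $(\0,\cA)\in\pA(G,\V)$; by \cref{refl-glob<=>amalg-emb} it suffices to show that $\bA(\0,\cA)$ is embeddable into a $\V$-algebra.

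First I would unwind $\bA(\0,\cA)=[\{\cA_y\}_{y\in G},\{\cA_{y,z}\}_{y,z\in G},\{\alpha_{y,z}\}_{y,z\in G}]$ for this $G$. Since the index set is $G$, there are exactly two constituent algebras $\cA_1$ and $\cA_x$, each a copy of $\cA$. The diagonal pairs give $\cA_{1,1}=\cA_{x,x}=\cD_1=\cA$ with $\alpha_{y,y}=\id$ and impose no condition, while the off-diagonal data reduce to $\cA_{1,x}=\cD_x$, $\cA_{x,1}=\cD_{x\m}=\cD_x$ and $\alpha_{1,x}=\0_{x\m}=\0_x$. Here $\0_x$ is an involution of $\cD_x$, since $\0_x\circ\0_x=\0_{x^2}=\0_1=\id$. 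Thus $\bA(\0,\cA)$ is precisely the amalgam of two copies of $\cA$ glued along $\cD_x$ through $\0_x$, that is, a classical $2$-amalgam.

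Next I would feed this span into the amalgamation property: viewing $\cD_x$ as a common subalgebra through the embeddings $\cD_x\hookrightarrow\cA_1$ (the inclusion) and $\cD_x\xrightarrow{\0_x}\cD_x\hookrightarrow\cA_x$, I would obtain a $\V$-algebra $\mathcal C$ together with injective morphisms $\vf_1:\cA_1\to\mathcal C$ and $\vf_x:\cA_x\to\mathcal C$ satisfying $\vf_x\circ\0_x=\vf_1|_{D_x}$ and $\vf_1(A_1)\cap\vf_x(A_x)=\vf_1(D_x)$. Since the diagonal pairs contribute no constraints, these $\vf_1,\vf_x$ are exactly the data demanded by \cref{embed-defn}, so $\bA(\0,\cA)$ embeds into $\mathcal C$; by \cref{refl-glob<=>amalg-emb} the pair $(\0,\cA)$ then admits a globalization in $\A(G,\V)$.

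The step I expect to require the most care is the intersection clause $\vf_1(A_1)\cap\vf_x(A_x)=\vf_1(D_x)$ of \cref{embed-defn}: it is this strong form of embeddability that must be extracted from the amalgamation property for the two-term span, and it is exactly what fails in the absence of the property — compare the non-globalizable partial action of a group of order $2$ on a semigroup exhibited in \cref{non-inj-refl-morph}. I would also stress that the hypothesis $|G|=2$ is essential to the method: for a larger group the index set $G$ yields three or more constituent algebras, so $\bA(\0,\cA)$ becomes a genuinely generalized amalgam, to which the ordinary amalgamation property — a statement about two-term spans only — no longer applies.
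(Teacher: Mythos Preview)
Your proposal is correct and is precisely the argument the paper has in mind: the corollary is stated without proof because it follows immediately from \cref{refl-glob<=>amalg-emb} once one observes that for $|G|=2$ the generalized amalgam $\bA(\0,\cA)$ collapses to an ordinary two-term amalgam, to which the amalgamation property applies directly. Your unwinding of the indices and your remark that the intersection clause of \cref{embed-defn} is exactly what the (strong) amalgamation property supplies are both on target.
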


\begin{cor}\label{D_x=emp}
 Let $\V$ be a variety and $\cA\in\V$ be retractable~\cite[p. 143]{Cohn}. Consider $(\0,\cA)\in\pA(G,\V)$, such that $\cD_x$ is the minimal subalgebra of $\cA$ for all $x\ne 1$. Then $(\0,\cA)$ has a globalization in $\A(G,\V)$, which is an action of $G$ on the free product $\prod^*_{x\in G}\cA_x$ of copies of $\cA$.\footnote{See also~\cite[Example 3.5]{DRS}.}
\end{cor}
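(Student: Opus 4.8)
The plan is to invoke the embeddability criterion \cref{refl-glob<=>amalg-emb}: it suffices to prove that the associated amalgam $\bA(\0,\cA)$ is embeddable into a $\V$-algebra, and then to read off the globalization from \cref{free-amalg-prod-isom-refl}. First I would describe this amalgam concretely. Write $\cM$ for the minimal subalgebra of $\cA$. For distinct $x,y\in G$ we have $x\m y\ne 1$, so $\cA_{x,y}=\cD_{x\m y}=\cM$ and $\cA_{y,x}=\cD_{y\m x}=\cM$, and the gluing map $\alpha_{x,y}=\0_{y\m x}$ is an isomorphism $\cM\to\cM$. Since $\cM$ is generated by the nullary operations and every homomorphism fixes the constants, the only endomorphism of $\cM$ is the identity, so $\alpha_{x,y}=\id_\cM$. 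Thus $\bA(\0,\cA)$ is simply the amalgam of the copies $\cA_x$ of $\cA$, glued along their common minimal subalgebra $\cM$ by the identity. Consequently its amalgamated free product $\prod^*_{\cA_{x,y}=\cA_{y,x}}\cA_x$ coincides with the free product $\prod^*_{x\in G}\cA_x$ of the copies: the relations \cref{gen-amalg-prod} identifying $a_x$ with $a_y$ for $a\in\cM$ are already forced by the nullary-operation relations in \cref{gen-mult-in-free-prod}.

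The core of the argument is to verify embeddability into $\cP:=\prod^*_{x\in G}\cA_x$ via the canonical maps $\nu_x:\cA_x\to\cP$; by \cref{emb-into-A<=>emb-into-amalg-prod,A-emb-in-terms-of-N} this amounts to showing that each $\nu_x$ is injective and that $\nu_x(A_x)\cap\nu_y(A_y)=\nu_x(\cM)$ for $x\ne y$. This is where retractability enters. Let $r:\cA\to\cM$ be a retraction onto the minimal subalgebra and $\iota:\cM\hookrightarrow\cA$ the inclusion. Fixing $x_0\in G$, I would set $\vf_{x_0}=\id_\cA$ and $\vf_z=\iota\circ r$ for $z\ne x_0$. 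Each of these restricts to $\iota$ on $\cM$, so the compatibility $\vf_w\circ\alpha_{z,w}=\vf_z|_{A_{z,w}}$ holds for all $z,w$, and \cref{univ-amalg-prod} yields a homomorphism $\psi_{x_0}:\cP\to\cA$ with $\psi_{x_0}\circ\nu_{x_0}=\id_\cA$ and $\psi_{x_0}\circ\nu_z=\iota\circ r$ for $z\ne x_0$. The left inverse $\psi_{x_0}$ gives injectivity of $\nu_{x_0}$. For the intersection, if $c=\nu_x(a)=\nu_y(b)$ with $x\ne y$, then applying $\psi_x$ gives $a=\iota r(b)\in\cM$, whence $c=\nu_x(a)\in\nu_x(\cM)$; the reverse inclusion is automatic. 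Hence $\bA(\0,\cA)$ is embeddable, and \cref{refl-glob<=>amalg-emb} shows that $(\0,\cA)$ is globalizable in $\A(G,\V)$.

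It remains to identify the globalization. By \cref{glob-pA(GV)} the universal globalization is the reflector $(\bth^\U_\V,\cW_\V(\cA^U))$, and by \cref{free-amalg-prod-isom-refl} together with the identification of the previous paragraph one has $\cW_\V(\cA^U)\cong\prod^*_{\cA_{x,y}=\cA_{y,x}}\cA_x=\prod^*_{x\in G}\cA_x$. Transporting $\bth^\U_\V$ along this isomorphism (under which $[x,a]_{\Th_\V}$ corresponds to $[a_x]_N$), the induced $G$-action sends $[a_x]_N$ to $[a_{zx}]_N$, i.e. $G$ permutes the copies $\cA_x$ by left translation of the index. This exhibits the globalization as an action of $G$ on the free product $\prod^*_{x\in G}\cA_x$ of copies of $\cA$, as claimed.

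The main obstacle I anticipate is the intersection condition $\nu_x(A_x)\cap\nu_y(A_y)=\nu_x(\cM)$, which is precisely where retractability is indispensable: without a retraction onto $\cM$ there is no way to separate distinct copies inside the free product, and their images could overlap in more than the minimal subalgebra, destroying embeddability. The injectivity of the $\nu_x$ and the translation to the amalgam language are comparatively routine once the retraction is available.
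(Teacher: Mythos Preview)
Your argument is correct. The paper states this corollary without proof, placing it immediately after \cref{refl-glob<=>amalg-emb}; your derivation via that theorem---showing that the associated amalgam reduces to copies of $\cA$ glued along the minimal subalgebra by the identity, using the retraction to build separating homomorphisms $\psi_{x_0}$ that certify both injectivity of the $\nu_x$ and the intersection condition, and then reading off the free-product description from \cref{free-amalg-prod-isom-refl}---is exactly the intended route and is carried out cleanly.
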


In what follows $\grp$ denotes the variety of groups.

\begin{rem}\label{necess-cond-of-amalg-of-gr}
 Given $(\0,\cH)\in\pA(G,\grp)$, the amalgam $\bA(\0,\cH)$ satisfies the necessary conditions 3.0 from~\cite{HNeumann48}: if $H_{x,y,z}:=H_{x,y}\cap H_{x,z}$, then $\alpha_{x,y}(H_{x,y,z})=H_{y,z,x}$ and for any $h\in H_{x,y,z}$ one has $\alpha_{y,z}\circ\alpha_{x,y}(h)=\alpha_{x,z}(h)$.
\end{rem}
\noindent Indeed, by (ii) of~\cite[Definition 1.2]{E1} we have $y\m x( D_{x\m y}\cap D_{x\m z})= D_{y\m z}\cap D_{y\m x}$, and $z\m y(y\m x\cdot h)=z\m x\cdot h$ thanks to \cref{(xy)a=x(ya)} of \cref{part-act-defn}.

\begin{cor}\label{suff-cond-glob-grp}
 Let $(\0,\cH)\in\pA(G,\grp)$. Each one of the following conditions is sufficient for $(\0,\cH)$ to have a globalization in $\A(G,\grp)$:
 \begin{enumerate}
  \item $\cH$ is locally cyclic;\label{H-loc-cycl}
  \item $|G|=3$ and $\cH$ is abelian.\label{|G|=3}
 \end{enumerate}
\end{cor}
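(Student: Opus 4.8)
The plan is to reduce both items to an embeddability question for the associated amalgam and then feed in the classical amalgamation theory of B.~H. and H.~Neumann. By \cref{refl-glob<=>amalg-emb}, $(\0,\cH)$ admits a globalization in $\A(G,\grp)$ if and only if the amalgam $\bA(\0,\cH)$, whose factors $\cH_x$ are copies of $\cH$ and whose amalgamated subgroups are $\cH_{x,y}=\cD_{x\m y}$, is embeddable into a group. By \cref{necess-cond-of-amalg-of-gr} this amalgam automatically satisfies the necessary conditions~3.0 of~\cite{HNeumann48}. Thus in each case it remains only to supply the extra structural hypothesis that turns 3.0 into a sufficient condition, and the two items correspond to the two classical regimes in which this is known to happen.

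For \cref{H-loc-cycl} I would exploit that $\cH$ is abelian and, being locally cyclic, has a distributive lattice of subgroups (Ore's theorem); in particular the amalgamated subgroups $\cD_{x\m y}$, which all sit inside the distributive subgroup lattice of $\cH$, generate a distributive sublattice. I would then invoke the embeddability theorem for amalgams whose amalgamated subgroups form a distributive lattice and satisfy 3.0, which is valid for an arbitrary index set and so imposes no restriction on $|G|$; the relevant statements are in Hanna Neumann's series \cite{HNeumann49,HNeumann50,HNeumann51}. Concretely, distributivity is what lets one control the pairwise meets $\cD_{x\m y}\cap\cD_{x\m z}$ appearing in 3.0 and rule out any collapse, so that the criterion \cref{(ab)-in-N=>a-in-A_(ij)-and-alpha_(ij)(a)=b} of \cref{A-emb-in-terms-of-N} holds for $\prod^*_{\cH_{x,y}=\cH_{y,x}}\cH_x$.

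For \cref{|G|=3}, here $G$ has exactly three elements, so $\bA(\0,\cH)$ is a triangle amalgam of three copies of $\cH$. The obstruction to embedding a three-group amalgam satisfying 3.0 disappears as soon as, in each factor, the two amalgamated subgroups are permutable; since $\cH$ is abelian, any two of its subgroups permute, so this is automatic. I would therefore apply the three-group embeddability result of B.~H. Neumann (\cite{BNeumann-HNeumann53}, see also the essay \cite{BNeumann54}) to conclude that $\bA(\0,\cH)$ embeds, and hence, by \cref{refl-glob<=>amalg-emb}, that $(\0,\cH)$ is globalizable.

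The main obstacle is the embeddability input rather than the reduction: one must check that, in each regime, the necessary conditions 3.0 together with the extra hypothesis (distributivity of the subgroup lattice in \cref{H-loc-cycl}; exactly three permutable factors in \cref{|G|=3}) really do force the natural maps $\cH_x\to\prod^*_{\cH_{x,y}=\cH_{y,x}}\cH_x$ to be injective with $\cH_x\cap\cH_y=\cH_{x,y}$. Equivalently, by \cref{glob-pA(GV)}, one must exclude any hidden identification of distinct generators $[x,a]$ in the free product, as in criterion \cref{([xa][yb])-in-Th_Sg=>[xa]=[yb]}; the distributive and low-rank hypotheses are precisely the features that prevent such identifications, and the bulk of the work is in matching the Neumann hypotheses to the present set-up.
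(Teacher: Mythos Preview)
Your proposal is correct and follows essentially the same strategy as the paper: reduce via \cref{refl-glob<=>amalg-emb} to embeddability of $\bA(\0,\cH)$ and then invoke the Neumann amalgamation results. The paper is slightly more direct in its citations, appealing to \cite[Theorem~6.1]{HNeumann50} (every amalgam of locally cyclic groups is embeddable) for \cref{H-loc-cycl} and to \cite[9.0]{HNeumann51} (every amalgam of three abelian groups is embeddable) for \cref{|G|=3}, rather than passing through Ore's theorem or the permutability criterion; your intermediate reductions are valid but not needed, since the Neumann papers already isolate the locally-cyclic and three-abelian cases as standalone theorems.
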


\noindent Indeed, \cref{H-loc-cycl} follows from~\cite[Theorem~6.1]{HNeumann50} saying that an amalgam of locally cyclic groups is embeddable into a group (and even into an abelian group); \cref{|G|=3} is explained by the fact that an amalgam of three abelian groups is embeddable into an (abelian) group (see~\cite[9.0]{HNeumann51}).\\

The following example shows that the result of \cref{|G|=3} of \cref{suff-cond-glob-grp} cannot be extended to non-abelian $\cH$.
\begin{exm}\label{non-glob-pa-of-Z_3}
Let $G=\la x\mid x^3=1\ra$, $\cH=\la a,b\mid a\m ba=b^2\ra$ and $\0_x:\la a\ra\to\la b\ra$, $\0_x(a)=b$. Then $(\0,\cH)\in\pA(G,\grp)$ and $(\0,\cH)$ does not have a globalization in $\A(G,\grp)$. Indeed, $\bA(\0,\cH)$ is the amalgam from~\cite[p. 549]{BNeumann54}, whose free product with amalgamated subgroups is the trivial group.
\end{exm}

\begin{rem}\label{pa-of-Z_3-in-pA_InvSem}
 Note that the partial action $(\0,\cH)$ from \cref{non-glob-pa-of-Z_3} has no globalization in $\A(G,\sem)$ as well, since otherwise its amalgam $\bA(\0,\cH)$ would be embeddable in the (semigroup) amalgamating product $\prod^*_{\cH_{x,y}=\cH_{y,x}}\cH_x$, which is in fact a group, so $(\0,\cH)$ would be globalizable in $\A(G,\grp)$. In particular, this gives us an example of a non-globalizable partial action of a group of order 3 on an inverse semigroup.
\end{rem}


\section{Partial actions on semigroups whose domains are ideals}\label{D_x-ideal}

Let $\alg$ be the variety of (associative, not necessarily unital) algebras over a fixed field. In~\cite{DE} the authors considered $(\0,\cA)\in\pA(G,\alg)$ with $D_x$ being an ideal in $\cA$ for all $x\in G$. By an {\it enveloping action of $(\0,\cA)$} was meant $(\vt,\cB)\in\A(G,\alg)$ with an injective morphism $\iota:(\theta,\cA)\to(\vt,\cB)$, such that $\iota(\cA)$ is an ideal in $\cB$ and $\bigcup_{x\in G}x\iota(\cA)$ generates $\cB$. Theorem~4.5 from~\cite{DE} says that $(\0,\cA)$ with unital $\cA$ admits an enveloping action exactly when the ideals $\cD_x$ are unital algebras. 

The above result extends to partial actions on (left) $s$-unital rings (this means that $a\in\cA a$ for all $a\in\cA$). If all $\cD_x$ are $s$-unital and some extra condition on multipliers of $\cA$ holds, then a globalization exists and it is unique (\cite[Theorem 3.1]{DRS}). Observe that an $s$-unital ring $\cA$ is idempotent in the sense that $\cA^2=\cA$, so one could ask if there is an analogue of the globalizations theorems for idempotent rings. We shall give an answer to the question in the context of partial actions on semigroups.

Another class of rings, partial actions on which have been well studied, are semiprime rings. Recall that $\cA$ is semiprime, whenever $aAa=0\impl a=0$ for all $a\in A$. Any partial action of $G$ on a semiprime ring $\cA$, whose domains are nonzero ideals of $\cA$, admits a globalization, as it was proved in~\cite[Theorem 1.6]{Ferrero06}. Observe that the multiplicative semigroup of a semiprime ring is reductive, in particular, it is weakly reductive (see~\cite[pp. 9--11]{Clifford-Preston-1}).

\begin{thrm}\label{glob-A(GSem)-D_x-ideal}
Let $(\0,\cS)\in\pA(G,\sem)$, such that $D_x$ is an ideal of $S$ for all $x\in G$. Then $(\0,\cS)$ has a globalization in $\A(G,\sem)$ if and only if 
\begin{align}\label{x(x-inv(su)t)=sx(x-inv(u)t)}
 x(x\m(su)t)=sx(x\m(u)t) 
\end{align}
for all $x\in G$, $u\in D_x$ and $s,t\in S$.
\end{thrm}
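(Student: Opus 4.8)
The plan is to apply the general criterion from \cref{glob-pA(GV)} specialized to the variety $\sem$ of semigroups. By that theorem, $(\0,\cS)$ is globalizable in $\A(G,\sem)$ if and only if condition \cref{([xa][yb])-in-Th_Sg=>[xa]=[yb]} holds, i.\,e. whenever two one-letter words $[x,a]$ and $[y,b]$ in $W_\sem(S^U)$ are $\Th_\sem$-equivalent, they are already equal in $S^U$. So the task reduces to analyzing the congruence $\Th_\sem$ on $\cW_\sem(S^U)=\prod^*_{\cS_{x,y}=\cS_{y,x}}\cS_x$ (using \cref{free-amalg-prod-isom-refl}), and the whole difficulty lies in getting a usable description of when two generators become identified. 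This is exactly the sort of situation where the ideal structure of the $D_x$ should let us apply a reduction-system (rewriting) technique: the word problem for the free product with amalgamation becomes tractable because the amalgamated subsemigroups $\cD_{x\m y}$ are ideals, so products can be systematically pushed into a normal form.

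First I would set up the reduction system. Each element of $\prod^*_{\cS_{x,y}=\cS_{y,x}}\cS_x$ is represented by a word $s^{(1)}_{x_1}\cdots s^{(n)}_{x_n}$ in the copies $S_{x_i}$ of $S$. The defining relations are the multiplication relations \cref{gen-mult-in-free-prod} inside each copy $\cS_x$ and the amalgamation relations \cref{gen-amalg-prod} identifying $a\in D_{x\m y}$ (viewed in $S_x$) with $(y\m x)a$ (viewed in $S_y$). The key observation enabling rewriting is that because $D_x$ is an \emph{ideal}, a letter from copy $S_x$ that lands in the ideal $D_{x\m y}$ can absorb an adjacent letter: concretely, if we multiply and the result lies in a domain, the amalgamation relation lets us transfer it to another copy. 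I would orient these as rewriting rules that shorten or canonicalize words — for instance, collapsing adjacent letters in the same copy via multiplication, and moving a letter across copies when it enters an amalgamated ideal — and then verify local confluence (checking the overlap/critical pairs) together with termination, so that Newman's lemma gives confluence and hence unique normal forms.

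The main obstacle will be establishing confluence of this reduction system, specifically analyzing the critical pairs that arise when an element can be rewritten in two different ways. The genuinely substantive critical pair is the one that forces condition \cref{x(x-inv(su)t)=sx(x-inv(u)t)}: consider a product $s\cdot u\cdot t$ where $u\in D_x$ (so $u$ lives in the ideal that can be transferred to copy $S_{x}$ via $\0_x$) and $s,t\in S$. Depending on the order in which we apply multiplication within a copy versus the amalgamation transfer, we get either $x(x\m(su)t)$ or $s\,x(x\m(u)t)$ as competing reduced forms, and these coincide for all inputs precisely when \cref{x(x-inv(su)t)=sx(x-inv(u)t)} holds. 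Thus \cref{x(x-inv(su)t)=sx(x-inv(u)t)} is exactly the condition making the critical pair resolve, i.\,e. making the system confluent.

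Finally, I would close the argument in both directions. For the ``if'' direction, assuming \cref{x(x-inv(su)t)=sx(x-inv(u)t)}, confluence yields well-defined normal forms in which distinct generators $[x,a]\ne[y,b]$ remain distinct, so \cref{([xa][yb])-in-Th_Sg=>[xa]=[yb]} holds and \cref{glob-pA(GV)} gives the globalization. For the ``only if'' direction, I would argue contrapositively: if \cref{x(x-inv(su)t)=sx(x-inv(u)t)} fails for some $x,u,s,t$, then the two reductions of $s\,u\,t$ produce distinct generators that are nevertheless $\Th_\sem$-equivalent — furnishing a violation of \cref{([xa][yb])-in-Th_Sg=>[xa]=[yb]} — so by \cref{glob-pA(GV)} no globalization exists. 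The bulk of the technical work is the confluence verification and the careful bookkeeping showing that the single nontrivial critical pair is the only obstruction; once that is in place the equivalence with \cref{x(x-inv(su)t)=sx(x-inv(u)t)} is immediate.
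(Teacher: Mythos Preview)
Your approach is essentially the paper's: reduce to \cref{glob-pA(GV)}, set up a terminating rewriting system on words, and show that weak confluence comes down to a single critical pair resolved exactly by \cref{x(x-inv(su)t)=sx(x-inv(u)t)}. The paper streamlines this in two small ways worth noting: it works directly over the alphabet $S^U$ (so amalgamation is already absorbed into the letters $[x,s]$ and the only rewrite rule is $w[x,s][x,t]w'\to w[x,st]w'$, leaving just the one overlap $[x,s][x,u][y,t]$ with $[x,u]=[y,v]$ to check), and for the ``only if'' direction it simply observes that \cref{x(x-inv(su)t)=sx(x-inv(u)t)} holds trivially in any global action and hence in $\cS$, rather than going through the contrapositive.
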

\begin{proof}
The ``only if'' part is obvious: if $(\vt,\cT)$ is a globalization of $(\0,\cS)$, then \cref{x(x-inv(su)t)=sx(x-inv(u)t)} trivially holds for $(\vt,\cT)$, and hence, in particular, for $(\0,\cS)$.

For the ``if'' part observe that $\Th_\sem$ is generated by the following abstract reduction system~\cite[Definition 1.1.1]{Terese}:
$$
(\cW_\sem(S^U),\{w[x,s][x,t]w'\to w[x,st]w'\mid w,w'\in W_\sem(S^U),\ x\in G,\ s,t\in S\}).
$$
Moreover, each one-letter word $[x,s]$ is a normal form~\cite[Definition 1.1.13 (i)]{Terese}. So, in view of \cref{glob-pA(GV)} the pair $(\0,\cS)$ is globalizable if and only if $\to$ has the unique normal form property~\cite[Definition 1.1.13 (v)]{Terese}.

Since $l(w[x,st]w')<l(w[x,s][x,t]w')$, the relation $\to$ is strongly normalizing~\cite[Definition 1.1.13 (iii)]{Terese}, so by Theorem~1.2.1 and (i) of Theorem~1.2.2 from~\cite{Terese} it suffices to prove that $\to$ is weakly confluent~\cite[Definition 1.1.8 (iii)]{Terese}. It is clearly enough to consider the reductions of a word of the form $[x,s][x,u][y,t]$, where $[x,u]=[y,v]$, that is
\begin{align*}
[x,s][x,u][y,t]&\to[x,su][y,t],\\
[x,s][x,u][y,t]&\to[x,s][y,vt].
\end{align*}
Note that $u$ belongs to the ideal $D_{x\m y}$ and therefore
\begin{align}\label{[xsu][yt]}
[x,su][y,t]=[y,(y\m x)(su)][y,t]\to[y,(y\m x)(su)t].
\end{align}
Similarly it follows from $v=(y\m x)u\in D_{y\m x}$ that
\begin{align}\label{[xs][yvt]}
[x,s][y,vt]=[x,s][x,(x\m y)((y\m x)(u)t)]\to[x,s(x\m y)((y\m x)(u)t)].
\end{align}
By \cref{x(x-inv(su)t)=sx(x-inv(u)t)} the reductions \cref{[xsu][yt],[xs][yvt]} give the same word.
\end{proof}


\begin{rem}\label{GrilletPetrich}
 If $|G|=2$, then, in view of \cref{refl-glob<=>amalg-emb}, the result of \cref{glob-A(GSem)-D_x-ideal} is a particular case of~\cite[Theorem 1]{Grillet-Petrich70}.
\end{rem}

\begin{rem}\label{exm-of-DE}
 Observe that in \cref{non-inj-refl-morph} the domain $D_x=\{0,u,v\}$ is an ideal of $\cS$, but $x(x\m(tu)t)=0\ne u=tx(x\m(u)t)$.
\end{rem}


\begin{cor}\label{D_x-idemp-or-weakly-red}
 Under the conditions of \cref{glob-A(GSem)-D_x-ideal} each one of the following assumptions is sufficient for $(\0,\cS)$ to be globalizable in $\A(G,\sem)$:
 \begin{enumerate}
  \item $\cD_x$ is idempotent for all $x\in G\setminus\{1\}$;\label{D_x-idemp}
  \item $\cD_x$ is weakly reductive for all $x\in G\setminus\{1\}$.\label{D_x-weakly-red}
 \end{enumerate}
 In particular, \cref{D_x-idemp} (as well as \cref{D_x-weakly-red}) is true, when $\cS$ is inverse or each $\cD_x$ is unital.
\end{cor}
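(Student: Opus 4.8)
The plan is to verify, under each of the two hypotheses, the criterion \cref{x(x-inv(su)t)=sx(x-inv(u)t)} of \cref{glob-A(GSem)-D_x-ideal}, which is equivalent to globalizability. Throughout I write $xa$ for $\0(x,a)$ and use that $\0_x$ is a semigroup isomorphism $\cD_{x\m}\to\cD_x$, so that $x(pq)=(xp)(xq)$ and $x\m(pq)=(x\m p)(x\m q)$ whenever the relevant products lie in $D_x$, resp. $D_{x\m}$; since each $D_x$ is an ideal, any product of an element of a domain by an element of $S$ stays in that domain, so all expressions below are defined. For $x=1$ the identity \cref{x(x-inv(su)t)=sx(x-inv(u)t)} reduces to associativity and holds trivially, so I treat only $x\ne 1$, where the assumption on $\cD_x$ is available. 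Note first that both $x(x\m(su)t)$ and $sx(x\m(u)t)$ lie in $D_x$: the former as a value of $\0_x$, the latter because $D_x$ is a left ideal.

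For \cref{D_x-idemp}, idempotency ($\cD_x^2=\cD_x$) lets me write $u=vw$ with $v,w\in D_x$. First I would reduce $x\m(su)=x\m((sv)w)=x\m(sv)\,x\m(w)$ (using $sv,w\in D_x$), so that
\[
 x(x\m(su)t)=x\bigl(x\m(sv)\cdot x\m(w)t\bigr)=(sv)\,x(x\m(w)t).
\]
In the same way $x\m(u)=x\m(v)\,x\m(w)$ gives $x(x\m(u)t)=v\,x(x\m(w)t)$, whence $sx(x\m(u)t)=(sv)\,x(x\m(w)t)$, and the two sides coincide.

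For \cref{D_x-weakly-red}, I would instead show that left and right multiplication by an arbitrary $c\in D_x$ sends both sides of \cref{x(x-inv(su)t)=sx(x-inv(u)t)} to the same element, and then invoke weak reductivity of $\cD_x$. Writing $c=x(d)$ with $d=x\m c\in D_{x\m}$, a short computation using the homomorphism property of $\0_x,\0_{x\m}$ and the ideal property gives, on the left,
\[
 c\cdot x(x\m(su)t)=x(x\m(csu)t)=c\cdot sx(x\m(u)t),
\]
and, on the right,
\[
 x(x\m(su)t)\cdot c=(su)\,x(td)=sx(x\m(u)t)\cdot c.
\]
Since these hold for every $c\in D_x$ and both sides lie in $D_x$, weak reductivity of $\cD_x$ forces $x(x\m(su)t)=sx(x\m(u)t)$.

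I do not anticipate a serious obstacle: once the problem is reduced to the criterion of \cref{glob-A(GSem)-D_x-ideal}, each case is a direct computation, and the only point requiring care is the bookkeeping that every intermediate product remains inside the relevant domain, so that $\0_x$ and $\0_{x\m}$ may be applied and treated as homomorphisms. For the final assertions it suffices to observe that both hypotheses hold in the stated situations: if $\cD_x$ is unital with identity $e$, then $u=eu$ shows $\cD_x$ is idempotent while $e$ witnesses weak reductivity; if $\cS$ is inverse, then $\cD_x$ is an inverse subsemigroup and each $a\in D_x$ satisfies $a=a(a\m a)$ with $a\m a\in D_x$ (as $D_x$ is an ideal and $a\in D_x$), so $\cD_x$ is idempotent, and moreover inverse semigroups are weakly reductive --- from $ca=cb$ and $ac=bc$ for all $c$ one derives $a=b(a\m a)$ and $b=a(b\m b)$, and commutativity of idempotents then gives $a=b$.
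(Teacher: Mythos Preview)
Your proposal is correct and follows essentially the same route as the paper: reduce to the criterion of \cref{glob-A(GSem)-D_x-ideal}, then in case \ref{D_x-idemp} factor $u=vw$ in $D_x$ and split the computation, and in case \ref{D_x-weakly-red} multiply both sides on the left and right by an arbitrary element of $D_x$ and invoke weak reductivity. The only difference is cosmetic (your $v,w,c,d$ are the paper's $u',u'',v',v''$), and you add the verification of the ``in particular'' clause, which the paper leaves to the reader.
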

\noindent Indeed, it is enough to check \cref{x(x-inv(su)t)=sx(x-inv(u)t)} for $x\ne 1$. If $u',u''\in D_{x\m}$, then $x(x\m(su'u'')t)=su'x(x\m(u'')t)=sx(x\m(u'u'')t)$, so \cref{D_x-idemp} implies \cref{x(x-inv(su)t)=sx(x-inv(u)t)}. Assuming now \cref{D_x-weakly-red} and taking $v',v''\in D_x$, we obtain \cref{x(x-inv(su)t)=sx(x-inv(u)t)} as a consequence of $v'x(x\m(su)t)=x(x\m(v'su)t)=v'sx(x\m(u)t)$ and $x(x\m(su)t)v''=sux(tx\m(v''))=sx(x\m(u)t)v''$.\\


For partial actions whose domains are unital ideals a stronger result holds.

\begin{thrm}\label{D_x-unital}
Suppose that $\cD_x$ is unital, that is $D_x=1_xS$ for some central idempotent $1_x\in S$. Set $[x,s]*[y,t]=[x,s(x\m y)(1_{y\m x}t)]$. Then $*$ is a well-defined associative operation on $S^U$. Moreover, $(\0^U,(S^U,*))$ is a (non-universal) globalization of $(\0,\cS)$ in $\A(G,\sem)$ and $[1,S]$ is a (unital) ideal of $(S^U,*)$. 
\end{thrm}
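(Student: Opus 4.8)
The plan is to establish everything about $(S^U,*)$ by direct verification, resting on a short toolkit of facts about the unital domains. First I record structural preliminaries. Since $D_1=S=1_1S$ with $1_1$ a central idempotent, $1_1$ is a two-sided identity of $S$, so $S$ is a monoid; more generally each $D_g=1_gS=S1_g$ is a unital ideal with identity the central idempotent $1_g$, and $D_g\cap D_h=1_g1_hS$ is unital with identity $1_g1_h$. Throughout I will use three mechanisms: (a) each $\0_g=\0(g,-)$ is a semigroup isomorphism $D_{g\m}\to D_g$, so it preserves products of elements of $D_{g\m}$ and maps the identity $1_{g\m}1_{g\m h}$ of $D_{g\m}\cap D_{g\m h}$ to the identity $1_g1_{gh}$ of $\0_g(D_{g\m}\cap D_{g\m h})=D_g\cap D_{gh}$ (by property (ii) of~\cite[Definition 1.2]{E1}); (b) the composition law $\0_x\circ\0_y=\0_{xy}$ on $D_{y\m}\cap D_{y\m x\m}$; and (c) the ``pull-in'' identity $\0_g(c)\,b=\0_g\big(c\,\0_{g\m}(1_gb)\big)$ for $c\in D_{g\m}$ and $b\in S$, which holds since $1_gb\in D_g$, $\0_g(\0_{g\m}(1_gb))=1_gb$ and $\0_g(c)=\0_g(c)1_g$.

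Next I would prove $*$ well defined by changing one representative at a time. Because the first coordinate of $[x,s]*[y,t]$ is inherited from the left factor, changing the representative of $[y,t]$ keeps the first coordinate $x$ and reduces to the equality $\0_{x\m y}(1_{y\m x}t)=\0_{x\m y'}(1_{y'\m x}t')$ in $S$, which I unwind with (a)--(c) and injectivity of the $\0_g$ into the clean identity $1_h1_{g\m}t=1_{g\m}t$ (valid since $t\in D_h$). Changing the representative of $[x,s]$, say $[x,s]=[x',s']$ with $x'=xk$, $s'=\0_{k\m}(s)$ and $s\in D_k$, instead moves the first coordinate, so I must verify $\0_{k\m}(P)=P'$ for the two candidate products $P,P'$. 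Factoring $P=s\cdot(1_kw)$ with both factors in $D_k$, applying $\0_{k\m}$ as a homomorphism, and collapsing the nested $\0$'s via (b) reduces everything to the same expression up to a stray central idempotent $1_{k\m}$, which is absorbed because $s'\in D_{k\m}$ satisfies $s'1_{k\m}=s'$.

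I expect associativity to be the main obstacle, being the longest computation, though it follows the same template. Both $([x,s]*[y,t])*[z,r]$ and $[x,s]*([y,t]*[z,r])$ carry first coordinate $x$, so after left-multiplying away the common factor $s$ and writing $g=x\m y$, $h=x\m z$ it suffices to prove
\[
 \0_g(1_{g\m}t)\,\0_h(1_{h\m}r)=\0_g\big(1_{g\m}\,t\,\0_{g\m h}(1_{h\m g}r)\big)
\]
in $S$. I apply the pull-in identity (c) to move $\0_h(1_{h\m}r)$ inside $\0_g$, rewrite $1_g\0_h(1_{h\m}r)$ as $\0_h(1_{h\m}1_{h\m g}r)$ using (a), collapse $\0_{g\m}\circ\0_h$ to $\0_{g\m h}$ by (b), and finally absorb the surplus idempotent $1_{g\m}$ into the adjacent factor $1_{g\m}t$. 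The crux in each step is the identity-preservation property in (a), which converts the set-theoretic intersection identities of~\cite[Definition 1.2]{E1}(ii) into algebraic identities among the $1_g$.

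The remaining assertions are comparatively immediate. That $\0^U$ acts by automorphisms, hence $(\0^U,(S^U,*))\in\A(G,\sem)$, is clean: $g$ merely shifts first coordinates while the product formula depends on them only through $x\m y$ and $y\m x$, so $g([x,s]*[y,t])=[gx,\,s\,\0_{x\m y}(1_{y\m x}t)]=(g[x,s])*(g[y,t])$. The map $[1,-]$ is an injective homomorphism since $[1,s]*[1,t]=[1,s\,\0_1(1_1t)]=[1,st]$, and it is a morphism of partial actions satisfying \cref{xa-ne-emp-iff-xiota(a)-in-iota(M)} because $x[1,a]=[x,a]$ lies in $[1,S]$ exactly when $xa\ne\emp$; thus it is a globalization. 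Finally $[1,S]$ is an ideal, as $[x,s]*[1,t]=[x,s\,\0_{x\m}(1_xt)]=[1,\0_x(s\,\0_{x\m}(1_xt))]$ (the argument lying in $D_{x\m}$) and $[1,t]*[x,s]=[1,t\,\0_x(1_{x\m}s)]$ are both in $[1,S]$, and it is unital with identity $[1,1_1]$ since $[1,1_1]*[1,s]=[1,s]=[1,s]*[1,1_1]$. I would note that universality is not claimed here, so nothing further is needed.
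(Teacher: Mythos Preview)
Your proof is correct and follows essentially the same route as the paper: both rest on the key identity $\0_g(1_{g\m}1_y)=1_g1_{gy}$ (your (a), the paper's display (5.6)) and then verify well-definedness, associativity, and the remaining claims by direct computation; your explicit ``pull-in'' lemma (c) and your one-representative-at-a-time strategy are minor reorganizations of the same manipulations the paper carries out. One slip to fix: in (a) you wrote $\0_g(D_{g\m}\cap D_{g\m h})=D_g\cap D_{gh}$ with identity $1_g1_{gh}$, but by property~(ii) of~\cite[Definition~1.2]{E1} the image is $D_g\cap D_h$ with identity $1_g1_h$ --- you in fact use this correct form in the associativity step when rewriting $1_g\0_h(1_{h\m}r)=\0_h(1_{h\m}1_{h\m g}r)$, so the error is only in the statement of (a), not in the argument itself.
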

\begin{proof}
First of all notice that
\begin{align}\label{0_x(D_(x^(-1))D_y)}
 x(1_{x\m}1_y)=1_x1_{xy}
\end{align}
for all $x,y\in G$. Indeed, this follows from (ii) of~\cite[Definition 1.2]{E1} and the fact that $D_x\cap D_y=D_xD_y=1_x1_yS$ for arbitrary $x,y\in G$.

We now prove that $*$ is a well-defined. Suppose that 
\begin{align}
[x,s]&=[z,u],\label{[xs]=[zu]}\\
[y,t]&=[w,v].\label{[yt]=[wv]}
\end{align}
We need to show that
\begin{align}\label{[xs]*[yt]=[zu]*[wv]}
[x,s(x\m y)(1_{y\m x}t)]=[z,u(z\m w)(1_{w\m z}v)].
\end{align}
It follows from \cref{[xs]=[zu]} that
\begin{align}\label{(z^(-1)x)(s(x^(-1)y)(1_(y^(-1)x)t))}
 (z\m x)(s(x\m y)(1_{y\m x}t))=u(z\m x)(1_{x\m z}(x\m y)(1_{y\m x}t)).
\end{align}
Observe using \cref{0_x(D_(x^(-1))D_y)} that
\begin{align}
 (z\m x)(1_{x\m z}(x\m y)(1_{y\m x}t))&=(z\m x)(1_{x\m z}1_{x\m y}(x\m y)(1_{y\m x}t))\notag\\
&=(z\m x)((x\m y)(1_{y\m z}1_{y\m x}t))\notag\\
&=(z\m y)(1_{y\m z}1_{y\m x}t).\label{(z^(-1)x)(1_(x^(-1)z)(x^(-1)y)(1_(y^(-1)x)t))}
\end{align}
Using \cref{[yt]=[wv],0_x(D_(x^(-1))D_y)}, we see that
\begin{align}
(z\m y)(1_{y\m z}1_{y\m x}t)=(z\m w)((w\m y)(1_{y\m z}1_{y\m x}t))=(z\m w)(1_{w\m z}1_{w\m x}v).\label{(z^(-1)y)(1_(y^(-1)z)1_(y^(-1)x)t)}
\end{align}
Thus, by \cref{[xs]=[zu],(z^(-1)x)(s(x^(-1)y)(1_(y^(-1)x)t)),(z^(-1)x)(1_(x^(-1)z)(x^(-1)y)(1_(y^(-1)x)t)),(z^(-1)y)(1_(y^(-1)z)1_(y^(-1)x)t),0_x(D_(x^(-1))D_y)}
\begin{align*}
 (z\m x)(s(x\m y)(1_{y\m x}t))&=u(z\m w)(1_{w\m z}1_{w\m x}v)\\
&=u1_{z\m x}(z\m w)(1_{w\m z}v)\\
&=u(z\m w)(1_{w\m z}v),
\end{align*}
proving \cref{[xs]*[yt]=[zu]*[wv]}.

For the associativity of $*$ take $[x,s],[y,t],[z,u]\in S^U$ and observe that
\begin{align}
([x,s]*[y,t])*[z,u]&=[x,s(x\m y)(1_{y\m x}t)]*[z,u]\notag\\
&=[x,s(x\m y)(1_{y\m x}t)(x\m z)(1_{z\m x}u)],\label{([xs]*[yt])*[zu]}\\
[x,s]*([y,t]*[z,u])&=[x,s]*[y,t(y\m z)(1_{z\m y}u)]\notag\\
&=[x,s(x\m y)(1_{y\m x}t(y\m z)(1_{z\m y}u))].\label{[xs]*([yt]*[zu])}
\end{align}
To prove that \cref{([xs]*[yt])*[zu]} coincides with \cref{[xs]*([yt]*[zu])}, it suffices to show that
$$
1_{x\m y}(x\m z)(1_{z\m x}u)=(x\m y)(1_{y\m x}(y\m z)(1_{z\m y}u)),
$$
the latter being explained by \cref{(z^(-1)x)(1_(x^(-1)z)(x^(-1)y)(1_(y^(-1)x)t))}. 

Clearly, $[1,s]*[1,t]=[1,st]$, so the map $[1,-]$ is a monomorphism $\cS\to(S^U,*)$. Moreover, $[1,s]*[x,t]=[1,sx(1_{x\m}t)]$ and $[x,t]*[1,s]=[x,tx\m(1_xs)]=[1,x(tx\m(1_xs))]$, so $[1,S]$ is an ideal of $(S^U,*)$.

We finally prove that $(\0^U,(S^U,*))$ is a globalization of $(\0,\cS)$ in $\A(G,\sem)$. It is enough to show that $\0^U$ respects $*$. For $[x,s],[y,t]\in S^U$ and $z\in G$ one has
\begin{align*}
z([x,s]*[y,t])&=z[x,s(x\m y)(1_{y\m x}t)]=[zx,s(x\m y)(1_{y\m x}t)]\\
&=[zx,s(x\m z\m\cdot zy)(1_{(zy)\m\cdot zx}t)]\\
&=[zx,s]*[zy,t]=(z[x,s])*(z[y,t]).
\end{align*}
\end{proof}

\begin{rem}\label{S^U-inverse}
 Under the conditions of \cref{D_x-unital} if $\cS$ is inverse, then $(S^U,*)$ is inverse.
\end{rem}
\noindent Indeed, observe that $[x,s]*[x,t]=[x,st]$. Therefore, $[x,s]*[x,s\m]*[x,s]=[x,s]$, so $(S^U,*)$ is regular. Moreover, each idempotent of $(S^U,*)$ is of the form $[x,e]$, where $e\in E(S)$, and hence the idempotents commute.
%
%
%

\bibliography{bibl-pact}{}
\bibliographystyle{acm}

\end{document}